\newtheorem{theorem}{Theorem}
\newtheorem*{theorem*}{Theorem}
\newtheorem{corollary}[theorem]{Corollary}
\newtheorem{lemma}[theorem]{Lemma}
\newtheorem{proposition}[theorem]{Proposition}
\newtheorem{definition}[theorem]{Definition}
\newtheorem{notation}[theorem]{Notation}
\newtheorem{remark}[theorem]{Remark}
\newtheorem{example}[theorem]{Example}
\def\N{\mathbb{N}}
\def\cD{\mathcal{D}}
\def\cZ{\mathcal{Z}}
\def\cM{\mathcal{M}}
\newcommand{\house}[1]{\widehat{#1}}
\newcounter{tmpabcd}
\newcounter{tmpnum}
\newcounter{tmprome}
\newcommand{\ep}{$\blacksquare$}
\newcommand{\mcc}{\mathbb{C}}
\newcommand{\mpp}{\mathbb{P}}
\newcommand{\mnn}{\mathbb{N}}
\newcommand{\mrr}{\mathbb{R}}
\newcommand{\mzz}{\mathbb{Z}}
\newcommand{\mqq}{\mathbb{Q}}
\newcommand{\eqdef}{\ensuremath{\stackrel{\mathrm{def}}{=}}}
\newcommand{\Dist}{\ensuremath{{\rm Dist}}}
\renewcommand{\b}[1]{{{#1}}}
\newcommand{\hidden}[1]{}
\newcommand{\Q}{\mathbb{Q}}
\newcommand{\R}{\mathbb{R}}
\newcommand{\C}{\mathbb{C}}
\newcommand{\ee}{{\rm e}}
\newcommand{\V}{\ensuremath{\mathcal{V}}}
\newcommand{\A}{\ensuremath{\mathcal{A}}}
\newcommand{\AnneauDePolynomes}{\A}
\newcommand{\Dc}{\mathcal{D}}
\newcommand{\ul}[1]{\underline{#1}}
\newcommand{\ull}[1]{\underline{\b{#1}}}
\newcommand{\ol}[1]{\overline{#1}}
\newcommand{\ord}{\ensuremath{{\rm ord}}}
\newcommand{\ordz}{\ensuremath{{\rm ord_{\b{z}=0}}}}
\newcommand{\trdeg}{\ensuremath{{\rm tr.deg.}}}
\newcommand{\card}{{\rm card\;}}
\begin{document}


\title{Algebraic independence and normality of the values of Mahler functions\thanks{The work on this project has been partially supported by EPSRC  Grant EP/M021858/1.}}

\author{Evgeniy Zorin\footnote{E. Zorin, University of York,   Department of Mathematics, York, YO10 5DD, United Kingdom, \href{mailto:evgeniy.zorin@york.ac.uk}{evgeniy.zorin@york.ac.uk}}}

\hidden{
\footnote{E. Zorin,
              University of York,   Department of Mathematics, York, YO10 5DD,  United Kingdom,
              Tel.: +44 1904 32 3080,
              \href{mailto:evgeniy.zorin@york.ac.uk}{evgeniy.zorin@york.ac.uk}
}
}


\maketitle

\begin{abstract}
The main purpose of this article is to provide new results on algebraic independence of values of Mahler functions and their generalizations. At the same time, we  establish new measures of algebraic independence for these values. In particular, we provide a measure of algebraic independence for values of Mahler functions at complex transcendental points. 

As an example of application of our new measures of algebraic independence, we are able to infer that a Mahler number does not belong to the class $U$ in Mahler's classification. Also, our results imply new examples, for $n\geq 1$ arbitrarily large, of sets $\left(\theta_1,\dots,\theta_n\right)\in\mrr^n$ normal in the sense of G.~Chudnovsky (1980).
\end{abstract}


\section{Introduction}


In this paper we establish a variety of new measures of algebraic independence of Mahler numbers, and at the same time we largely expand the limits of Mahler's method by treating the functional systems of the form
\hidden{
we consider systems of functions
\begin{equation} \label{intro_f}
f_1(\b{z}),...,f_n(\b{z})\in\ol{\mqq}[[\b{z}]]
\end{equation}
analytic in some neighbourhood $U$ of $0$ 
and satisfying the following system of functional equations:
}
\begin{equation} \label{systeme_1}
    a(\b{z})\ull{f}(\b{z})=A(\b{z})\ull{f}(p(\b{z}))+B(\b{z}),
\end{equation}
 where $p(\b{z})=p_1(\b{z})/p_2(\b{z})$ is a rational function with coefficients in $\ol{\mqq}$ and with the order of vanishing at $0$ at least 2, that is $\ordz p:=\ordz p_1-\ordz p_2\geq 2$. Also, $\ull{f}(\b{z})=(f_1(\b{z}),\dots,f_n(\b{z}))$ denotes an $n$-tuple of functions analytic in some neighbourhood $U$ of $0$ and having algebraic coefficients, $a(\b{z})\in\ol{\mqq}[\b{z}]$, $A$ (resp. $B$) is an $n\times n$ (resp. $n\times 1$) matrix with coefficients in $\ol{\mqq}[\b{z}]$. We assume in all this article that $\det A(\b{z})$ is a non-zero polynomial.

\hidden{
It is common to call \emph{Mahler functions} analytic functions 
\begin{equation} \label{intro_f}
f_1(\b{z}),...,f_n(\b{z})\in\ol{\mqq}[[\b{z}]]
\end{equation}
satisfying functional system~\eqref{systeme_1} with $p(z)=z^d$.
}
The classical case of \emph{Mahler functions} appear by setting $p(z)=z^d$ in the system~\eqref{systeme_1}.
The name is after Kurt Mahler, who initiated their study introducing~\emph{Mahler's method}~\cite{Mah1929,Mah1930,Mah1930_2}.
\hidden{
Algebraic independence of values of these functions also was studied by Amou, Becker, Kubota, Loxton, Masser, Nishioka, van der Poorten, T\"opfer and many others~\cite{A1991,B1994,Kubota1977,LvdP1982,LvdP1988,Ni1986,Ni1996,Pellarin2010,ThTopfer1995}.
}
\emph{Mahler numbers} are the numbers that can be presented as $f_1(\alpha)$, where $0<|\alpha|<1$ is an algebraic number and $f_1(z)$ is a Mahler function.

More generally, we refer to the solutions
\begin{equation} \label{intro_f}
f_1(\b{z}),...,f_n(\b{z})\in\ol{\mqq}[[\b{z}]]
\end{equation}
of system~\eqref{systeme_1} where $p(z)$ is not necessarily of the form $z^d$ as \emph{generalized Mahler functions}, and their values at algebraic points within their domain of convergence as \emph{generalized Mahler numbers}.

Transcendence and algebraic independence of Mahler numbers attracted a lot of interest and were intensively studied, among the others by Amou, Becker, Bell, Bugeaud, Coons, Kubota, Loxton, Masser, Nishioka, van der Poorten, T\"opfer and many others~\cite{A1991,B1994,BBC2013,Kubota1977,LvdP1982,LvdP1988,Ni1986,Ni1996,Pellarin2010,ThTopfer1995}.

At the same time, the case of generalized Mahler numbers did not appeared previously in the literature, at least as far as the author is aware of.

The interest in Mahler functions and Mahler numbers is manifold. First of all, this is a very important branch of study of algebraic independence theory.
Also, it has direct applications to theoretical computer science. For example, the set of Mahler numbers contains as a proper subset the set of \emph{automatic numbers} \cite{Cobham1968}, the numbers of the form $\sum_{n=0}^{\infty}a_n b^n$, $b\in\mzz_{\geq 2}$ where the sequence of digits $(a_n)_{n\in\mnn}$, $a_n\in\{0,\dots, b-1\}$, $n\in\mnn$ can be generated by a \emph{finite automaton} (the simplest class of Turing machines) which receives to the entry the $b$-adic expansion of the number $n$. We refer the reader to the book~\cite{AS2003} for much more detailed discussion.

\hidden{
For instance, when Hartmanis and Stearns come up with their famous (and still unsolved) conjecture that no algebraic irrational numbers are \emph{computable in real time}~\cite{HS1965}, and than Cobham~\cite{Cobham1968} proposed, as a first step towards the Hartmanis-Stearns conjecture, the restriction of this problem to the class of automatic numbers, Mahler's method was considered as the most natural tool to attack this problem. After a number of important contributions~\cite{LvdP1982,LvdP1988,Ni1996} this direction of research has been eventually accomplished recently by the result of Philippon~\cite{PP2015}. It has to be said here that Cobham's conjecture (i.e. conjecture of Hartmanis and Stearns for automatic numbers) was first settled up in 2005 by Adamczewski and Bugeaud~\cite{AB2007} by using combinatorial techniques and Schmidt's subspace theorem. Moreover, very recently Adamczewski, Cassagne and Le Gonidec extended this technique further and made the next step towards the complete solution of Hartmanis-Stearns conjecture, by proving that no algebraic irrational number can be generated by a deterministic automaton with one stack~\cite{ACG2016}. 
}
\hidden{
It is well known~\cite{AS2003} that if a functional solution $\ul{f}=(f_1(z),\dots,f_n(z))$ to the system~\eqref{systeme_1} specializes to an automatic number (i.e. if $f_1(1/b)$, $b\in\mzz_{\geq 2}$, is an automatic number), then typically the $n$-tuple of functions $\ul{f}$ contains algebraically dependent functions.
}
Theorems (see e.g.~\cite{Cobham1968}) which provide a system of Mahler's functions~\eqref{intro_f} that specialize to an automatic number (i.e. $f_1(1/b)$, $b\in\mzz_{\geq 2}$, is an automatic number) always give a system of algebraically dependent functions. So if we keep in mind applications to automatic numbers, it is important to provide algebraic independence results for the values of solutions of~\eqref{systeme_1} in absence of hypothesis on algebraic independence of the functions~\eqref{intro_f} themselves. In this our article we focus on the measure of algebraic independence of values of generalized Mahler functions which may be algebraically dependent (this is in contrast to all the previous results on generalized Mahler functions~\cite{ThTopfer1995,EZ2010,EZ2013_2}). 

Given a solution~\eqref{intro_f} to the system~\eqref{systeme_1}, we denote by $t=t(\ul{f})$ the transcendence degree\footnote{Note that in most of statements of this article we deal with a fixed set of functions~\eqref{intro_f}, so, to simplify the notation, we most often use the notation $t$ rather than $t(\ul{f})$.}
\begin{equation} \label{intro_trdeg}
t:=\trdeg_{\mcc(z)}\mcc(f_1(z),\dots,f_n(z)),
\end{equation}
that is $t$ is the maximal number of functions among $f_1(z),\dots,f_n(z)$ which are algebraically independent over $\mcc(z)$. Up to reindexing $f_i$, $i=1,\dots,n$, we can assume that $f_1(z),\dots,f_t(z)$ are algebraically independent (hence the functions $f_{t+1}(z),\dots,f_n(z)$ are algebraic over $\mcc\left(z,f_1(z),\dots,f_t(z)\right)$). In all this article we assume $t\geq 1$.

In this article we provide, for $\gamma\in\mcc$, lower bounds for the transcendence degree
\begin{equation} \label{intro_trdeg_numbers}
\trdeg_{\mqq}\mqq(\gamma,f_1(\gamma),\dots,f_n(\gamma)),
\end{equation}
that is we estimate from below the number of algebraically independent numbers among $\gamma,f_1(\gamma),\dots,f_n(\gamma)$. Moreover, we provide a geometric refinement of lower bounds for~\eqref{intro_trdeg_numbers}. Indeed, the fact that
$$
\trdeg_{\mqq}\mqq(\gamma,f_1(\gamma),\dots,f_n(\gamma))\geq k,
$$
$k\in\mnn$, means that the corresponding point in the projective space
\begin{equation} \label{intro_point}
(1:\gamma:f_1(\gamma):\dots:f_n(\gamma))\in\mpp^{n+1}
\end{equation}
does not belong to any subvariety of $\mpp^{n+1}$ defined over $\ol{\mqq}$ and of dimension $\leq k-1$. In this article, we prove that the transcendence degree~\eqref{intro_trdeg_numbers} is at least $k$ by establishing a strictly positive lower bound for the distance from the point~\eqref{intro_point} to any subvariety $W$ of $\mpp^{n+1}$ defined over $\ol{\mqq}$ of dimension $\leq k-1$. Here we understand the distance in the sense of the \emph{projective distance}, as defined in~\cite[Chapter~6, \S~5]{NP}. We refer the reader to this reference for the general definition and detailed discussion of the properties. 
For instance, if $W$ is a zero locus of a homogeneous polynomial $P$, then the projective distance from a point $x\in\mpp^{n+1}$ to $W\subset\mpp^{n+1}$ can be substituted by the \emph{normalized value} of the polynomial $P$ at $x$, that is $\frac{|P(x')|}{|P|\cdot|x'|^{\deg P}}$, where $x'$ stands for any representative of the projective point $x$, $|P(x')|$ denotes the absolute value of $P(x')$ (archimedean or not), $|P|$ denotes the maximum of absolute values of coefficients of $P$ and $|x'|$ is the maximum of absolute values of coordinates of $x'$. See also Remark~\ref{remak_Dist} below for some more discussion of the projective distance $\Dist(x,W)$.

Naturally, such a lower bound depends on the degree of $W$ and on its height. We refer the reader to~\cite[Chapters~5 and~7]{NP} for the definition of the height and of degree of a projective variety. Here we remark only that if a projective variety $W$ is defined over $\mqq$, has codimension 1 and has no embedded components, that is
if $W$ is a zero locus of a homogeneous polynomial $P$ with integer coefficients, then the degree of $W$ coincides with the degree of the polynomial $P$ and the height of $W$ is the logarithmic Weil's height of $P$. We recall that logarithmic Weil's height of a polynomial $P$ with rational coefficients is defined by
$$
\sum_{v\in\cM}\log\left|P\right|_v,
$$
where $\cM$ is the set of all absolute values of $\mqq$ and $\left|P\right|_v$ denotes the maximum of the valuation $v$ of the coefficients of $P$. We also recall that if coefficients of $P$ are integers, then the exponential of logarithmic Weil's height of $P$ is comparable, up to a multiplicative constant that depends on the degree of $P$ only, to the naive height $H_{naive}(P)$ of the polynomial $P$, that is the maximum of (the archimedean) absolute values of its coefficients.

When 
$t=n$, $\gamma\in\ol{\mqq}$, and we consider only projective varieties of codimension 1 without embedded components,  our lower bound specializes to the classical \emph{measure of algebraic independence} of the numbers
\begin{equation} \label{intro_numbers_gamma}
\gamma, f_1(\gamma),\dots,f_n(\gamma),
\end{equation}
i.e. it can be interpreted as a lower bound for the values of non-zero polynomials $Q$ in $n+1$ variables with integer coefficients:
$$
\left|Q(\gamma, f_1(\gamma),\dots,f_n(\gamma))\right|\geq \phi(\deg(Q),h(Q)),
$$
where $|\cdot|$ denotes the archimedean absolute value and $\phi:\mnn\times\mrr^+\rightarrow\mrr^{+}$ is a function called a \emph{measure of algebraic independence} of numbers~\eqref{intro_numbers_gamma}. 

To prove our results we use a general method developed in~\cite{PP_KF} (see also~\cite{PP1997}). 
This method requires a multiplicity estimate, and recently a new result of this kind for solutions of~\eqref{systeme_1} was established  in \cite[Theorem~3.11]{EZ2010} and \cite{EZ2013}, 
see Theorem~\ref{theoNishioka} below. We use this new multiplicity estimate together with the general method from~\cite{PP_KF} to
improve previously known results and establish new facts on algebraic independence and measures of algebraic independence. 


We have also found an interesting application of our results to Diophantine approximations to a single Mahler number. To explain this application below, we recall first some definitions.

Recall that \emph{Mahler number}  is a number which admits a presentation as $f_1(\alpha)$ for $\alpha\in\mqq$, $0<|\alpha|<1$, where $f_1(z)$ is a component of a functional solution to the system~\eqref{systeme_1} with $p(z)=z^d$.

\emph{Lioville number} is an irrational number which admits, in a sense, infinitely many very nice approximations by rational numbers. More precisely, we say that a number $x$ is a \emph{Liouville number} if for every $n\in\N$ we can find a rational fraction $\frac{p_n}{q_n}\in\mqq$ such that
$$
0<\left|x-\frac{p_n}{q_n}\right|<\frac{1}{q_n^n}.
$$
It follows from Liouville's theorem that all Liouville numbers are transcendental. A classical explicit example by Liouville of a transcendental number
$
\sum_{k=0}^{\infty}10^{-k!}
$
is a Liouville number, hence the name of this class.

If we consider approximations of reals not only by rationals, but, more generally, by algebraic numbers, this will lead us to the idea of \emph{Mahler's classification}. This classification was first introduced by K.~Mahler~\cite{Mah1932}, and later J.~F.~Koksma~\cite{Kok1939} gave an alternative interpretation of the same classification. 

To introduce \emph{Mahler's classification}, consider the quantity $w_m(\xi)$ defined to be the supremum of $w\in\R_+$ such that the following inequality has infinitely many solutions in polynomials $P(X)\in\mzz[X]$ of degree at most $m\in\N$:
$$
0<\left|P(\xi)\right|<H_{naive}(P)^{-w},
$$
where the $H_{naive}(P)$ denotes the naive height of $P$ (recall that the naive height of the polynomial $P\in\mzz[X]$ is the maximum of the archimedean absolute value of its coefficients).
Note that Liouville numbers defined above is precisely the class of numbers with $w_1(\xi)=\infty$.

At the next step, define
$$
w(\xi):=\limsup_{m\rightarrow\infty}\frac{w_m(\xi)}{m}.
$$
It is easy to verify that $w(\xi)\geq 2$.

Mahler's classification~\cite{Mah1932} splits all the real numbers $\xi\in\R$ into classes according to the value of $w(\xi)$:
\begin{itemize}

\item the class of $A$-numbers is defined by $w(\xi)=2$,

\item the class of $S$-numbers is defined by $2<w(\xi)<\infty$,

\item the class of $T$-numbers is defined by $w(\xi)=\infty$ and $w_m(\xi)<\infty$ for all $m\in\N$.

\item the class of $U$-numbers is defined by $w(\xi)=\infty$ and $w_m(\xi)=\infty$ for some $m\in\N$.

\end{itemize}

It is known that the class of $A$-numbers coincides with $\ol{\mqq}$ and that the compliment to the class $S$ has Lebesgue measure 0.

The application we are going to discuss relies on our Theorems~\ref{annexe_theo_ia1} and~\ref{annexe_theo_2}, presented below in Section~\ref{section_Results}.
In case if in the system~\eqref{systeme_1} we have $\ordz p(z)=\deg p(z)$, then our measures of algebraic independence 
given in these theorems
are optimal in $h(W)$.
This allows us to deduce the following statement (see Section~\ref{section_U_numbers}, Theorem~\ref{corollary_U_numbers} for even more detailed result).

\medskip

\noindent {\bf Theorem.} A Mahler number can not be a $U$ number.

\medskip


\hidden{
One of the sources of applications of Mahler numbers comes from the field of the computer science. Indeed, the set of Mahler numbers contains as a proper subset the set of \emph{automatic numbers}, the numbers of the form $\sum_{n=0}^{\infty}a_n b^n$, $b\in\mzz_{\geq 2}$ where the sequence of digits $(a_n)_{n\in\mnn}$, $a_n\in\{0,\dots, b-1\}$, $n\in\mnn$ can be generated by a \emph{finite automaton} (a simplest class of Turing machines) which receives to the entry the $b$-adic expansion of the number $n$. We refer the reader to the book~\cite{AS2003} for much more detailed discussion.
}

\hidden{
It has to be mentioned that if a functional solution $\ul{f}=(f_1(z),\dots,f_n(z))$ to the system~\eqref{systeme_1} specializes to an automatic number (i.e. if $f_1(1/b)$, $b\in\mzz_{\geq 2}$, is an automatic number), then typically the $n$-tuple of functions $\ul{f}$ contains algebraically dependent functions. So if we keep in mind applications to automatic numbers, it is important to provide the results for solutions of~\eqref{systeme_1} which are not necessarily algebraically independent.
}

\hidden{
So when Hartmanis and Stearns come up with their famous (and still unsolved) conjecture that no algebraic irrational numbers are \emph{computable in real time}~\cite{HS1965}, and than Cobham~\cite{Cobham1968} proposed, as a first step towards the Hartmanis-Stearns conjecture, the restriction of this problem to the class of automatic numbers, Mahler's method has become a very natural tool to attack this problem. After a number of very important contributions~\cite{LvdP1982,LvdP1988,Ni1996} this direction of research has been eventually accomplished recently by the result of Philippon~\cite{PP2015}. It has to be mentioned here that Cobham's conjecture (i.e. conjecture of Hartmanis and Stearns for automatic numbers) was settled up in 2005 by Adamczewski and Bugeaud~\cite{AB2007} by using combinatorial techniques and Schmidt's subspace theorem. Moreover, very recently Adamczewski, Cassagne and Le Gonidec extended this technique further and made the next step towards the complete solution of Hartmanis-Stearns conjecture, by proving that no algebraic irrational number can be generated by a deterministic automaton with one stack~\cite{ACG2016}.
}

The first result in this direction was established in~\cite{AC2006}, where it was proved that automatic numbers are not Liouville. 
Further, in \cite{AB2011} it was established that automatic numbers does not belong to the class $U$. Recently, these results has been improved~\cite{BBC2013} by showing that, under some conditions on $a,b\in\mzz$ (in particular, $b\geq 2$ and $\log|a|/\log b\in (0,1/3)$), the number $f_1(a/b)$ is not Liouville. Moreover, if $f_1(z)$ is a so called \emph{regular} series, than $f_1(a/b)$ does not belong to the class $U$ (the numbers of the form $f_1(1/b)$, $b\geq 2$, as described in this paragraph form a set that contains all the automatic numbers and itself is a proper subset of the set of Mahler numbers, see~\cite{AS2003}).


This our paper is organized as follows. We start by presenting our main results and their corollaries in Section~\ref{section_Results}. Section~\ref{section_Results} also contains, for illustrative purposes, a few of concrete examples on algebraic independence results that can be inferred from our general statements.

In Section~\ref{section_U_numbers} we deduce 
from our result on algebraic independence that Mahler numbers does not belong to the class $U$ in Mahler's classification.

In Section~\ref{section_cai} we provide a criterion for algebraic independence, which is a central tool in the proofs of our main results, Theorems~\ref{annexe_theo_ia1}, \ref{annexe_theo_ia2} and~\ref{annexe_theo_2}. This criterion, similarly to many other theorems of this kind, relies on the existence of a sequence of polynomials with nice approximation properties at the given point. Such polynomial sequences are constructed in Section~\ref{section_PS}. To this end we use a general  extrapolative construction from~\cite{PP_KF} (see Theorem~\ref{annexe_P7} below) presented in Section~\ref{section_K_functions}. To present this extrapolative construction, we need to remind the reader (a simplified version of) the notion of $K$-functions, which are needed to state Theorem~\ref{annexe_P7}. We do this in Section~\ref{section_K_functions} as well.

Finally, the proofs of our principal results, Theorems~\ref{annexe_theo_ia1}, \ref{annexe_theo_ia2} and~\ref{annexe_theo_2}, are given in Section~\ref{section_Proofs}.

\section{Notations and Results} \label{section_Results}

Throughout the text we use the following notation. For the rational fraction
$p(z)=p_1(z)/p_2(z)$, where $p_1(z),p_2(z)\in\ol{\mqq}[z]$ are coprime polynomials, we write
  $d := \deg p=\max\left(\deg p_1,\deg p_2\right)$, 
  $\delta := \ordz p = \ordz p_1 - \ordz p_2$.
For $h\in\mzz_{\geq 0}$, we will denote by $p^{[h]}(y)$ the $h$-th iterate of $p$ at a point $y\in\mcc$, i.e. we define recursively $p^{[0]}(y):=y$ and $p^{[h+1]}:=p(p^{[h]}(y))$ for all $h\in\mzz_{\geq 0}$.

Note that often we restrain our attention to the case of polynomial $p(z)\in\ol{\mqq}[z]$ (in this case, in the notation of the previous paragraph we have $p_2(z)=1$ and $p_1(z)=p(z)$). In Theorems~\ref{annexe_theo_ia1} and~\ref{annexe_theo_ia2} below we deal with the case of a polynomial $p(z)$, while Theorem~\ref{annexe_theo_2} treats the case if $p(z)$ is a rational fraction.

We will use the notions of \emph{degree}, \emph{height}, \emph{size} and \emph{distance to a point} of a projective varieties or a homogeneous ideals. These notions extend the corresponding characteristics of polynomials. We refer the reader to~\cite{NP}, Chapters~5-8 for the definitions and properties of degree $\deg(V)$ and height $h(V)$ of a projective variety $V\subset\mpp^n$, as well as for the definition and properties of the (projective) distance of $V$ to a point $x\in\mpp^n$, which is denoted by $\Dist(x,V)$.

The \emph{size} t(V) of a projective variety $V\subset\mpp^n$ is defined by
\begin{equation} \label{def_tV}
t(V):=h(V)+(\dim(V)+1)\deg(V)\log(n+1).
\end{equation}

Let $\alpha\in\ol{\mqq}$ be an algebraic number and let $|\cdot|$ denotes the archimedean absolute value on $\mcc$. 
We call the \emph{house of $\alpha$} the maximum
$$
\house{\alpha}:=\max_{\sigma:\ol{Q}\hookrightarrow\mcc}|\sigma(\alpha)|,
$$
where the maximum is taken over all the possible embeddings $\sigma$ of the field of algebraic numbers $\ol{\mqq}$ into $\mcc$.

Let $m,n\in\mnn$ and let $P\in\ol{\mqq}[X_1,\dots,X_n]=\sum_{0\leq m_1,\dots,m_n\leq m}a_{m_1,\dots,m_n}X_1^{m_1},\dots X_n^{m_n}$ be a polynomial in $n$ variables with algebraic coefficients, $a_{m_1,\dots,m_n}\in\ol{\mqq}$ for all $0\leq m_1,\dots,m_n\leq m$. We define the {\emph length} of the polynomial $P$ to be the sum of the houses of all its coefficients:
$$
L(P):=\sum_{0\leq m_1,\dots,m_n\leq m}\house{a}_{m_1,\dots,m_n}.
$$

Here is one of our main results on algebraic independence.

\begin{theorem} \label{annexe_theo_ia1}
Let $p({z})\in\ol{\mqq}[{z}]$ and let $f_1({z})$,\dots,$f_n({z})$ be functions analytic in a neighbourhood $U$ of 0 and satisfying~\eqref{systeme_1} as described in the beginning of this paper. Also, recall the notation~\eqref{intro_trdeg}. Let $y\in \ol{\mqq}\cap U$ be such that
\begin{equation*}
    p^{[h]}(y)\rightarrow 0
\end{equation*}
as $h\rightarrow\infty$ and no iterate $p^{[h]}(y)$ is a zero of $z\det A(z)$.


Then there is a constant $C>0$ such that for any variety $W\subset\mpp^n_{\mqq}$ of dimension $k < t+1-\frac{\log d}{\log\delta}$, one has
\begin{equation} \label{annexe_theo_ia1_result}
   \log\Dist(x,W)\geq-Ch(W)\left(\log h(W)\right)^{\frac{(t+1)\left(\frac{\log d}{\log\delta}-1\right)}{t-k+1-\frac{\log d}{\log\delta}}}\left(\deg(W)\right)^{\frac{t+1}{t-k+1-\frac{\log d}{\log\delta}}}
\end{equation}
where $x=\left(1:f_1(y):\dots:f_n(y)\right)\in\mpp^n_{\mcc}$.
\end{theorem}
\begin{remark} \label{remak_Dist}
The definition of $\Dist(x,W)$, for a point $x\in\mpp^n$ and a subvariety $W$of the same space, can be found in \cite[ Chapter~6, \S~5]{NP} or~\cite[\S~1.3]{EZ2010} (see~\cite[Definition~1.17]{EZ2010} and discussion after it). There are two simple cases which are, in a sense, the most important. First of all, if $W$ is a hypersurface defined by a homogeneous polynomial $P$, then $\Dist(x,W)$ is essentially $|P(x)|$ (more precisely, in this case $\log\Dist(x,W)=\log|P(x)|-\deg(P)\cdot\log|x|-\log|P|$, a so called \emph{normalized value of $P$ at $x$}, which gets rid of common factors in coordinates of a representative of the projective point $x$ and of the size of coefficients of $P$). So essentially if $k=n-1$ in Theorem~\ref{annexe_theo_ia1}, then one can, roughly speaking, substitute $\log|P(x)|$ in place of $\log\Dist(x,W)$ in the left hand side of~\eqref{annexe_theo_ia1_result}. 

On the other hand, for all points $x\in\mpp^n$ and all subvarieties $W\in\mpp^n$ one has $\Dist(x,W)=0$ iff $x\in W$. So if some value of $k$ is admitted in Theorem~\ref{annexe_theo_ia1} (i.e. if $k < t+1-\frac{\log d}{\log\delta}$), then at least $k+1$ of values $f_1(y),\dots,f_n(y)$ are algebraically independent over $\mqq$ (as the r.h.s. of~\eqref{annexe_theo_ia1_result} is $>-\infty$ in this case). Using this fact we readily deduce the following two corollaries.
\end{remark}
\begin{corollary} \label{annexe_cor_ia0}
Assuming the conditions of Theorem~\ref{annexe_theo_ia1} one has
\begin{equation*}
    \trdeg_{\mqq}\mqq\left(f_1(y),\dots,f_n(y)\right)\geq t+1-\left\lfloor\frac{\log d}{\log\delta}\right\rfloor,
\end{equation*}
where $\lfloor*\rfloor$ denotes the biggest integer less than or equal to $*$.
\end{corollary}
\begin{corollary} \label{annexe_cor_ia1}
Assuming the conditions of Theorem~\ref{annexe_theo_ia1} and $\frac{\log d}{\log\delta}<2$ one has
\begin{equation} \label{annexe_cor_ia1_est} \index{Degre de transcendance, minoration@Degr\'e de transcendance, minoration}
    \trdeg_{\mqq}\mqq\left(f_1(y),\dots,f_n(y)\right)=t.
\end{equation}
\end{corollary}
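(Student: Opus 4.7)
The plan is to deduce this corollary directly from Corollary~\ref{annexe_cor_ia0} combined with the trivial upper bound: a field generated over $\mqq$ by $n$ elements has transcendence degree at most $n$. The only point requiring justification is the evaluation of the integer part $[\log d/\log\delta]$ under the additional hypothesis $\log d/\log\delta<2$.

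First I would observe that the ratio $\log d/\log\delta$ is always at least $1$. Indeed, from the definitions of $d$ and $\delta$ together with the standing hypothesis $\delta\geq 2$ on the functional equation~\eqref{systeme_1},
$$d=\max(\deg p_1,\deg p_2)\geq\deg p_1\geq\ordz p_1=\delta+\ordz p_2\geq\delta\geq 2,$$
so $\log d\geq\log\delta>0$ and hence $\log d/\log\delta\geq 1$. Combining this with the hypothesis $\log d/\log\delta<2$ forces $[\log d/\log\delta]=1$. Applying Corollary~\ref{annexe_cor_ia0} then yields
$$\trdeg_{\mqq}\mqq(f_1(y),\dots,f_n(y))\geq n+1-1=n.$$
The reverse inequality $\trdeg_{\mqq}\mqq(f_1(y),\dots,f_n(y))\leq n$ is automatic, and putting the two bounds together produces the claimed equality~\eqref{annexe_cor_ia1_est}.

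There is no substantive obstacle here: all of the serious work has been accomplished in Theorem~\ref{annexe_theo_ia1} and its first corollary; the present statement is a purely formal consequence, with the elementary verification that $d\geq\delta$ as the only non-tautological point.
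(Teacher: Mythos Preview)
Your proof is correct and matches the paper's approach: the paper does not give a separate argument for this corollary but simply states (in the Remark preceding it) that both Corollaries~\ref{annexe_cor_ia0} and~\ref{annexe_cor_ia1} are ``readily deduced'' from Theorem~\ref{annexe_theo_ia1}. Your explicit verification that $d\geq\delta$ (hence $[\log d/\log\delta]=1$ under the hypothesis) is exactly the small computation the paper leaves to the reader.
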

\begin{remark}
Corollary~\ref{annexe_cor_ia0} improves the lower bound
\begin{equation*}
    \trdeg_{\mqq}\mqq\left(f_1(y),\dots,f_n(y)\right)\geq \lceil(t+1)\frac{\log\delta}{\log d}-1\rceil,
\end{equation*}
established, in the case $n=t$ only, by in~\cite[Theorem~3]{ThTopfer1995}, where $\lceil*\rceil$ denotes the smallest integer bigger than or equal to $*$.

Corollary~\ref{annexe_cor_ia1} improves on~\cite[Corollary~2]{ThTopfer1995}, which gives only the case $n=1$ of~\eqref{annexe_cor_ia1_est}.
\end{remark}

We can also give a measure of algebraic independence of values $y,f_1(y),\dots,f_n(y)$, for an arbitrary $y\in\mcc^*$, which need not to be algebraic. This type of results for a transcendental $y$ has not been considered before, though our estimates in this situation are weaker than in the case of algebraic $y$.


\begin{theorem} \label{annexe_theo_ia2} Let $f_1({z})$,\dots,$f_n({z})$ be analytic functions as described in the beginning of this paper, and recall the notation~\eqref{intro_trdeg}. Assume that $p({z})\in\ol{\mqq}[{z}]$ with $\delta=\ord_{z=0} p({z})\geq 2$  and $d=\deg p(z)$.
Let $y\in U$ be such that
\begin{equation*}
    p^{[h]}(y)\rightarrow 0\text{ as }h\rightarrow\infty
\end{equation*}
and no iterate $p^{[h]}(y)\ne 0$ is a zero of $z\det A(z)$.

Then for all $\varepsilon>0$ there is a constant $C$ such that for any variety $W\subset\mpp^{n+1}_{\mqq}$ of dimension $k < t+1-2\frac{\log d}{\log\delta}$, one has
\begin{equation}  \label{annexe_theo_ia2_result}
   \log\Dist(x,W)\geq-C\max\left(\left(\deg(W)\right)^{\frac{t+2+\varepsilon}{t-k+1-2\frac{\log d}{\log\delta}-\varepsilon}},h(W)^{\frac{t+2+\varepsilon}{t-k+2-\frac{\log d}{\log\delta}-\varepsilon}}\right),
\end{equation}
where $x=\left(1:y:f_1(y):\dots:f_n(y)\right)\in\mpp^{n+1}_{\mcc}$.
\end{theorem}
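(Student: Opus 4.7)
The plan is to apply the algebraic independence criterion of Section~\ref{section_cai} to the point $x = (1:y:f_1(y):\dots:f_n(y)) \in \mpp^{n+1}_{\mcc}$, following the same general scheme from~\cite{PP_KF} that underlies Theorem~\ref{annexe_theo_ia1}. The key novelty compared to that theorem is that $y$ is now treated as a genuine additional coordinate of the target point: one therefore works with polynomials in $n+2$ variables and with a point in $\mpp^{n+1}$ instead of $\mpp^n$.

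First, I would use the extrapolative construction of Theorem~\ref{annexe_P7} together with the $K$-function formalism recalled in Section~\ref{section_K_functions} to produce an auxiliary polynomial $P$ in $n+2$ variables, of prescribed degree $D$ and logarithmic height $H$, vanishing to high order along the formal curve $\{(1:z:\ull{f}(z))\}$ at $z=0$. For every $h\in\mnn$ I would then iterate the functional equation~\eqref{systeme_1} to convert $P$ into a polynomial $Q_h$ such that $Q_h(x)$ equals $P(1:p^{[h]}(y):\ull{f}(p^{[h]}(y)))$ up to an explicit non-zero factor involving products of $a(p^{[j]}(y))$ and of minors of $A(p^{[j]}(y))$. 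Since $p^{[h]}(y)\to 0$ and $P$ is analytically small near the origin, the values $|Q_h(x)|$ decay exponentially in $\delta^h$.

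The main obstacle, and the reason for the factor $2$ appearing in the threshold $k<n+1-2\log d/\log\delta$, is the control of the degree of $Q_h$. Whereas in Theorem~\ref{annexe_theo_ia1} the algebraic value of $y$ could be absorbed into the base field and contributed only to the height of $Q_h$, here $y$ remains a formal coordinate: substituting $z\mapsto p^{[h]}(z)$ multiplies the partial degree in the $Y_1$-variable (corresponding to~$y$) by $\deg p^{[h]}=d^h$. Consequently both the total degree and the height of $Q_h$ grow like $d^h$, while $|Q_h(x)|$ decays only like $r^{\delta^h}$. In the parameter optimization this imbalance accounts for one extra occurrence of $\log d/\log\delta$ on top of the one already present in Theorem~\ref{annexe_theo_ia1}, hence the coefficient~$2$.

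Finally, I would invoke the multiplicity estimate (Theorem~\ref{theoNishioka}) to preclude identical vanishing of the family $\{Q_h\}$ on any $k$-dimensional subvariety of $\mpp^{n+1}$, thereby converting the smallness of $Q_h(x)$ into a lower bound for $\Dist(x,W)$; optimizing $D$, $H$ and $h$ as functions of $d(W)$ and $h(W)$ and feeding the outcome into the criterion of Section~\ref{section_cai} yields~\eqref{annexe_theo_ia2_result}. The exponents on the right-hand side are exactly those produced by this optimization, with $n+1$ arising from $\dim\mpp^{n+1}=n+1$ and the factor $2\log d/\log\delta$ reflecting the twofold cost of treating $y$ as a transcendental coordinate.
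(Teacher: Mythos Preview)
Your proposal is correct and follows essentially the same route as the paper: construct the auxiliary polynomials $P_{D,T}(z,\ul{X})$ via Proposition~\ref{existence_P_DT} (built on Proposition~\ref{annexe_P7} and the multiplicity estimate Theorem~\ref{theoNishioka}), keep $y$ as a genuine coordinate so that $\deg_z P_{D,T}$ grows like $Dd^T$ while $\deg_{\ul{X}}P_{D,T}$ stays of size $D$, and feed the resulting double estimate~\eqref{Q_double_estimation_statement} into the criterion of Section~\ref{section_cai}; your explanation of the factor~$2$ in the threshold is exactly the paper's mechanism.

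One technical point worth making explicit: the criterion actually invoked here is the \emph{multi-degree} version, Theorem~\ref{CplM}, with asymmetric weights $\delta_1=c_6D'd^T$ for the $y$-coordinate and $\delta_2=\cdots=\delta_{n+1}=D'$ for the $f_i$-coordinates (whereas Theorem~\ref{annexe_theo_ia1} could get away with Corollary~\ref{corTheseJadot59}). This asymmetry is what makes the optimization produce the exponent $2\frac{n+1}{n-k}-\frac{\log\delta}{\log d}\frac{k+1}{n-k}$ rather than something worse; if you applied a single-degree criterion with the total degree $\sim Dd^T$, the bound would deteriorate. Also, the multiplicity estimate is used upstream (inside Proposition~\ref{existence_P_DT}, to bound $T_0=\ordz F$ and secure the lower bound in~\eqref{Q_double_estimation_statement}), not directly to ``preclude vanishing on $k$-dimensional subvarieties''---that part is handled by the criterion itself.
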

One readily deduces two corollaries:
\begin{corollary} \label{annexe_cor_ia2}
Assuming the conditions of Theorem~\ref{annexe_theo_ia2} one has
\begin{equation*}
    \trdeg_{\mqq}\mqq\left(y,f_1(y),\dots,f_n(y)\right)\geq t+1-2\left\lfloor\frac{\log d}{\log\delta}\right\rfloor.
\end{equation*}
\end{corollary}

\smallskip

\begin{corollary} \label{annexe_cor_ia3}
Assuming the conditions of Theorem~\ref{annexe_theo_ia2} and $\frac{\log d}{\log\delta}<3/2$ one has
\begin{equation} \label{annexe_cor_ia3_est}
    \trdeg_{\mqq}\mqq\left(y,f_1(y),\dots,f_n(y)\right)\geq t-1.
\end{equation}
\end{corollary}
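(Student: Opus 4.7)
The plan is to deduce this corollary directly from Corollary~\ref{annexe_cor_ia2}, essentially by a one-line arithmetic specialization. Corollary~\ref{annexe_cor_ia2} yields
$$
\trdeg_{\mqq}\mqq(y,f_1(y),\dots,f_n(y)) \geq n+1 - \left[2\tfrac{\log d}{\log\delta}\right],
$$
and the hypothesis $\frac{\log d}{\log\delta} < 3/2$ means $2\frac{\log d}{\log\delta} < 3$, so the integer part appearing above is at most $2$. Substituting gives $\trdeg \geq n+1-2 = n-1$, which is~\eqref{annexe_cor_ia3_est}.

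If I preferred to argue directly from Theorem~\ref{annexe_theo_ia2}, I would pick the borderline dimension $k = n-2$ and verify that the strict inequality $k < n+1 - 2\frac{\log d}{\log\delta}$ is equivalent to $\frac{\log d}{\log\delta} < 3/2$, which is exactly the hypothesis. Hence the theorem applies to every variety $W\subset\mpp^{n+1}_{\mqq}$ with $\dim W \leq n-2$, and for each such $W$ the right-hand side of~\eqref{annexe_theo_ia2_result} is a finite real number. This forces $\Dist(x,W) > 0$, that is, the point $x = (1:y:f_1(y):\dots:f_n(y))$ lies on no $\mqq$-variety of dimension $\leq n-2$. Since the dimension of the Zariski closure of $\{x\}$ over $\mqq$ coincides with $\trdeg_{\mqq}\mqq(y,f_1(y),\dots,f_n(y))$, this closure must have dimension at least $n-1$, proving the corollary.

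There is no genuine obstacle here; all the hard work is already contained in Theorem~\ref{annexe_theo_ia2}, and the corollary merely records the range of $\frac{\log d}{\log\delta}$ in which the admissible integer $k$ reaches $n-2$. I would present the proof in one or two lines, referencing Corollary~\ref{annexe_cor_ia2}, and possibly add a parenthetical remark explaining why the threshold $3/2$ (rather than, say, $2$ as in Corollary~\ref{annexe_cor_ia1}) appears: the extra factor of $2$ in the bound $k < n+1 - 2\frac{\log d}{\log\delta}$ of Theorem~\ref{annexe_theo_ia2}, compared with $k < n+1 - \frac{\log d}{\log\delta}$ in Theorem~\ref{annexe_theo_ia1}, reflects the cost of including the transcendental coordinate $y$ in the point $x$.
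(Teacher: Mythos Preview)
Your proof is correct and matches the paper's approach exactly: the paper states that this corollary is ``readily deduced'' from Theorem~\ref{annexe_theo_ia2} in the same way as the analogous Corollary~\ref{annexe_cor_ia1} follows from Theorem~\ref{annexe_theo_ia1}, and your argument via Corollary~\ref{annexe_cor_ia2} (or directly via the admissible value $k=n-2$) is precisely that deduction.
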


\begin{remark}
Corollary~\ref{annexe_cor_ia3} improves on the result~\cite{A1991}, which is applicable only to algebraically independent functions~\eqref{intro_f} solving system~\eqref{systeme_1} with $p(z)=z^d$ and than provides only the lower bound $\left\lfloor\frac{n+1}{2}\right\rfloor$.
\end{remark}

The next theorem improves Theorems~1 and~2 of~\cite{ThTopfer1995}, qualitatively et quantitatively.
\begin{theorem} \label{annexe_theo_2}
 Let $f_1({z})$,\dots,$f_n({z})$ be a collection of functions such as described in the beginning of this paper, and recall the notation~\eqref{intro_trdeg}. In this statement we assume $p(\b{z})\in\ol{\mqq}(\b{z})$ in the system~\eqref{systeme_1} and not only $p(\b{z})\in\ol{\mqq}[\b{z}]$ (compare with more restrictive assumption $p(\b{z})\in\ol{\mqq}[\b{z}]$ in preceding Theorems~\ref{annexe_theo_ia1} and~\ref{annexe_theo_ia2}). We keep the notation $d=\deg p$, $\delta=\ordz p\geq 2$. 
 Assume that $f_i(0)=0$, $i=1,\dots,n$, a number $y\in U\cap\ol{\mqq}$ satisfies $\lim_{h\rightarrow\infty}p^{[h]}(y)=0$ and for all $h\in\mnn$ the number $p^{[h]}(y)$ is not a zero of $z\det A(\b{z})$. Then there is a constant $C>0$ such that for any variety $W\subset\mpp^n_{\mqq}$ of dimension $k < t\left(2-\frac{\log d}{\log\delta}\right)$, one has
\begin{equation} \label{annexe_theo_2_result}
   \log\Dist(\ul{x},W)\geq -Ch(W)^{1+\frac{\log d-\log\delta}{(2t-k)\log\delta-t\log d}t}\deg(W)^{\frac{2t}{t\left(2-\frac{\log d}{\log\delta}\right)-k}},
\end{equation}
where $\ul{x}=\left(1:f_1(y):\dots:f_n(y)\right)\in\mpp^n_{\mcc}$.
In particular,
\begin{equation} \label{annexe_theo_2_trdeg} \index{Degre de transcendance, minoration@Degr\'e de transcendance, minoration}
    \trdeg_{\mqq}\mqq\left(f_1(y),\dots,f_n(y)\right)\geq  t\left(2-\frac{\log d}{\log\delta}\right).
\end{equation}
\end{theorem}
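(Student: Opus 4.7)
The plan is to apply the algebraic independence criterion of Section~\ref{section_cai} to the point $\ul{x} = (1:f_1(y):\dots:f_n(y)) \in \mpp^n_\mcc$. To this end, I will construct a sequence of polynomials $(Q_N)_N \subset \mzz[X_0,\dots,X_n]$ of controlled degree and height whose values at $\ul{x}$ are small. The construction proceeds through the extrapolation procedure of Theorem~\ref{annexe_P7}: one starts from an auxiliary polynomial $P$ in $n$ variables of degree $D$ and height $H$ (supplied by a Siegel-type lemma applied to the power series $P(f_1(z),\dots,f_n(z))$), and iterates the functional equation~\eqref{systeme_1} to express $P(f_1(y),\dots,f_n(y))$ in terms of $P(f_1(p^{[h]}(y)),\dots,f_n(p^{[h]}(y)))$. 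Since $p^{[h]}(y) \to 0$ at rate $\delta^h$ and $f_i(0) = 0$, the pulled-back value is small, roughly of size corresponding to order $\delta^h$ of vanishing at $0$.

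The main novelty compared to Theorem~\ref{annexe_theo_ia1} is that $p(z) = p_1(z)/p_2(z)$ is now genuinely rational. Each iterate $p^{[h]}(z)$ has numerator and denominator of degrees bounded by $d^h$. When one clears all denominators along the $h$ iteration steps -- incorporating the contributions of $a(z)$, $\det A(z)$, and the powers of $p_2^{[h]}(z)$ that appear after substitution into a polynomial of degree $D$ in $n$ variables -- the resulting polynomial $Q_N$ picks up a clearing factor of total degree of order $(n+1) D d^h$, against only $\sim D d^h$ in the polynomial case. This is the source of the factor $(n+1)$ in the condition $k < 2n+1-\frac{\log d}{\log\delta}(n+1)$ and gives rise to the exponent $\frac{1}{1-\frac{\log d}{\log\delta}\frac{n+1}{2n-k+1}}$ appearing in~\eqref{annexe_theo_2_result}. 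The multiplicity estimate of Theorem~\ref{theoNishioka} is invoked to control the order of vanishing at $0$ of $P(f_1(z),\dots,f_n(z))$ from above, guaranteeing that the constructed polynomials $Q_N$ do not vanish identically on $\ul{x}$.

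Once $D$, $H$, and the number of iterations $h$ are calibrated, the family $(Q_N)$ fulfils the hypotheses of the algebraic independence criterion from Section~\ref{section_cai}, which yields~\eqref{annexe_theo_2_result}. The transcendence-degree estimate~\eqref{annexe_theo_2_trdeg} follows at once: if $\ul{x}$ lay on a variety $W$ of dimension $k < 2n+1-\frac{\log d}{\log\delta}(n+1)$ defined over $\ol{\mqq}$, then $\Dist(\ul{x},W) = 0$, contradicting the fact that the right-hand side of~\eqref{annexe_theo_2_result} is finite for such $k$. The main technical obstacle is the careful bookkeeping of the degrees and heights introduced by iterating the rational map $p$ -- tracking the degrees and heights of $p_1^{[h]}$, $p_2^{[h]}$, $a(z)$, $\det A(z)$ along the orbit $\{p^{[m]}(y)\}_m$ and verifying the hypothesis that none of these iterates coincides with the poles and zeros encountered -- together with the fine optimisation of $(D,H,h)$ against the bound furnished by Theorem~\ref{theoNishioka}, so that the resulting exponents match exactly those claimed in~\eqref{annexe_theo_2_result}.
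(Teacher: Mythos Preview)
Your overall strategy---construct an auxiliary polynomial via Siegel's lemma, iterate the functional equation to obtain a sequence $P_{D,T}$ with controlled degree/height and a double estimate at $\ul{x}$, then feed this into the criterion of Section~\ref{section_cai}---matches the paper's. However, two points in your sketch diverge from the actual proof and deserve attention.

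First, the paper does \emph{not} use the $K$-function extrapolation machinery (Proposition~\ref{annexe_P7}) for this theorem. It uses instead Proposition~\ref{annexe_interpolation_Topfer}, borrowed from T\"opfer, precisely because---as the Remark following that proposition explains---the extrapolation route of Proposition~\ref{existence_P_DT} is set up only for polynomial $p$. So the polynomial sequence you need here is the one supplied by Proposition~\ref{annexe_interpolation_Topfer}, not by Proposition~\ref{annexe_P7}.

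Second, your diagnosis of where the factor $(n{+}1)$ in the constraint $k<2n+1-\tfrac{\log d}{\log\delta}(n{+}1)$ originates is off. It does not come from the degree in $z$ after clearing denominators: if you carry out the recursion of Lemma~\ref{lemma_P7} with rational $p$, multiplying additionally by $p_2(y)^{\deg_z P_{D,T}}$ at each step, the degree in $z$ and the length of $P_{D,T}$ still grow like $c\,D\,d^{T}$, not $(n{+}1)D\,d^{T}$. In the paper's argument the factor $(n{+}1)$ enters through the hypothesis~\eqref{annexe_interpolation_Topfer_c1}, $\delta^{T}\geq C_3 D^{\,n+1}$, required for the double estimate~\eqref{annexe_interpolation_Topfer_b4}. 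This is strictly stronger than the analogous constraint~\eqref{annexe_defT} (essentially $\delta^{T}\gtrsim D$) available in the polynomial case, and it is this extra exponent $n{+}1$ on $D$ that, after optimising $D$ and $T$ against $h(W),d(W)$ as in~\eqref{annexe_theo_2_defD}--\eqref{annexe_theo_2_defT}, produces the weaker range of admissible $k$ and the exponent $\bigl(1-\tfrac{\log d}{\log\delta}\tfrac{n+1}{2n-k+1}\bigr)^{-1}$ in~\eqref{annexe_theo_2_result}. In short: the loss relative to Theorem~\ref{annexe_theo_ia1} is in the \emph{height} control of the auxiliary polynomial (the term $D^{n}$ in~\eqref{annexe_interpolation_Topfer_b3}) and the resulting constraint on $(D,T)$, not in the degree picked up by clearing rational denominators.
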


Now we give a family of concrete examples, with sets of functions of arbitrary size $n\geq 1$ and satisfying all the hypothesis of our theorems. We start with a particular case of~(\ref{systeme_1}) when this system has a simple diagonal form:
\begin{equation} \label{system_khi}
    \chi_i(z)=\chi_i\left(p(z)\right)+q_i(z), \quad i=1,\dots,n,
\end{equation}
where $p\in\ol{\mqq}(z)$ and $q_i\in\ol{\mqq}[z]$, $i=1,\dots,n$. Assuming $\deg q_i\geq 1$ and $q_i(0)=0$, $i=1,\dots,n$, $\ord_{z=0}p\geq 2$ we obtain solutions of~(\ref{system_khi}) analytic in some neighbourhood of 0,
\begin{equation} \label{khi_explicit}
    \chi_i(z)=\chi_i\left(p(z)\right)+q_i(z), \quad i=1,\dots,n.
\end{equation}
Lemma~\ref{annexe_chi_ind} below allows to verify the algebraic independence of $\chi_1,\dots,\chi_n$ over $\mcc(z)$. It is an easy corollary of \cite[Lemma~6]{ThTopfer1995} (as well as of \cite[Theorem~2]{Kubota1977}).
\begin{lemma} \label{annexe_chi_ind}
Let $n\in\mnn^*$, $q_i\in\mcc[\b{z}]$, $i=1,\dots,n$ and $p\in\mcc[\b{z}]$ satisfying $q_i(0)=0$, $i=1,\dots,n$ and $p(0)=0$.
Let $\chi_1,\dots,\chi_n\in\mcc((\b{z}))$ be functions defined by~(\ref{khi_explicit}). Suppose that $1,q_1,\dots,q_n$ are $\mcc$-linearly independent and at least one of the following conditions is satisfied:
\begin{enumerate}
  \item \label{annexe_chi_ind_condition_a} $\deg p \nmid \deg\left(\sum_{i=1}^n s_i q_i(\b{z})\right)$ for all $(s_1,\dots,s_n)\in\mcc^n\setminus\{\ul{0}\}$, \label{annexe_theo_khi_cond1}
  \item \label{annexe_chi_ind_condition_b} $\sum_{i=1}^n s_i \chi_i(\b{z}) \not\in \mcc[\b{z}]$ for all $(s_1,\dots,s_n)\in\mcc^n\setminus\{\ul{0}\}$. \label{annexe_theo_khi_cond2}
\end{enumerate}
Then the functions $\chi_1,\dots,\chi_n$ are algebraically independent over $\mcc(\b{z})$.
\end{lemma}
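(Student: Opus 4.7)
My plan is to invoke the classical Kubota-type criterion, which reduces algebraic independence of Mahler functions to a linear-independence statement modulo polynomials, and then verify that linear-independence under each of the two alternative hypotheses.

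Specifically, I would first cite Lemma~6 of~\cite{ThTopfer1995} (equivalently Theorem~2 of~\cite{Kubota1977}) in the form: for solutions $\chi_1,\dots,\chi_n$ of the diagonal system~\eqref{khi_explicit} with $p(0)=0$ and $p(\b{z})\ne\b{z}$, algebraic independence over $\mcc(\b{z})$ follows as soon as no non-trivial $\mcc$-linear combination $\sum_i s_i\chi_i(\b{z})$ belongs to $\mcc[\b{z}]$. This reduces the lemma to checking this linear-independence property of the $\chi_i$ modulo $\mcc[\b{z}]$ under each of the two hypotheses.

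Under condition~\ref{annexe_chi_ind_condition_b} the desired property is precisely the hypothesis, and no additional work is needed. Under condition~\ref{annexe_chi_ind_condition_a} I would argue by contradiction: assuming $P(\b{z}):=\sum_i s_i\chi_i(\b{z})\in\mcc[\b{z}]$ for some non-zero tuple $(s_1,\dots,s_n)$, summing the equations~\eqref{khi_explicit} weighted by the $s_i$ gives
\begin{equation*}
P(\b{z})-P(p(\b{z}))=\sum_{i=1}^n s_iq_i(\b{z}).
\end{equation*}
The right-hand side is a non-zero polynomial, by the $\mcc$-linear independence of $1,q_1,\dots,q_n$. Condition~\ref{annexe_chi_ind_condition_a} implicitly forces $\deg p\ge 2$ (otherwise divisibility by $\deg p=1$ would be automatic), so if $\deg P\ge 1$ the leading terms of $P$ and $P\circ p$ cannot cancel and the left-hand side has degree $\deg P\cdot\deg p$, which is divisible by $\deg p$; this contradicts condition~\ref{annexe_chi_ind_condition_a}. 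If $\deg P=0$ the left-hand side vanishes, forcing $\sum s_iq_i=0$, again a contradiction.

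The only step that is not essentially a one-liner is the appeal to Lemma~6 of~\cite{ThTopfer1995}; translating its hypotheses into the diagonal setup of~\eqref{system_khi} is routine bookkeeping, and I do not expect any genuine obstacle there.
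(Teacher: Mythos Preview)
Your proposal is correct and matches the paper's own treatment: the paper does not give a detailed argument but simply states that the lemma ``is an easy corollary of Lemma~6, \cite{ThTopfer1995} (as well as of Theorem~2, \cite{Kubota1977}).'' Your write-up supplies exactly the missing details, namely the degree comparison showing that condition~\ref{annexe_chi_ind_condition_a} forces condition~\ref{annexe_chi_ind_condition_b}, after which the cited Kubota/T\"opfer criterion applies directly.
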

Using this lemma (especially point~(\ref{annexe_chi_ind_condition_a}) which is due to Th.T\"opfer) we can produce a large family of algebraically independent  sets of functions~(\ref{khi_explicit}). All these sets satisfy the hypothesis imposed on functions $f_1,\dots,f_n$ considered in this article, so we can apply Theorems~\ref{annexe_theo_ia1}, \ref{annexe_theo_ia2}, \ref{annexe_theo_2} and their corollaries to them substituting $n$ in place of $t=t(\ul{f})$.
\begin{theorem} \label{annexe_theo_khi_2} Let $p\in\ol{\mqq}(z)$, $d=\deg p$ and $\delta=\ord_{z=0}p\geq 2$. Let $n\in\mnn^*$, $q_i\in\ol{\mqq}[z]$, $q_i(0)=0$ ($i=1,\dots,n$) and let $\chi_i(z)$ be functions defined by~\eqref{system_khi}. Assume that $1,q_1,\dots,q_n$ are $\mcc$-linearly independent and that at least one of the following conditions is satisfied:
\begin{enumerate}
  \item $\deg p \nmid \deg\left(\sum_{i=1}^n s_i q_i(z)\right)$ for all $(s_1,\dots,s_n)\in\mcc^n\setminus\{\ul{0}\}$. \label{annexe_theo_khi_2_cond1_i}
  \item $\sum_{i=1}^n s_i \chi_i(z) \not\in \mcc[z]$ for all $(s_1,\dots,s_n)\in\mcc^n\setminus\{\ul{0}\}$. \label{annexe_theo_khi_2_cond2_i}
\end{enumerate}

Fix a $y\in\ol{\mqq}^*$ such that $p^{[h]}(y)\rightarrow 0$ as $h\rightarrow\infty$ and $p^{[h]}(y)\ne 0$ for all $h\in\mnn$. Then there exists a constant $C>0$ such that for every variety $W\subset\mpp^n_{\mqq}$ of dimension $k < n\left(2-\frac{\log d}{\log\delta}\right)$, one has the measure of algebraic independence~\eqref{annexe_theo_2_result} 
at $x=\left(1:\chi_1(y):\dots:\chi_n(y)\right)\in\mpp^n_{\mcc}$.

In particular,
\begin{equation*}
    \trdeg_{\mqq}\mqq\left(\chi_1(y),\dots,\chi_n(y)\right)\geq n\left(2-\frac{\log d}{\log\delta}\right).
\end{equation*}
\end{theorem}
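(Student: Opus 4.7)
The plan is to deduce Theorem~\ref{annexe_theo_khi_2} directly from Theorem~\ref{annexe_theo_2} applied to the diagonal system~\eqref{system_khi}. First I would cast~\eqref{system_khi} into the matrix form~\eqref{systeme_1} by setting $a(z):=1$, $A(z):=I_n$ (the identity matrix) and $B(z):=(q_1(z),\dots,q_n(z))^{t}$. With this choice one has $a(z)\equiv 1$ and $\det A(z)\equiv 1$, so the polynomial $z\det A(z)$ has only the zero $z=0$; consequently the non-vanishing requirement in Theorem~\ref{annexe_theo_2} that no iterate $p^{[m]}(y)\ne 0$ be a zero of $a(z)$ or $\det A(z)$ reduces to the assumption $p^{[m]}(y)\ne 0$ for all $m$, which is part of the hypotheses on $y$ in the current statement.

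Next I would verify the remaining analytic and algebraic prerequisites. The analyticity of each $\chi_i$ in a neighborhood $U$ of $0$ follows from iterating~\eqref{system_khi}: since $\ord_{z=0}p\geq\delta\geq 2$ and $q_i(0)=0$, the series $\chi_i(z)=\sum_{h\geq 0}q_i(p^{[h]}(z))$ converges on a small disk around $0$, in the same stroke furnishing the vanishing $\chi_i(0)=0$ demanded by Theorem~\ref{annexe_theo_2}. The algebraic independence of $\chi_1,\dots,\chi_n$ over $\mcc(z)$, on the other hand, is the precise content of Lemma~\ref{annexe_chi_ind}: its hypotheses ($\mcc$-linear independence of $1,q_1,\dots,q_n$ together with either of the two alternatives~\ref{annexe_theo_khi_2_cond1_i} or~\ref{annexe_theo_khi_2_cond2_i}) are exactly the assumptions we have made.

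With all the hypotheses of Theorem~\ref{annexe_theo_2} satisfied for $\ull{f}:=(\chi_1,\dots,\chi_n)$ and the given $y\in\ol{\mqq}^*$, I would then simply quote its conclusion: for every variety $W\subset\mpp^n_{\mqq}$ of dimension $k<2n+1-\frac{\log d}{\log\delta}(n+1)$ the estimate~\eqref{annexe_theo_2_result} holds at the point $x=(1:\chi_1(y):\dots:\chi_n(y))$, and in particular the transcendence degree inequality~\eqref{annexe_theo_2_trdeg} specializes to the announced lower bound for $\trdeg_{\mqq}\mqq(\chi_1(y),\dots,\chi_n(y))$.

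Since the substantive work is already carried out in Theorem~\ref{annexe_theo_2} and in Lemma~\ref{annexe_chi_ind}, there is no genuine combinatorial obstacle; the only point deserving a comment is that the apparently nontrivial algebraic independence of $\chi_1,\dots,\chi_n$ over $\mcc(z)$ — typically the hard input in this circle of results — is exactly what Lemma~\ref{annexe_chi_ind}, and through it Th.~T\"opfer's Lemma~6 of~\cite{ThTopfer1995}, supplies for free under either alternative~\ref{annexe_theo_khi_2_cond1_i} or~\ref{annexe_theo_khi_2_cond2_i}. Everything else in the passage from the general Theorem~\ref{annexe_theo_2} to the concrete family~\eqref{system_khi} is purely a matter of checking that the diagonal structure meets the conditions of~\eqref{systeme_1}.
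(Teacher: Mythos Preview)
Your proposal is correct and matches the paper's approach: the paper presents Theorem~\ref{annexe_theo_khi_2} as a direct application of Theorem~\ref{annexe_theo_2} to the diagonal system~\eqref{system_khi}, using Lemma~\ref{annexe_chi_ind} to secure the algebraic independence of $\chi_1,\dots,\chi_n$ over $\mcc(z)$, and you have spelled out precisely this verification.
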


\begin{theorem} \label{annexe_theo_khi} Let $p\in\ol{\mqq}[z]$, $d=\deg p$ and $\delta=\ord_{z=0}p\geq 2$. Let $n\in\mnn^*$, $q_i\in\ol{\mqq}[z]$, $q_i(0)=0$ ($i=1,\dots,n$) and let $\chi_i(z)$ be functions defined by~\eqref{system_khi}. Assume that $1,q_1,\dots,q_n$ are $\mcc$-linearly independent and that at least one of the following conditions is satisfied:
\begin{enumerate}
  \item $\deg p \nmid \deg\left(\sum_{i=1}^n s_i q_i(z)\right)$ for all $(s_1,\dots,s_n)\in\mcc^n\setminus\{\ul{0}\}$. \label{annexe_theo_khi_cond1_i}
  \item $\sum_{i=1}^n s_i \chi_i(z) \not\in \mcc[z]$ for all $(s_1,\dots,s_n)\in\mcc^n\setminus\{\ul{0}\}$. \label{annexe_theo_khi_cond2_i}
\end{enumerate}

Fix a $y\in\mcc^*$ such that
\begin{equation*} 
\lim_{h\rightarrow\infty}p^{[h]}(y)=0\text{ and } p^{[h]}(y)\ne 0\text{ for all }h\in\mnn.
\end{equation*}
Then the following holds true.
\begin{enumerate}
  \item \label{annexe_theo_khi_point_i} For every $\varepsilon>0$ there exists a constant $C$ such that for every projective variety $W\subset\mpp^{n+1}_{\mqq}$ of dimension $k < n+1-2\frac{\log d}{\log\delta}$, one has the measure of algebraic independence~\eqref{annexe_theo_ia2_result} at $x=\left(1:y:\chi_1(y):\dots:\chi_n(y)\right)\in\mpp^{n+1}_{\mcc}$. In particular,
\begin{equation*}
    \trdeg_{\mqq}\mqq\left(y,\chi_1(y),\dots,\chi_n(y)\right)\geq n+1-2\left\lfloor\frac{\log d}{\log\delta}\right\rfloor.
\end{equation*}
  \item \label{annexe_theo_khi_point_ii} Moreover, if $y\in\ol{\mqq}^*$, then 
there exists a constant $C>0$ such that for all projective variety $W\subset\mpp^n_{\mqq}$ of dimension $k < n+1-\frac{\log d}{\log\delta}$, one has the measure of algebraic independence~\eqref{annexe_theo_ia1_result} at $x=\left(1:\chi_1(y):\dots:\chi_n(y)\right)\in\mpp^n_{\mcc}$. In particular,
\begin{equation*}
    \trdeg_{\mqq}\mqq\left(\chi_1(y),\dots,\chi_n(y)\right)\geq n+1-\left\lfloor\frac{\log d}{\log\delta}\right\rfloor.
\end{equation*}
\end{enumerate}
\end{theorem}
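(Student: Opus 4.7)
The plan is to show that Theorem~\ref{annexe_theo_khi} is essentially a direct specialization of Theorems~\ref{annexe_theo_ia1} and~\ref{annexe_theo_ia2} to the diagonal subsystem~\eqref{system_khi}. First, I would rewrite~\eqref{system_khi} in the matrix shape~\eqref{systeme_1} by taking $a(z)\equiv 1$, $A(z)=I_n$ the $n\times n$ identity matrix, $B(z)=(q_1(z),\dots,q_n(z))^t$, and $\ull{f}=\ull{\chi}$. Since $\det A(z)\equiv 1$, the hypothesis of Theorems~\ref{annexe_theo_ia1} and~\ref{annexe_theo_ia2} that no iterate $p^{[h]}(y)$ is a zero of $z\det A(z)$ reduces in our setting to $p^{[h]}(y)\ne 0$ for every $h\in\mnn$, which is precisely what is assumed on $y$.

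Second, I need to check that $\chi_1,\dots,\chi_n$ are algebraically independent over $\mcc(z)$, which is the remaining standing hypothesis on the $f_i$ used throughout the paper. This is exactly the content of Lemma~\ref{annexe_chi_ind}: under the $\mcc$-linear independence of $1,q_1,\dots,q_n$ together with either condition~(\ref{annexe_theo_khi_cond1_i}) or condition~(\ref{annexe_theo_khi_cond2_i}), the $\chi_i$ are algebraically independent over $\mcc(z)$. A minor accompanying verification is that the $\chi_i$ are indeed analytic in some neighborhood $U$ of $0$ and that $y\in U$ eventually, in the sense that the orbit $\{p^{[h]}(y)\}$ enters $U$ after finitely many steps; this follows from $\delta\geq 2$ together with the assumed convergence $p^{[h]}(y)\to 0$, so we may replace $y$ by $p^{[h_0]}(y)$ for $h_0$ large and conclude without loss of generality that $y\in U$.

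Third, with all hypotheses of the main theorems verified for the system satisfied by $\ull{\chi}$, I would plug in directly: for part~(\ref{annexe_theo_khi_point_i}), invoking Theorem~\ref{annexe_theo_ia2} at the point $x=(1:y:\chi_1(y):\dots:\chi_n(y))\in\mpp^{n+1}_{\mcc}$ yields the measure of algebraic independence~\eqref{annexe_theo_ia2_result} and, as a direct consequence via Corollary~\ref{annexe_cor_ia2}, the claimed lower bound on the transcendence degree. For part~(\ref{annexe_theo_khi_point_ii}), the additional hypothesis $y\in\ol{\mqq}^*$ allows us to apply instead Theorem~\ref{annexe_theo_ia1} at $x=(1:\chi_1(y):\dots:\chi_n(y))\in\mpp^n_{\mcc}$, which furnishes the stronger estimate~\eqref{annexe_theo_ia1_result} and, through Corollary~\ref{annexe_cor_ia0}, the sharper transcendence degree bound $n+1-[\log d/\log\delta]$.

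There is no substantive obstacle: the entire argument is a translation step plus an application of Lemma~\ref{annexe_chi_ind}, after which the heavy lifting is done by the prior theorems. The only point requiring minor care is a bookkeeping remark that the change from $y$ to $p^{[h_0]}(y)$ (to guarantee the orbit lies in the disc of analyticity of the $\chi_i$) does not alter the field $\mqq(y,\chi_1(y),\dots,\chi_n(y))$ up to algebraic dependencies, because the functional equations~\eqref{khi_explicit} express $\chi_i(y)$ in terms of $\chi_i(p^{[h_0]}(y))$ and the algebraic quantities $q_i(p^{[j]}(y))$ for $j<h_0$; this preserves both the transcendence degree statement and, up to multiplicative constants absorbed into $C$, the quantitative measure.
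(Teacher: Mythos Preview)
Your proposal is correct and matches the paper's own (implicit) proof: the paragraph preceding Theorem~\ref{annexe_theo_khi_2} states that Lemma~\ref{annexe_chi_ind} ensures the $\chi_i$ are algebraically independent over $\mcc(z)$, whence the system~\eqref{system_khi} (with $a\equiv1$, $A=I_n$, $B=(q_1,\dots,q_n)^t$) satisfies all the hypotheses of Theorems~\ref{annexe_theo_ia1} and~\ref{annexe_theo_ia2}, and the conclusions follow at once. Your extra care about replacing $y$ by an iterate $p^{[h_0]}(y)$ to land inside the disc of analyticity is a legitimate technical point the paper leaves tacit; otherwise the arguments are identical.
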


\begin{example} \label{example_M} Consider the function that lies, in a sense, at the origin of Mahler's method: 
$$
M(z)=\sum_{k=0}^{\infty}z^{2^k}.
$$
Then, for all $y\in\mcc$, $0<|y|<1$,  in the infinite family of numbers
\begin{equation} \label{sequence_yM}
y,M(y), M(y^3), M(y^5),\dots,M(y^{2k+1}),\dots
\end{equation}
at most two numbers are algebraically dependent over $\mqq$.
To see this, apply Theorem~\ref{annexe_theo_khi} with $p(z)=z^{2}$, $q_i(z)=z^{2i+1}$, $i=1,2,3,\dots,n$, where $n\in\mnn$ is arbitrarily large. In this case, condition~\ref{annexe_theo_khi_2_cond1_i} of Theorem~\ref{annexe_theo_khi} is verified, hence functions $\chi_i(z)$ defined by~\eqref{system_khi} are algebraically independent. Note that in view of our choice of $p$ and $q_i$ we have $\chi_i(z)=M(z^i)$ for any $i\in\mnn$, hence we infer the claim from Theorem~\ref{annexe_theo_khi}, part~\ref{annexe_theo_khi_point_i}. Moreover, we find with part~\ref{annexe_theo_khi_point_ii} of the same theorem that in the case  if, in addition, $y\in\ol{\mqq}$, then all the numbers
\begin{equation} \label{sequence_M}
M(y), M(y^3), M(y^5),\dots,M(y^{2k+1}),\dots
\end{equation}
are algebraically independent. Note that the latter claim, on algebraic independence of numbers~\eqref{sequence_M} follows from the classical result by Nishioka~\cite{Ni1996}, whilst the example concerning numbers~\eqref{sequence_yM} was previously unknown, as far as the author knows.
\end{example}

\begin{example} \label{example_T}
We can produce a more exotic counterpart of Example~\ref{example_M} using Chebyshev's polynomials
$$
T_n=n\sum_{k=0}^n (-2)^k\frac{(n+k-1)!}{(n-k)!(2k)!}(1-x)^k, n\in\mnn.
$$
It is known that these polynomials commute under composition~\cite{Ritt1922},
$$
T_m(T_n)=T_n(T_m) \text{ for all } m,n\in\mnn.
$$
Using the same reasoning as above, we readily find with Theorem~\ref{annexe_theo_khi} the following result. Define function $\tau$ by
$$
\tau(z)=\sum_{k=1}^{\infty}T_2^{[k]}(z),
$$
where we denote by $T_2^{[k]}$ the $k$-th iteration of the second Chebyshev's polynomial $T_2$. For any $y\in\mcc\setminus\{0\}$ such that $T_2^{[k]}(y)\ne 0$ and
$$
\lim_{k\rightarrow\infty}T_2^{[k]}(y)=0
$$
among the infinite set of numbers
$$
y,\tau(T_3(y)),\tau(T_5(y)),\tau(T_7(y)),\dots,\tau(T_{2k+1}(y)),\dots
$$
at most three numbers are algebraically dependent over $\mqq$. If, in addition, $y\in\ol{\mqq}$, then all the numbers
$$
\tau(T_3(y)),\tau(T_5(y)),\tau(T_7(y)),\dots,\tau(T_{2k+1}(y)),\dots
$$
are algebraically independent.

Note that we cannot produce further examples with the same method because, by a classical result of Ritt~\cite{Ritt1922}, all the pairs of commuting polynomials,
$$
P(Q(z)=Q(P(z)),
$$
are, up to a linear homeomorphism, either both powers of $z$ (this case gives us Example~\ref{example_M}), or both Chebyshev's polynomials (this case gives us Example~\ref{example_T}), or iterates of the same polynomial (the latter case applied to the construction of Examples~\ref{example_M} and~\ref{example_T} produces, evidently, functions that all are algebraically dependent).
\end{example}
One more our example deals with so called Cantor series. These functions were introduced in~\cite{Ta1991} and studied further in~\cite{ThTopfer1995}. They are defined by
\begin{equation} \label{annexe_def_theta_i}
    \theta_i(z)=\sum_{h=0}^{\infty}\frac{1}{q_i(z)q_i(p(z))\cdots q_i(p^{[h]}(z))}, \quad (i=1,\dots,n),
\end{equation}
where $p(z)=p_1(z)/p_2(z)\in\ol{\mqq}(z)$, $\deg p_j=d_j$ ($j=1,2$), $\delta=\ord_{z=0} p\geq 2$, $q_i\in\ol{\mqq}[z]$ with $\deg q_i\geq 1$ and $|q_i(0)|>1$, $i=1,\dots,n$.

The functions $\theta_i$ ($i=1,\dots,n$) are analytic in a neighbourhood of 0 and satisfy the functional equation
\begin{equation*}
    \theta_i(p(z))=q_i(z)\theta_i(z)-1, \quad (i=1,\dots,n).
\end{equation*}
It is verified in~\cite{ThTopfer1995} that under some conditions, given in the statements of Theorems~\ref{annexe_theo_theta} and~\ref{annexe_theo_theta_2} below, these functions are algebraically independent.

If $p(z)$ is a polynomial, we are in measure to apply Theorems~\ref{annexe_theo_ia1} and~\ref{annexe_theo_ia2} to find the following results (compare with~\cite[Corollary~6]{ThTopfer1995})

\begin{theorem} \label{annexe_theo_theta} Let us consider different polynomials $q_1,\dots,q_n\in\ol{\mqq}[z]$ and let $p$ be in $\ol{\mqq}[z]$ such that $|q_i(0)|>1$ and $1\leq\deg q_i<d-1$, $i=1,\dots,n$, and $2<\deg p= d$. Define functions $\theta_i$, $i=1,\dots,n$, by~\eqref{annexe_def_theta_i}. Let $y\in\mcc^*$ satisfy $\lim_{h\rightarrow\infty}p^{[h]}(y)=0$ and $q_i\left(p^{[h]}(y)\right)\ne 0$ and $p^{[h]}(y)\ne 0$ for all $h\in\mnn$, $i=1,\dots,n$.
Then the following holds true.
\begin{enumerate}
  \item For every $\varepsilon>0$ there exists a constant $C>0$ such that for every projective variety $W\subset\mpp^{n+1}_{\mqq}$ of dimension $k < n+1-2\frac{\log d}{\log\delta}$, one has the measure of algebraic independence~\eqref{annexe_theo_ia2_result} at $\ul{x}=\left(1:y:\theta_1(y):\dots:\theta_n(y)\right)\in\mpp^{n+1}_{\mcc}$.
In particular,
\begin{equation*}
    \trdeg_{\mqq}\mqq\left(y,\theta_1(y),\dots,\theta_n(y)\right)\geq n+1-2\left\lfloor\frac{\log d}{\log\delta}\right\rfloor.
\end{equation*}
  \item Moreover, if $y\in\ol{\mqq}^*$, then 
there exists a constant $C>0$ such that for all projective variety $W\subset\mpp^n_{\mqq}$ of dimension $k < n+1-\frac{\log d}{\log\delta}$, one has the measure of algebraic independence~\eqref{annexe_theo_ia1_result} at $x=\left(1:\theta_1(y):\dots:\theta_n(y)\right)\in\mpp^n_{\mcc}$. In particular,
\begin{equation*}
    \trdeg_{\mqq}\mqq\left(\theta_1(y),\dots,\theta_n(y)\right)\geq n+1-\left\lfloor\frac{\log d}{\log\delta}\right\rfloor.
\end{equation*}
\end{enumerate}
\end{theorem}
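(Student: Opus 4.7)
The plan is to reduce the theorem to a direct application of Theorems~\ref{annexe_theo_ia1} (for part~(2)) and~\ref{annexe_theo_ia2} (for part~(1)). To do so, I will verify three things: that $\ull{f}=(\theta_1,\ldots,\theta_n)$ satisfies a system of the form~\eqref{systeme_1} with coefficients in $\ol{\mqq}[z]$, that the iterates $p^{[h]}(y)$ avoid the vanishing locus of $z\det A(z)$, and that the $\theta_i$ are algebraically independent over $\mcc(z)$.

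First, starting from $q_i(z)\theta_i(z)=\theta_i(p(z))+1$ and multiplying the $i$-th equation by $\prod_{j\ne i}q_j(z)$, set $a(z)=\prod_{j=1}^{n}q_j(z)$, $A(z)=\diag\bigl(\prod_{j\ne i}q_j(z)\bigr)_{i=1}^{n}$, and $B_i(z)=\prod_{j\ne i}q_j(z)$. The resulting system has precisely the shape~\eqref{systeme_1}, with polynomial coefficients in $\ol{\mqq}[z]$. Since $\det A(z)=\prod_{i=1}^{n}q_i(z)^{n-1}$, the condition that $p^{[h]}(y)$ is not a zero of $z\det A(z)$ is equivalent to the standing hypotheses $p^{[h]}(y)\ne 0$ and $q_i(p^{[h]}(y))\ne 0$ for all $h\in\mnn$ and all $i$.

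Second, for the algebraic independence of $\theta_1,\ldots,\theta_n$ over $\mcc(z)$, I will invoke the Kubota--T\"opfer type criterion (Corollary~6 of~\cite{ThTopfer1995}, ultimately resting on~\cite{Kubota1977}): the hypotheses $\deg q_i<d-1$, $|q_i(0)|>1$, pairwise distinctness of the $q_i$, together with $\delta\ge 2$, preclude any non-trivial polynomial relation among the $\theta_i$ over $\mcc(z)$, since such a relation would, after iterated substitution $z\mapsto p(z)$ and use of the functional equations, propagate into a contradiction with the growth rate of $q_i(p^{[h]}(z))$ in $h$. This is the one ingredient whose justification is not a routine manipulation of the definitions, and it constitutes the main obstacle; here it is handled by direct citation of the mentioned works.

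With these three ingredients in place, the hypotheses of Theorems~\ref{annexe_theo_ia1} and~\ref{annexe_theo_ia2} are satisfied for the data $(\theta_1,\ldots,\theta_n)$, $p$ and $y$. Part~(1) then follows from Theorem~\ref{annexe_theo_ia2} applied at $\ul{x}=(1:y:\theta_1(y):\cdots:\theta_n(y))\in\mpp^{n+1}_{\mcc}$, which gives the measure~\eqref{annexe_theo_ia2_result} and the stated transcendence-degree lower bound. Part~(2), under the additional assumption $y\in\ol{\mqq}^{*}$, follows from Theorem~\ref{annexe_theo_ia1} applied at $x=(1:\theta_1(y):\cdots:\theta_n(y))\in\mpp^{n}_{\mcc}$, giving the sharper bound on the range of admissible dimensions $k<n+1-\frac{\log d}{\log\delta}$ and the corresponding lower bound on the transcendence degree.
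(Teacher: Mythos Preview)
Your proposal is correct and follows the same route as the paper: the paper does not spell out a proof of Theorem~\ref{annexe_theo_theta} but simply states, just before the theorem, that one applies Theorems~\ref{annexe_theo_ia1} and~\ref{annexe_theo_ia2} to the family $(\theta_1,\dots,\theta_n)$ (and refers to Corollary~6 of~\cite{ThTopfer1995} for comparison). Your verification that the $\theta_i$ satisfy a system of the shape~\eqref{systeme_1}, that the iterates avoid the zeros of $z\det A(z)$, and that algebraic independence over $\mcc(z)$ holds via the Kubota--T\"opfer criterion, is exactly the checklist required to invoke those two theorems; the only minor imprecision is that the algebraic-independence input is not literally ``Corollary~6'' of~\cite{ThTopfer1995} (that is the result being improved) but rather an earlier lemma in the same paper, ultimately based on~\cite{Kubota1977}.
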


If $p(z)\in\ol{\mqq}(z)$ then we can infer from our Theorem~\ref{annexe_theo_2} the following result,
improving on Corollary~6 from~\cite{ThTopfer1995}.

\begin{theorem} \label{annexe_theo_theta_2}  Let us consider different polynomials $q_1,\dots,q_n\in\ol{\mqq}[z]$ such that $1\leq\deg q_i<d-1$ and $|q_i(0)|>1$, $i=1,\dots,n$, and let $p$ be in $\ol{\mqq}(z)$. Define $\theta_i$, $i=1,\dots,n$, by~\eqref{annexe_def_theta_i}. Let $p_1,p_2\in\ol{\mqq}[z]$ be respectively numerator and denominator of $p$ (i.e. $p(z)=\frac{p_1(z)}{p_2(z)}$) and let
$$
\max(2,\deg p_2)<\deg p_1=d.
$$
Let $y\in\ol{\mqq}^*$ satisfies $\lim_{h\rightarrow\infty}p^{[h]}(y)=0$, $q_i\left(p^{[h]}(y)\right)\ne 0$ and $p^{[h]}(y)\ne 0$ for all $h\in\mnn$, $i=1,\dots,n$. Then, there exists a constant $C>0$ such that for every variety $W\subset\mpp^n_{\mqq}$  of dimension $k < n\left(2-\frac{\log d}{\log\delta}\right)$ the following holds true.
\begin{equation*} \index{Mesure d'independance algebrique@Mesure d'ind\'ependance alg\'ebrique}
     \log\Dist(\ul{x},W)\geq -Ch(W)^{1+\frac{\log d-\log\delta}{(2n-k)\log\delta-n\log d}n}\deg(W)^{\frac{2n}{n\left(2-\frac{\log d}{\log\delta}\right)-k}},
\end{equation*}
where $x=\left(1:\theta_1(y):\dots:\theta_n(y)\right)\in\mpp^n_{\mcc}$.
In particular,
\begin{equation*}
    \trdeg_{\mqq}\mqq\left(\theta_1(y),\dots,\theta_n(y)\right)\geq n\left(2-\frac{\log d}{\log\delta}\right).
\end{equation*}
\end{theorem}
\begin{remark}
In~\cite{Ch1980} G.V.Chudnovsky introduced the notion of "normality" of $n$-uplets $(x_1,\dots,x_n)\in\mcc^n$. According to his definition, the $n$-tuple  $(x_1,\dots,x_n)$ is \emph{normal} if it has a measure of algebraic independence of the form $\exp(-Ch(P)\psi(d(P)))$, i.e. if for all polynomial $P\in\ol{\mqq}[X_1,\dots,X_n]\setminus\{0\}$ one has the estimate
\begin{equation}\label{mesure_tau}
    |P(x_1,\dots,x_n)|\geq \exp(-Ch(P)\psi(d(P))),
\end{equation}
where $C>0$ is a real constant and $\psi:\mnn\rightarrow\mrr^{+}$ is an arbitrary function. If one has the estimate~(\ref{mesure_tau}) with $\psi(d)=d^\tau$ for some constant $\tau$ one says that this $n$-tuple has a measure of algebraic independence of Dirichlet's type. In this situation one also defines Dirichlet's exponent to be the infimum of  $\tau$ admitted for $\psi(d)=d^{\tau}$ in~(\ref{mesure_tau}). In~\cite{Ch1980} G.V.Chudnovsky mentioned that for $n\geq 2$ the explicit examples of normal $n$-tuples are quite rare, despite the fact that almost all (in the sense of Lebesgue measure) $n$-tuples of complex numbers are normal. 

Th.T\"opfer constructed a family of examples of normal $n$-tuples with Dirichlet's exponent $2n+2$ (see~\cite[Theorem~1 and Corollary~4]{ThTopfer1995}).

Our theorems 
improve the Dirichlet's exponent to $n+1$ for a large subfamily of these examples and allow also to produce new examples of normal $n$-tuples (due to the condition~(\ref{annexe_chi_ind_condition_b}) of Lemma~\ref{annexe_chi_ind} and also because our result are applicable to the collections of Mahler functions $f_1(z),\dots,f_n(z)$ which are not algebraically independent).
\end{remark}

\section{An application to Diophantine approximation to Mahler numbers} \label{section_U_numbers}

In case if $d=\delta$, our measures of algebraic independence given in Theorems~\ref{annexe_theo_ia1} and~\ref{annexe_theo_2} are optimal in $h(W)$. This allows us to prove new results on Diophantine approximations to a single Mahler number. Indeed, we are able to infer from Theorem~\ref{annexe_theo_ia1} that Mahler numbers are not $U$-numbers, so improving previous results~\cite{AB2011,AC2006,BBC2013}. 

\hidden{
The principal point in the proof of the main result of this section, Theorem~\ref{corollary_U_numbers}, is removal of Mahler's condition. For this purpose, we use the method presented in~\cite[Section~5]{BBC2013}. In our case, the removal of Mahler's condition gives quite a voluminous proof. Because of this, we begin with Theorem~\ref{theorem_U_numbers}, which provides the result under Mahler's condition, however its proof shows how the result on $U$-numbers follows from Theorem~\ref{annexe_theo_2} modulo Mahler's condition. Then, in the proof of Theorem~\ref{corollary_U_numbers} we focus on the removal of Mahler's condition.
}
The principal hurdle in the proof of the main result of this section, Theorem~\ref{corollary_U_numbers}, is removal of Mahler's condition. For this purpose, we use the method invented in~\cite[Section~5]{BBC2013}.

We begin with Theorem~\ref{theorem_U_numbers}, which provides the result under Mahler's condition, however its proof shows the deduction of the result on $U$-numbers from Theorem~\ref{annexe_theo_2} (modulo Mahler's condition). Then, in the proof of Theorem~\ref{corollary_U_numbers} we focus on the removal of Mahler's condition.

Let us denote by $I_{\ul{f}(\alpha)}\subset\mqq[X_1,\dots,X_n]$ the ideal of relations (with rational coefficients) between complex numbers $f_1(\alpha),\dots,f_n(\alpha)$. It will be convenient for us to homogenize the polynomials from $I_{\ul{f}(\alpha)}$ by using an additional variable $X_0$, hence defining a homogeneous ideal $I^h_{\ul{f}}\subset\mqq(z)[X_0,X_1,\dots,X_n]$. Consequently, we denote by $\cZ(I^h_{\ul{f}})\subset\mpp^n$ the set of common zeros of the homogeneous ideal $I^h_{\ul{f}}$.
\begin{theorem} \label{theorem_U_numbers}
Let $n\in\mnn$ and let $f_1(z),\dots,f_n(z)$ be a solution to the system~\eqref{systeme_1}, where $p(z)\in\ol{\mqq}(z)$ verifies $\deg p(z)=\ordz p(z)$. Let $\alpha\in\ol{\mqq}\setminus\{0\}$ verifies
\begin{equation*}
    p^{[h]}(\alpha)\rightarrow 0
\end{equation*}
as $h\rightarrow\infty$ and no iterate $p^{[h]}(\alpha)$, $h\in\mzz_{\geq 0}$, is a zero of $z\det A(z)$.

Then the number $f_1(\alpha)$ is not a $U$-number.

More precisely, one of the following two complimentary options holds true:
\begin{enumerate}

\item \label{theorem_U_numbers_option_1} $f_1(\alpha)\in\ol{\mqq}$, hence not in the class $U$.

\item \label{theorem_U_numbers_option_2} $f_1(\alpha)\not\in\ol{\mqq}$, and then there exist a constant $C_4>0$ such that for all 
$P\in\mzz[X]$
we have
$$
\left|P(f_1(\alpha))\right|\geq \exp\left(-C_4\left(h(P)+1\right)\deg(P)^{2n+1}\right).
$$
\end{enumerate}
\end{theorem}
\begin{proof}
The case~\ref{theorem_U_numbers_option_1} in the statement is trivial, so in the proof we will focus on the case~\ref{theorem_U_numbers_option_2}. Hence we assume in what follows
\begin{equation}~\label{hypothesis_not_algebraic}
f_1(\alpha)\not\in\ol{\mqq}.
\end{equation}

\hidden{
Let $t$ be defined by~\eqref{intro_trdeg}. It is well known that Mahler functions either belong to $\ol{\mqq}(z)$, either are transcendental over this field~\cite{Ni1996} (moreover, recently it has been proved that Mahler functions are \emph{hypertranscendental},  see~\cite{DHR2015}). So  assumption~\eqref{hypothesis_not_rational} implies that $f_1(z)\not\in\ol{\mqq(z)}$, hence $t\geq 1$. Moreover, we can assume, up to reindexing $f_i$, $i=2,\dots,n$, that $f_1(z),\dots,f_t(z)$ are algebraically independent.
}

Consider the ideal of the ring $\Q[X_1,\dots,X_n]$ defined by
$$
I_{\ul{f}(\alpha)}=\left\{ Q\in\Q[X_1,\dots,X_n] \mid Q(\ul{f}(\alpha)=0) \right\}.
$$
This ideal is prime, because it is the preimage of the zero ideal under the map
$$
\begin{aligned}
\Q[X_1,\dots,X_n]&\rightarrow\C,
\\
Q(X_1,\dots,X_n)&\to Q(f_1(\alpha),\dots,f_n(\alpha)).
\end{aligned}
$$
Furthermore, we have $\dim I_{\ul{f}(\alpha)}=t$, where $t$ is defined by~\eqref{intro_trdeg} (it follows, for example, from Theorem~\ref{annexe_theo_ia1}).

Let $P(X)\in\mzz[X]$. Consider the variety $W_{P}=\cZ(I_{\ul{f}(\alpha)}^h)\cap\cZ(P(X_1))\subset\mpp^n_{\mqq}$. As we consider only the values $f_1(\alpha)$ verifying~\eqref{hypothesis_not_algebraic}, we have $P\not\in I_{\ul{f}(\alpha)}$, by definition of $I_{\ul{f}(\alpha)}$. So necessarily $\dim W_P=t-1$
\hidden{
We claim that the dimension of $W_{P}$ is $t-1$,
\begin{equation} \label{theorem_U_numbers_dim_W_is_t-1}
\dim W_{P}=t-1.
\end{equation}

To see this, first note that the ideal $I_{\ul{f}}$ is prime: indeed, it is the preimage of the zero ideal under the map
$$
\begin{aligned}
\ol{\mqq}(z)[X_1,\dots,X_n]&\rightarrow\ol{\mqq}(z),
\\
P(X_1,\dots,X_n)&\to P(f_1(z),\dots,f_n(z)).
\end{aligned}
$$
Thus if $\dim W_{P}=t$, then necessarily the polynomial $P(X_1)$ belongs to $I_{\ul{f}}$, 
hence $P(f_1)=0$, but this contradicts assumption~\eqref{hypothesis_not_rational}. This contradiction proves~\eqref{theorem_U_numbers_dim_W_is_t-1}.
}
and then we can apply Theorem~\ref{annexe_theo_2} with $k=t-1$ and $d=\delta$. This theorem gives us the lower bound~\eqref{annexe_theo_2_result}, which specializes in our case to
\begin{equation} \label{theorem_U_numbers_lb_Dist}
\log\Dist(\ul{x},W_P)\geq -C_1h(W_P)\deg(W_P)^{2t},
\end{equation}
where $C_1>0$ is a constant.

Note that \cite[Proposition~4.11,~1), and~2)]{NP} implies
\begin{equation} \label{theorem_U_numbers_relations_Variety}
\begin{aligned}
\deg W_{P} &\leq \deg(I_{\ul{f}(\alpha)})\deg(P),
\\
h(W_{P}) &\leq h(P)\deg(I_{\ul{f}(\alpha)})+\deg(P)\left(h(I_{\ul{f}(\alpha)})+C'\deg(I_{\ul{f}})\right),
\end{aligned}
\end{equation}
where $C'>0$ is a constant (actually,~\cite[Proposition~4.11,~2]{NP} provides quite a simple explicit value for $C'$, but we don't need it here).

Furthermore, it follows from~\cite[Proposition~4.11,~3)]{NP} that
\begin{equation} \label{theorem_U_numbers_polynomial_estimate}
\log\left|P(f_1(\alpha))\right|\geq \log\Dist(\ul{x},W_P)-h(P)\deg(I_{\ul{f}(\alpha)})-\deg(P)\left(h(I_{\ul{f}(\alpha)})+C_2\deg(I_{\ul{f}})\right),
\end{equation}
where $C_2>0$ is one more constant.

Finally, we deduce
from~\eqref{theorem_U_numbers_lb_Dist}, \eqref{theorem_U_numbers_relations_Variety} and~\eqref{theorem_U_numbers_polynomial_estimate} that
\begin{equation} \label{theorem_U_number_final}
\log\left|P(f_1(\alpha))\right|\geq C_3 
\log\Dist(\ul{x},W_P)\geq -C_4\left(h(P)\deg(P)^{2t}+\deg(P)^{2t+1}\right),
\end{equation}
where $C_3, C_4>0$ are some constants.

This completes the proof.
\end{proof}

\hidden{
\begin{remark}
Note that in the 
classical framework of Mahler's function we can replace the assumption~\eqref{hypothesis_not_rational} by a weaker one,
$$
f_1(z)\not\in\ol{\mqq}(z).
$$
Indeed, it is well known that Mahler functions are either rational or transcendental. In fact, the rigidity in this case is even stronger, it has been proved that Mahler functions are \emph{hypertranscendental},  see~\cite{DHR2015}.

It seems that the methods used to prove this dichotomy for the classical Mahler functions should be extendibles to a more general framework of systems of the type~\eqref{systeme_1} considered in this paper. We plan to return to this topic in our future publications.
\end{remark}
}

\begin{theorem}
\label{corollary_U_numbers}
Let $n\in\mnn$ and let $f_1(z),\dots,f_n(z)$ be a solution to the system~\eqref{systeme_1}, where $p(z)\in\ol{\mqq}(z)$ verifies $\deg p(z)=\ordz p(z)$. Let $\alpha\in\ol{\mqq}\setminus\{0\}$ verifies
\begin{equation} \label{corollary_U_numbers_p_convergence}
p^{[h]}(\alpha)\rightarrow 0
\end{equation}
as $h\rightarrow\infty$. 

Then the number $f_1(\alpha)$ is not a $U$-number.

More precisely, one of the following two complimentary options holds true
\begin{enumerate}

\item $f_1(\alpha)\in\ol{\mqq}$, hence not in the class $U$.

\item $f_1(\alpha)\not\in\ol{\mqq}$, and then there exist constants $C_5,T>0$ such that for all non-zero 
$P\in\mzz[X]$
we have
\begin{equation} \label{corollary_U_numbers_conclusion}
\left|P(f_1(\alpha))\right|\geq \exp\left(-C_5\left(h(P)+1\right)\deg(P)^{T}\right).
\end{equation}
\end{enumerate}
\end{theorem}
\begin{proof}
The only thing we need to address additionally to the proof of Theorem~\ref{theorem_U_numbers} is the removal of Mahler's condition, that is the hypothesis that no iterate $p^{[h]}(\alpha)$, $h\in\mzz_{\geq 0}$, is a zero of $z\det A(z)$.

To remove Mahler's condition, note first that we can assume without loss of generality that $f_n(z)=1$ and that $\ul{f}(z)$ verifies
\begin{equation} \label{systeme_2}
\tilde{a}(z)\ul{f}(z)=\tilde{A}(z)\ul{f}(p(z)),
\end{equation}
where $\tilde{A}(z)$ is an $n\times n$ matrix with coefficients from $\ol{\mqq}[z]$, $\tilde{a}(z)\in\ol{\mqq}[z]$ and $\det\tilde{A}(z)$ is a non-zero polynomial. Indeed, if the function $1$ belongs to the linear span of $f_1(z),\dots,f_n(z)$ over $\ol{\mqq}(z)$ then~\eqref{systeme_2} is just another form of~\eqref{systeme_1}. Otherwise, if $1$ does not belong to the linear span of $f_1(z),\dots,f_n(z)$ over $\ol{\mqq}(z)$, then we can add function $1$ to the collection $\ul{f}$, increasing the dimension $n$ by 1.

Secondly, we claim that we can assume without loss of generality that
\begin{equation} \label{system_denominator_does_not_vanish}
\tilde{a}(p^{[k]}(\alpha))\ne 0,\quad k\in\mzz_{\geq 0}.
\end{equation}
To justify this claim, we use the method from~\cite[Section~5]{BBC2013}. Indeed, in any case we can factorize $\tilde{a}(z)=\gamma z^M \beta(z)$, where $\gamma\in\ol{\mqq}$, $M\in\mzz_{\geq 0}$, $\beta(z)\in\ol{\mqq}[z]$ and $\beta(0)=1$. Then, define
$$
\ul{g}(z):=\prod_{k=0}^{\infty}\beta(p^{[k]}(z))\ul{f}(z)
$$
(it is clear that the infinite product $\prod_{k=0}^{\infty}\beta(p^{[k]}(z))$ converges in $z$-adic valuation, and also it converges in the usual archimedean norm when specialized by $z=a$, where $a$ verifies the condition~\eqref{corollary_U_numbers_p_convergence}).

Because of~\eqref{systeme_2}, we have the following functional system for $\ul{g}(z)$:
\begin{equation} \label{systeme_3}
\gamma z^M \ul{g}(z)=\tilde{A}(z)\ul{g}(p(z)).
\end{equation}
Moreover, differentiating~\eqref{systeme_3} we find
$$
\gamma z^M \begin{pmatrix}
\ul{g}(z)\\
\ul{g}'(z)
\end{pmatrix}
=
\begin{pmatrix}
\tilde{A}(z) & 0_{n\times n}\\
* & p'(z) \tilde{A}(z)
\end{pmatrix}
\begin{pmatrix}
\ul{g}(p(z))\\
\ul{g}'(p(z))
\end{pmatrix}.
$$
More generally, introducing the notation, for $s\in\mnn$,
$$
\ul{g}_s(z)=\begin{pmatrix}
\ul{g}(z)\\
\vdots\\
\ul{g}^{(s)}(z)
\end{pmatrix},
$$
we have
\begin{equation} \label{systeme_35}
\gamma z^M \ul{g}_s(z)
=
\tilde{B}(z)
\ul{g}_s(z^d),
\end{equation}
where
$$
\tilde{B}(z)=\begin{pmatrix}
\tilde{A}(z)& 0 & \dots & 0 \\
* & p'(z) \tilde{A}(z) & & \vdots\\
\vdots & \ddots & \ddots & \vdots \\
* & \cdots & * & \left(p'(z)\right)^s \tilde{A}(z)
\end{pmatrix}.
$$

As $\beta(z)$ is a non-zero polynomial, there exists a real $u>0$ such that $\beta(z)\ne 0$ for all $0<|z|<u$. Then, there exists $N,s\in\mzz_{\geq 0}$ and $T(z)\in\ol{\mqq}$, depending on $\tilde{a}(z)$ and $\alpha\in\ol{\mqq}$, $0<|\alpha|<1$ only, such that $\beta(p^{[k]}(\alpha))\ne 0$ for all $k\geq N+1$, $T(\alpha)\ne 0$ and
$$
\prod_{k=0}^{N}\beta(p^{[k]})=\left(z-\alpha\right)^sT(z).
$$
If $s=0$, then we readily have~\eqref{system_denominator_does_not_vanish}. Otherwise, by L'H\^opital's rule we have
$$
\ul{f}(\alpha)=\frac{\ul{g}^{(s)}(\alpha)}{s!T(\alpha)\prod_{k\geq N+1}\beta(p^{[k]}(\alpha))}.
$$
Next, define functions $L_{i,j}(z)$, $1\leq i\leq n$, $0\leq j\leq n$, by
$$
L_{i,j}(z):=\frac{g_i^{(j)}(z)}{s!T(\alpha)\prod_{k\geq N+1}\beta(p^{[k]}(\alpha))}
$$
and denote by $\ul{L}(z)$ the vector function with coordinates $1\leq i\leq n,0\leq j\leq s$.

The system~\eqref{systeme_35} implies
\begin{equation} \label{systeme_4}
\gamma z^L \beta(p^{[N]}(z)) \ul{L}(z)
=
\tilde{B}(z)
\ul{L}(p(z)).
\end{equation}
Now, the system~\eqref{systeme_4} is of the same shape as~\eqref{systeme_2}. Moreover, $\gamma z^L \beta(p^{[N]}(z))$ does not vanish on $p^{[k]}(\alpha)$ for any $k\in\mnn$, by the choice of $N$. Finally, we have
\begin{equation} \label{f_eq_L}
f_1(\alpha)=L_{1,s}(\alpha),
\end{equation}
so to prove our 
theorem
we can consider system~\eqref{systeme_4} in place of system~\eqref{systeme_2}. Up to changing notations, considering system~\eqref{systeme_4}  boils down to considering~\eqref{systeme_2} with the additional assumption~\eqref{system_denominator_does_not_vanish}. 

Furthermore, the theorem trivially holds true if $f_1(\alpha)\in\ol{\mqq}$, that is in the case~1 of the statement of the theorem. So, we need only to treat the case~2 of the statement of the theorem, hence assuming additionally
\begin{equation} \label{f_not_in_algebraic}
f_1(\alpha)\not\in\ol{\mqq}.
\end{equation}
Then, the combination of~\eqref{f_eq_L} and~\eqref{f_not_in_algebraic} implies
$$
L_{1,s}(\alpha)\not\in\ol{\mqq},
$$
and then necessarily
\begin{equation} \label{L_1s_not_in_alg_Qz}
L_{1,s}\not\in\ol{\mqq(z)}.
\end{equation}

The net outcome of our considerations above in this proof is that it is enough to prove our theorem for the system~\eqref{systeme_2} with additional assumption~\eqref{system_denominator_does_not_vanish}, which we assume until the end of this proof.

Further, note that $\det\tilde{A}(z)$ is a non-zero polynomial, hence there exists a real $v>0$, depending on $\det\tilde{A}(z)$ only (hence on system~\eqref{systeme_1} only), such that $\det\tilde{A}(z)$ does not vanish for all $0<|z|<v$.

Let $K\in\N$ be the minimal integer such that $\left|p^{[K]}\right|\in (0,u)$. The functional system~\eqref{systeme_2} implies
\begin{equation} \label{corollary_U_numbers_split}
\prod_{k=0}^{K-1}\tilde{a}(p^{[k]}(z))\ul{f}(z)=\prod_{k=0}^{K-1}\tilde{A}(p^{[k]}(z))\ul{f}(p^{[K]}(z)).
\end{equation}
In particular, for some $c_1(z),\dots,c_n(z)\in\ol{\mqq}[z]$ we have
$$
\prod_{k=0}^{K-1}\tilde{a}(p^{[k]}(z))f_1(z)=\sum_{k=1}^nc_k(z)f_k(p^{[K]}(z)).
$$
So, if $f_1(\alpha)\not\in\ol{\mqq}$, then necessarily
$$
\sum_{k=1}^n\frac{c_k(\alpha)}{\prod_{k=0}^{K-1}\tilde{a}(p^{[k]}(\alpha))}f_k(p^{[K]}(z))
$$
is a non-zero function, moreover transcendental over $\mqq(z)$. Thus, for any polynomial $P(X)\in\ol{\mqq}[X]$, the polynomial
$$
P\left(\sum_{k=1}^n\frac{c_k(\alpha)}{\prod_{k=0}^{K-1}\tilde{a}(p^{[k]}(\alpha))}X_k\right)
$$
does not belong to the ideal $I_{\ul{f}}$. So, similarly to the proof of Theorem~\ref{theorem_U_numbers}, we deduce that the variety $\cZ\left(P\left(\sum_{k=1}^nc_k(\alpha)X_k\right)\right)\cap\cZ\left(I_{\ul{f}}\right)$ has dimension $t-1$. The rest of the proof is the same as the proof of Theorem~\ref{theorem_U_numbers}.

So, we apply Theorem~\ref{annexe_theo_2} 
and then use~\eqref{theorem_U_numbers_relations_Variety} to find, similarly to~\eqref{theorem_U_number_final}, that for any $y\in\ol{\mqq}$ we have
$$
\left|P\left(\sum_{k=1}^n\frac{c_k(\alpha)}{\prod_{k=0}^{K-1}\tilde{a}(p^{[k]}(\alpha))}f_k(y)\right)\right|\geq\exp\left(-C_3\left(h(P)\deg(P)^{2\tilde{t}}+\deg(P)^{2\tilde{t}+1}\right)\right),
$$
where $\tilde{t}$ is equal to the degree of transcendence, over $\ol{\mqq}(z)$, of the set of functions $L_{i,j}(z)$, $1\leq i\leq n$, $0\leq j\leq s$, introduced above. Finally, we substitute $y=p^{[K]}(\alpha)$ and use~\eqref{corollary_U_numbers_split} to get the conclusion~\eqref{corollary_U_numbers_conclusion}.
\end{proof}
\begin{remark}
Our lower bound is better, asymptotically in $\deg(P)$, than the one given in~\cite[Theorem~8.1]{BBC2013}. As for the constant $C_3$, in principle its value can be tracked through this our article and the article~\cite{EZ2013_2} (the latter article gives the constant for Theorem~\ref{theoNishioka}, which influences the constant $C$ in~\eqref{annexe_theo_2_result}, hence the constant $C_3$). It has to be said that this tracking is a very laborious work and the final value of constant $C_3$ will be huge, in particular, far much bigger than the multiplicative constants given in~\cite{BBC2013}. 
\end{remark}
\begin{remark}
In earlier versions of this text, the author erroneously claimed Theorems very similar to Theorems~\ref{theorem_U_numbers} and~\ref{corollary_U_numbers} from the present text, but with the case~1 replaced by a more restrictive version $f_1(z)\in\ol{\Q}(z)$ (such claims, if they have had been true, would have given a slightly sharper dichotomy for Mahler's functions). The intended proofs were very similar to the ones from the present version of the text, with some evident adjustments: for example, the ideal of relations between functions $I_{\ul{f}}$ was used in place of $I_{\ul{f}(\alpha)}$ etc. However, that line of reasoning contained a flaw related to the fact that for an idel $I$ defined over $\ol{\Q}(z)$ the specialization given by $z\to\alpha\in\ol{\Q}$ may lead to an ideal defined over $\ol{\Q}$ of another dimension.

The author is indebted to Boris Adamczewski for bringing his attention to the point that there exist Mahler functions with algebraic coefficients taking algebraic values at an algebraic point $\alpha$, even in the case if $\alpha$ verifies Mahler's condition. So dichotomies between cases~1 and~2 in  Theorems~\ref{theorem_U_numbers} and~\ref{corollary_U_numbers} in fact can not be sharpened by replacing the case~1 by $f_1(z)\in\ol{\Q}(z)$.

The following example is taken from~\cite[Example~25]{PP_2011}.

Let $a\geq 2$ be an integer and let $f(z)=\sum_{k=0}^{\infty}z^{a^k}$. Define functions
\begin{eqnarray*}
f_1(z)&:=&(z-\frac12)f(z),\\
f_2(z)&:=&z f(z).
\end{eqnarray*}
The functions $1$, $f_1(z)$ and $f_2(z)$ form a solution to a Mahler type system of functional equations 
\begin{equation*}
\begin{pmatrix}
1\\f_1(z)\\f_2(z)
\end{pmatrix}
=\begin{pmatrix}
1 & 0 & 0\\
-z\cdot (z^a-1/2) & 1 & z^{a-1}-1\\
-z^{a+1} & 0 & z^{a-1}
\end{pmatrix}^{-1}
\begin{pmatrix}
1\\f_1(z^a)\\f_2(z^a)
\end{pmatrix}.
\end{equation*}
One can directly check that the point $z=1/2$ verifies Mahler's condition for this functional system. At the same time, $f_1\left(\frac12\right)=0$ while $f_1(z)$ is a transcendental function (it is evident, for example, from the fact that the unit circle is the natural boundary for this series). So, $f_1(z)\not\in\ol{\Q}(z)$, however its value at the point $z=1/2$, $f_1\left(\frac12\right)=0$, is of course algebraic, hence no general estimate of the type given in the case~2 of Theorem~\ref{theorem_U_numbers} is possible.
\end{remark}

\section{Criterion for algebraic independence.} \label{section_cai}


In this section we elaborate a criterion for algebraic independence, Theorem~\ref{CplM}, adapted to our situation. We deduce it from a general result~\cite[Theorem~5.1]{J1996}, see Theorem~\ref{critereJadot} below.

In what follows, we use the notions of degree and height of a projective variety as well as the distance from a point of a projective space to its subvariety. These notions are defined in~\cite[Chapters~5 and~6]{NP}.

\smallskip

\begin{theorem} \label{critereJadot} (Particular case of~\cite[Theorem~5.1]{J1996}) Let $K$ be a number field, 
$k$ an integer from $[0,n]$ and $\ul{\theta}=(\theta_0,\dots,\theta_n)\in\mcc^{n+1}$.

Let $\mu\geq 0$, $\sigma,\delta\geq 1$, $\tau\geq 0$ and $U$  be real numbers.

Assume that the following objects exist.

\begin{itemize}
  \item A strictly increasing sequence of real numbers $(S_i)_{0\leq i\leq l}$ satisfying
  \begin{equation} \label{critereJadot_restriction_1}
    0<S_0\leq\tau+\log 2 \text{ and } S_{l-1}<U\leq S_{l},
  \end{equation}
  \item for $1\leq i\leq l$, a homogeneous polynomial $Q_i\in K[X_0,\dots,X_n]$ such that
  \begin{enumerate}
    \item $\deg Q_i=\delta$,
    \item $h(\omega_{\delta}^*(Q_i))\leq\tau$, 
    where 
    $\omega_{\delta}^*(Q_i)$ denotes the polynomial $$\sum_{|\ul{\alpha}|=d}\binom{d}{\ul{\alpha}}^{-\frac12}a_{|\ul{\alpha}|}\ul{X}^{|\ul{\alpha}|}$$ for $Q_i=\sum_{|\ul{\alpha}|=d}a_{|\ul{\alpha}|}\ul{X}^{|\ul{\alpha}|}$,
    \item $\frac{|Q_i(\ul{\theta})|}{|\ul{\theta}|^{\deg Q_i}}\leq\ee^{-S_i}$, 
    \item the polynomial $Q_i$ has no zeros in the ball $B\left(\ul{\theta},\ee^{-(S_{i-1}+\mu)\sigma}\right)$ of $\mpp_n(\mcc)$ with centre at $\ul{\theta}$ and of radius $\ee^{-(S_{i-1}+\mu)\sigma}$. 
  \end{enumerate}
\end{itemize}

Let $\mathcal{I}\subset K[X_0,\dots,X_n]$ be a homogeneous ideal of dimension $k$.

Define $t_{\tau,\ul{d}}\eqdef h_{\ul{d}}(\mathcal{I})+(\tau+(k+1)\delta\log(n+1))\deg_{\ul{d}}(\mathcal{I})$ o\`u $\ul{d}=(\delta,\dots,\delta)\in\mnn^{k+1}$
 and assume that the following condition is realized
  \begin{equation}\label{critereJadot_codition_1}
    \sigma^{k+1}\left(\frac{[K:\mqq]}{n_{\infty}}t_{\tau,\ul{d}}(\mathcal{I})+\left(\mu+\log 2+\frac{\log\delta}{2\delta^k}\right)\deg_{\ul{d}}\mathcal{I}\right) \leq U,
  \end{equation}
  where $n_{\infty}$ denotes the index of ramification of the valuation $|\cdot|_{\infty}$.
  Then,
  \begin{equation*}
    \log\Dist\left(\ul{\theta},\V(\mathcal{I})\right)\geq-U.
  \end{equation*}
\end{theorem}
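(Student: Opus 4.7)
The plan is to deduce Theorem~\ref{critereJadot} as a direct specialization of Theorem~5.1 of~\cite{J1996}. The general statement in Jadot's paper is formulated in the multiprojective setting $\mpp^{n_0}\times\cdots\times\mpp^{n_r}$ with a multidegree $\ul{d}=(d_1,\dots,d_{k+1})$ and over a number field $K$ equipped with its full set of places; the particular case above corresponds to the single-factor case ($r=0$, $n_0=n$), the uniform multidegree $\ul{d}=(\delta,\dots,\delta)$, and restriction to a single archimedean place $|\cdot|_{\infty}$ whose ramification index is $n_{\infty}$. The first step is therefore to rewrite the hypotheses in the dictionary of the general theorem.

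Next I would identify the parameters. The sequence $(S_i)$ plays exactly the same role as in~\cite{J1996}: it records the logarithmic approximation of $Q_i$ at $\ul{\theta}$, and the constraints~\eqref{critereJadot_restriction_1} ensure that $S_0$ is small enough for the initial polynomial to be informative while $S_l$ is at least the target exponent $U$. The parameters $\sigma, \delta, \tau, \mu$ transfer directly: $\sigma$ controls the radius $\ee^{-(S_{i-1}+\mu)\sigma}$ of the ball on which $Q_i$ is required to be non-vanishing, $\delta$ is the common degree, and $\tau$ the common bound on the normalized Jadot--Philippon height $h(\omega_{\delta}^*(Q_i))$, where the factor $\binom{d}{\ul{\alpha}}^{-1/2}$ is precisely the Gram--normalization used in~\cite{J1996}. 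The key verification is then that the left-hand side of~\eqref{critereJadot_codition_1} coincides, under these identifications, with the hypothesis of~\cite[Theorem~5.1]{J1996}: the factor $[K:\mqq]/n_{\infty}$ is the usual conversion between the global height $h_{\ul{d}}(\mathcal{I})$ and its local counterpart at $|\cdot|_{\infty}$, while the summand $(\mu+\log 2+\log\delta/(2\delta^k))\deg_{\ul{d}}\mathcal{I}$ absorbs the normalization constants coming from the Chow form and from the comparison between $\omega_{\delta}^*$ and the naive Weil height.

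Having done this, the conclusion $\log\Dist(\ul{\theta},\V(\mathcal{I}))\geq -U$ is precisely what~\cite[Theorem~5.1]{J1996} delivers under its hypotheses. No additional argument should be needed beyond careful bookkeeping of the multi-height and multi-degree conventions, together with the observation that $\deg_{\ul{d}}\mathcal{I}$ and $h_{\ul{d}}(\mathcal{I})$ for the uniform multidegree reduce to the ordinary projective degree and height of $\mathcal{I}$ up to factors that are already built into the constant $t_{\tau,\ul{d}}$.

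The main obstacle will be precisely this reconciliation of normalizations. The height convention of~\cite{J1996} involves both local contributions at all places of $K$ and Gram factors on the polynomials; collapsing these down to the single-place, uniform-multidegree setup of Theorem~\ref{critereJadot} requires tracking several combinatorial constants, and one has to be careful that they all end up absorbed into the allowed slack provided by $\mu+\log 2+\log\delta/(2\delta^k)$ and by the extra summand $(\tau+(k+1)\delta\log(n+1))\deg_{\ul{d}}(\mathcal{I})$ in $t_{\tau,\ul{d}}$. Once this check is carried out, no further intersection-theoretic work is necessary, as the descending induction on $\dim \mathcal{I}$ (combining $\mathcal{I}$ with successive $Q_i$ via an arithmetic B\'ezout inequality in the spirit of Chapter~7 of~\cite{NP}, and terminating with a Liouville lower bound at dimension~$0$) is already contained in the proof of~\cite[Theorem~5.1]{J1996}.
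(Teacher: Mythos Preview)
Your proposal is correct and matches the paper's approach: the paper's own proof is a single sentence stating that this is Theorem~5.1 of~\cite{J1996} with $\kappa=\tau$ and condition~$B$ applied at the archimedean absolute value. Your write-up expands on the bookkeeping (identifying $r=0$, the uniform multidegree, the single place), but the logical content is the same direct specialization you describe.
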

\begin{proof}
This is Theorem~5.1 of~\cite{J1996} with $\kappa=\tau$ and the condition $B$ applied to the absolute archimedean value.
\end{proof}

Corollary~\ref{corTheseJadot59} below is a straightforward generalization of Corollary~5.9 from~\cite{J1996} to the case of an arbitrary number field $K$.
\begin{corollary}\label{corTheseJadot59} (see Corollary~5.9 in~\cite{J1996}) Let $K$ be a number field,
$k$ an integer from the range to $[0,n]$ and $\ul{\theta}=(1,\theta_1,\dots,\theta_n)\in\mcc^{n+1}$.

Let $\delta$, $\tau$, $\sigma$ and $U$ be real numbers satisfying $\sigma,\delta\geq 1$ and $U>\tau\geq 3(k+1)\delta\log(n+1)$.

Assume that for every real $s$ verifying $\tau<s\leq U$, there exists a polynomial $R_s$ from $K[X_1,\dots,X_n]$ such that
\begin{itemize}
  \item $\deg R_s \leq \delta$,
  \item $h\left(\omega_{\deg R_s}^*(R_s)\right)\leq\tau$,
  \item $\ee^{-\sigma s+2\tau}\leq\frac{|R_s(\ul{\theta})|}{\left(1+|\ul{\theta}|^2\right)^{\frac{\deg R_s}{2}}}\leq\ee^{-s}$
\end{itemize}
 Then for every homogeneous ideal $\mathcal{I}\subset K[X_0,\dots,X_n]$ of dimension $k$, of degree $D$ and of height $H$ satisfying
 \begin{equation} \label{corTheseJadot59_hyp_ie_main}
    2\delta^k[K:\mqq]\left(\delta H+\tau D\right)\leq\frac{U}{\sigma^{k+1}}
 \end{equation}
 we have
 \begin{equation*}
    \log\Dist\left(\ul{\theta},\V(\mathcal{I})\right)\geq-U.
 \end{equation*}
\end{corollary}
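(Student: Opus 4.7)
\textit{Proof proposal.}
The plan is to deduce Corollary~\ref{corTheseJadot59} from Theorem~\ref{critereJadot} by packaging the one-parameter family $\{R_s\}_{\tau<s\leq U}$ into a finite sequence of homogeneous polynomials $Q_i$ of degree \emph{exactly} $\delta$ fulfilling the four conditions of that theorem. For each $s$ I would let $R_s^{\mathrm{hom}}\in K[X_0,\dots,X_n]$ denote the homogenization of $R_s$ and set $\widetilde{R}_s := X_0^{\delta-\deg R_s}R_s^{\mathrm{hom}}$, which is homogeneous of degree exactly $\delta$. Since $\theta_0=1$, the evaluations $\widetilde{R}_s(\ul{\theta})$ and $R_s(\theta_1,\dots,\theta_n)$ coincide, and up to factors bounded in terms of $n$ and $\delta$ the quantity $|\widetilde{R}_s(\ul{\theta})|/|\ul{\theta}|^{\delta}$ matches $|R_s(\ul{\theta})|/(1+|\ul{\theta}|^2)^{\deg R_s/2}$. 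A direct check of how the $\omega_\delta^*$-normalization interacts with homogenization and with multiplication by a power of $X_0$ shows that $h(\omega_\delta^*(\widetilde{R}_s))$ differs from $h(\omega_{\deg R_s}^*(R_s))$ by at most a term of order $\delta\log(n+1)$, which the assumption $\tau\geq 3(k+1)\delta\log(n+1)$ is precisely designed to absorb.

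Next I would build the strictly increasing sequence $(S_i)$ recursively. Set $S_0:=\tau+\log 2$ so that the left half of~\eqref{critereJadot_restriction_1} holds. Given $S_{i-1}$, I would choose $s_i\leq U$ to be the largest real for which the non-vanishing of $\widetilde{R}_{s_i}$ on $B\bigl(\ul{\theta},\ee^{-\sigma(S_{i-1}+\mu)}\bigr)$ can be deduced from the two-sided estimate on $|R_{s_i}(\ul{\theta})|$, and set $S_i:=s_i$. The lower bound $|R_{s_i}(\ul{\theta})|\geq\ee^{-\sigma s_i+2\tau}(1+|\ul{\theta}|^2)^{\deg R_{s_i}/2}$, combined with an elementary Lipschitz estimate of the form
\begin{equation*}
|P(x)-P(\ul{\theta})|\leq \delta\,\|\omega_\delta^*(P)\|\,\max(1,\|\ul{\theta}\|)^{\delta-1}\,|x-\ul{\theta}|
\end{equation*}
applied to $P=\widetilde{R}_{s_i}$, yields the needed non-vanishing provided $\sigma\mu$ exceeds $\tau$ plus a term of order $\delta\log(n+1)$, which is again admissible under the standing lower bound on $\tau$. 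The matching upper bound $|R_{s_i}(\ul{\theta})|\leq\ee^{-s_i}(1+|\ul{\theta}|^2)^{\deg R_{s_i}/2}$ then gives $|Q_i(\ul{\theta})|/|\ul{\theta}|^\delta\leq\ee^{-S_i}$ with $Q_i:=\widetilde{R}_{s_i}$, and the recursion terminates the first time $s_l\geq U$, producing a finite sequence with $S_{l-1}<U\leq S_l$.

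With all four itemized hypotheses of Theorem~\ref{critereJadot} verified for $\mu$ of order $\tau/\sigma$, it remains to check that the arithmetic inequality~\eqref{critereJadot_codition_1} is a consequence of~\eqref{corTheseJadot59_hyp_ie_main}. For $\ul{d}=(\delta,\dots,\delta)\in\mnn^{k+1}$ and a homogeneous ideal $\mathcal{I}$ of ordinary dimension $k$, degree $D$ and height $H$, the standard comparisons give $\deg_{\ul{d}}\mathcal{I}=\delta^k D$ and $h_{\ul{d}}(\mathcal{I})\leq\delta^k\bigl(H+O(D\log(n+1))\bigr)$. Substituting into~\eqref{critereJadot_codition_1} and absorbing the resulting logarithmic corrections into the $\tau D$ term (using $\tau\geq 3(k+1)\delta\log(n+1)$), condition~\eqref{critereJadot_codition_1} reduces to $2\delta^k[K:\mqq](\delta H+\tau D)\leq U/\sigma^{k+1}$, which is exactly~\eqref{corTheseJadot59_hyp_ie_main}. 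Theorem~\ref{critereJadot} then delivers $\log\Dist(\ul{\theta},\V(\mathcal{I}))\geq -U$.

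The main obstacle is the non-vanishing step: passing from the pointwise lower bound $|R_{s_i}(\ul{\theta})|\geq\ee^{-\sigma s_i+2\tau}(\cdots)$ to the absence of zeros of $\widetilde{R}_{s_i}$ in a ball of radius $\ee^{-\sigma(S_{i-1}+\mu)}$ requires a Lipschitz estimate sharp enough in the $\omega^*$-normalized height, with the constants $\sigma,\mu,\tau$ fine-tuned so that the exponents $\sigma s_i-2\tau$ and $\sigma(S_{i-1}+\mu)$ match. A secondary, more routine difficulty is the book-keeping of the $\delta\log(n+1)$ corrections arising both from homogenization and from the passage from the multihomogeneous to the ordinary invariants of $\mathcal{I}$; it is precisely to neutralize these that the numerically explicit bound $\tau\geq 3(k+1)\delta\log(n+1)$ is imposed.
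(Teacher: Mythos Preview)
Your overall strategy is the same as the paper's: homogenize the $R_s$ to degree $\delta$, build from them a finite sequence $(Q_i)$ satisfying the four itemized hypotheses of Theorem~\ref{critereJadot}, and then reduce~\eqref{critereJadot_codition_1} to~\eqref{corTheseJadot59_hyp_ie_main} using $\tau\geq 3(k+1)\delta\log(n+1)$. The verification of the arithmetic condition in your last paragraph is essentially what the paper does.

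Where you diverge from the paper, and where your write-up becomes muddled, is in the construction of the sequence $(S_i)$ and the choice of~$\mu$. You define the $S_i$ by a maximization at each step and end up claiming that $\mu$ must be ``of order $\tau/\sigma$''. The paper's construction is far simpler and avoids this: it fixes
\[
\mu \;=\; 2(k+1)\delta\log(n+1)-\log 2-\frac{\log\delta}{2\delta^{k}}
\]
once and for all, and then takes the \emph{arithmetic} sequence $S_0=\tau+\log 2$, $S_i=S_{i-1}+\mu$ for $1\leq i\leq l-1$, $S_l=U$, with $Q_i={}^hR_{S_i}$. The point you are missing is that with this choice the ball in condition~(4) of Theorem~\ref{critereJadot} has radius exactly $\ee^{-\sigma(S_{i-1}+\mu)}=\ee^{-\sigma S_i}$; the lower bound hypothesis at $s=S_i$ gives $|R_{S_i}(\ul{\theta})|/(1+|\ul{\theta}|^2)^{\deg R_{S_i}/2}\geq\ee^{-\sigma S_i+2\tau}$, and the spare factor $\ee^{2\tau}$ is precisely what absorbs the constants $2^{\delta-1}\sqrt{n+1}\,\delta\,\ee^{\tau}$ coming from the Lipschitz-type preliminary estimate (indeed $\tau\geq 3(k+1)\delta\log(n+1)$ already gives $\tau>(\delta-1)\log 2+\tfrac12\log(n+1)+\log\delta$). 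So there is no need for $\sigma\mu$ to exceed $\tau$: the margin is built into the $2\tau$ appearing in the two-sided hypothesis on $R_s$, not into $\mu$. Keeping $\mu$ small and explicit is also what makes the term $\bigl(\mu+\log 2+\tfrac{\log\delta}{2\delta^k}\bigr)\deg_{\ul d}\mathcal I$ in~\eqref{critereJadot_codition_1} collapse cleanly to $2(k+1)\delta\log(n+1)\cdot\delta^k D$, which together with the $(k+1)\delta\log(n+1)$ inside $t_{\tau,\ul d}$ gives the factor $3(k+1)\delta\log(n+1)\leq\tau$ used to arrive at~\eqref{corTheseJadot59_hyp_ie_main}.

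In short: your proposal is on the right track and would likely go through, but the recursive definition of $S_i$ and the constraint $\sigma\mu\gtrsim\tau$ are unnecessary complications; the paper's arithmetic progression with a small explicit $\mu$ makes both the non-vanishing step and the final bookkeeping transparent.
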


\begin{proof}
We use the line of reasoning from the proof of Corollary~5.9 in~\cite{J1996}.

Set $\theta_0=1$, and let $t$ be an index such that $\theta_t=\max\left(\theta_0,\dots,\theta_n\right)$ and denote $\ul{\theta}'=\left(\frac{\theta_0}{\theta_t},\dots,\frac{\theta_n}{\theta_t}\right)$.

We are going to use the following preliminary result:

{ \it Let $Q\in K[X_0,\dots,X_n]$ such that $Q(\ul{X})=\sum_{|\ul{\alpha}|=d}a_{\ul{\alpha}}\ul{X}^{\ul{\alpha}}$ be a homogeneous polynomial.
If this polynomial has at least one zero in the ball centred at  $\ul{\theta}'=\left(\frac{\theta_0}{\theta_t},\dots,\frac{\theta_n}{\theta_t}\right)$ and of radius
$R<\frac{1}{\sqrt{n+1}}$ for the euclidean distance of the affine chart $X_t\ne 0$ then
\begin{equation*}
    \frac{|Q(\ul{\theta}')|}{|\ul{\theta}'|^{2\delta}}\leq R 2^{\delta-1}\sqrt{n+1}\delta\sqrt{\sum_{|\alpha|=\delta}\frac{|a_{\ul{\alpha}}|^2}{\binom{\delta}{\ul{\alpha}}}}.
\end{equation*}
}

The proof of this result can be found in~\cite{J1996}, page~127.

\begin{notation}
 Let $P$ be a polynomial from $K[X_1,\dots,X_n]$. We denote by $^hP$ its homogenization, that is $^hP$ is a homogeneous polynomial from $K[X_0,X_1,\dots,X_n]$ defined by
$$
^hP(X_0,\dots,X_n):=X_0^{\deg P}P\left(\frac{X_1}{X_0},\dots,\frac{X_n}{X_0}\right),
$$
where $\deg P$ denotes the total degree of $P$ with respect to $X_1,\dots,X_n$.
\end{notation}

We verify, using Corollary~\ref{corTheseJadot59}, that the polynomial $^hR_s$ has no zeros in the ball centred at $(1,\theta_1,\dots,\theta_n)$ and of the radius $\ee^{-\sigma s}$. Note that the condition $\tau\geq 3(k+1)\delta\log(n+1)$ implies $\tau>(\delta-1)\log 2+\frac12\log(n+1)+\log\delta$. Also we have $\sqrt{\sum_{|\alpha|=\delta}\frac{|a_{\ul{\alpha}}|^2}{\binom{\delta}{\ul{\alpha}}}}\leq\ee^{\tau}$, we infer from the condition $\ee^{-\sigma s+2\tau}\leq\frac{|R_s(\ul{\theta})|}{\left(1+|\ul{\theta}|^2\right)^{\frac{\deg R_s}{2}}}$ the following inequality:
\begin{equation*}
    \ee^{-\sigma s}2^{\delta-1}\sqrt{n+1}\delta\sqrt{\sum_{|\alpha|=\delta}\frac{|a_{\ul{\alpha}}|^2}{\binom{\delta}{\ul{\alpha}}}}
    <\frac{|^hR_s(\ul{\theta}')|}{|\ul{\theta}'|^{\delta}}.
\end{equation*}
This inequality and our preliminary statement allow us to conclude that the polynomial $^hR_s$ has no zeros
in a ball of centre $(1,\theta_0,\dots,\theta_n)$ and of radius $\ee^{-\sigma s}$.


Let $l=\left[\frac{U-S_0}{\mu}\right]$, where $\mu=2(k+1)\delta\log(n+1)-\log 2-\frac{\log\delta}{2\delta^k}>0$. Consider the following sequence:
\begin{equation*}
    \left\{\begin{aligned}
        &0<S_0=\tau+\log 2,\\
        &S_i=S_{i-1}+\mu, \qquad i=1,\dots,l-1,\\
        &S_l=U.
    \end{aligned}\right.
\end{equation*}
We readily verify that for every $i$, $1\leq i\leq l$, the polynomials $Q_i=^h\!\!R_{S_i}$ verify
\begin{itemize}
  \item $\deg(Q_i)=\delta$,
  \item $h(\omega_{\delta}^*(Q_i))\leq\tau$,
  \item $\frac{|Q_i(\ul{\theta}')|}{|\ul{\theta}'|^{\delta}}\leq\ee^{-S_i}$,
  \item The polynomial $Q_i$ has no zeros in the ball centred at $(1,\theta_0,\dots,\theta_n)$ and of radius $\ee^{-\sigma(S_{i-1}+\mu)}$.
\end{itemize}

Moreover, we have for $\ul{d}=(\delta,\dots,\delta)\in\mnn^{k+1}$
\begin{equation*}
\begin{aligned}[]
    \frac{[K:\mqq]}{n_{\infty}}t_{\tau,\ul{d}}(\mathcal{I})+&\left(\mu+\log 2+\frac{\log\delta}{2\delta^k}\right)\deg_{\ul{d}}(\mathcal{I})\\
    &\leq\delta^k[K:\mqq]\left(\delta H+\left(\tau+3(k+1)\delta\log(n+1)\right)D\right)\\
    &\leq 2\delta^k[K:\mqq]\left(\delta H+\tau D\right).
\end{aligned}
\end{equation*}
Assumption~\eqref{corTheseJadot59_hyp_ie_main} implies
\begin{equation*}
    \frac{[K:\mqq]}{n_{\infty}}t_{\tau,\ul{d}}(\mathcal{I})+\left(\mu+\log 2+\frac{\log\delta}{2\delta^k}\right)\deg_{\ul{d}}(\mathcal{I})\leq\frac{U}{\sigma^{k+1}}
\end{equation*}
and hence the condition~\eqref{critereJadot_codition_1} of Theorem~\ref{critereJadot} holds true.

Thus all the hypothesis of Theoreme~\ref{critereJadot} are verified, so we can apply this theorem with the parameters $\delta$, $\tau$, $\sigma$, $\mu$, $U$ as above. We find with this theorem
\begin{equation*}
    \log\Dist\left(\ul{\theta},\V(\mathcal{I})\right)\geq-U.
\end{equation*}
and this gives us the conclusion of Corollary~\ref{corTheseJadot59}.
\end{proof}

Theorem~\ref{CplM} below is our principal tool in proofs of algebraic independence and establishing measures of algebraic independence. It is essentially Criterion for the measures from~\cite{PP_KF} with one small technical adjustment: in the statement of Theorem~\ref{CplM} we allow polynomials $R_s$ to have coefficients in a number field $K$, and not only in $\mzz$ as in~\cite{PP_KF}. The proof from~\cite{PP_KF} still perfectly works in this, more general, case. I reproduce this proof below for the commodity of the reader, following~\cite{PP_KF}. The only change needed in the proof is that one has to use Corollary~\ref{corTheseJadot59} instead of~\cite[Corollary~5.9]{J1996} (cited in~\cite{PP_KF} as "Theorem from page~5.").

This generalisation allows to apply approximation polynomials $R_s$ with coefficients in a number field $K$, and not only in $\mzz$. This (easy) improvement is important for our purposes. At the same time, I don't see how to deduce Theorem~\ref{CplM} directly from the statement of Criterion for the measures in~\cite{PP_KF}, without a reference to its proof.
\hidden{
Theorem~\ref{CplM} below is our principal tool in proofs of algebraic independence and establishing measures of algebraic independence. It is proved in~\cite{PP_KF} with $\mzz$ in place of $K$. The proof given there still works if we replace $\mzz$ with an arbitrary number field $K$, the only change needed is that one has to use Corollary~\ref{corTheseJadot59} instead of Corollary~5.9 in~\cite{J1996} (cited in~\cite{PP_KF} as "Theorem from page~5.").  For the commodity of the reader, I reproduce below the proof from~\cite{PP_KF} with this minor modification. This generalisation, allowing auxilliary polynomials $R_s$ to have coefficients in a number field $K$ and not only in $\mzz$, is important for our purposes in this paper. At the same time, I don't see how to deduce Theorem~\ref{CplM} directly from Criterion for the measures in~\cite{PP_KF}, without reference to its proof.
}
\begin{theorem} \label{CplM} (Criterion for the measures,~\cite[page~5]{PP_KF}) Let $m,k,\lambda,\delta_1,\dots,\delta_m\in\mnn$, $\sigma,\tau,U\in\mrr_{>0}$ and $\ul{x}=(x_1,\dots,x_m)\in\mcc^m$ be such that $0\leq k<m$, $\lambda\geq 1$, $\delta_1\geq\dots\geq\delta_m\geq 1$, $U>\tau\geq 4(k+1)\log\left(\delta_1\cdots\delta_m(1+m^2)\right)$. Let $K$ be a number field. Assume that for every real $\tau\lambda\leq s\leq U$ there exists a polynomial $R_s\in K[X_1,\dots,X_m]$ of degree $<e(s)\delta_i$ in $X_i$, of length $\leq \exp(e(s)\tau)$ and satisfying
\begin{equation}\label{CplM_approximation}
    \exp\left(-s\sigma+2e(s)\tau\right)\leq\frac{|R_s(\ul{x})|}{\prod_{i=1}^m(1+|x_i|^2)^{\delta_i/2}}\leq\exp\left(-s-e(s)(\delta_1+\dots+\delta_m)\right)
\end{equation}
where $e(s):=1+\max_{1\leq i\leq m}\left[\frac{\deg_{X_i}R_s}{\delta_i}\right]\leq\lambda$. Then, for every algebraic variety $V\subset\mathbb{A}^m(\mcc)$ defined over $\mqq$, of dimension $k$, and satisfying
\begin{equation}\label{CplM_VarietyRestriction}
    [K:\mqq]\cdot 3\lambda^{k+1}\delta_1\dots\delta_k\left(\delta_{k+1}t(V)+\tau\deg(V)\right)\leq U/(\sigma m)^{k+1},
\end{equation}
one has
\begin{equation*}
    \log\Dist(x',V')\geq-U,
\end{equation*}
where $x'=(1:x_1:\dots:x_m)\in\mpp^m_{\mcc}$ and $V'$ denotes the completion of $V$ in $\mpp^m_{\mcc}$.

\end{theorem}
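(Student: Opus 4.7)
The plan is to derive Theorem~\ref{CplM} from Theorem~\ref{critereJadot} by a homogenization trick, adapting the technique used in the proof of Corollary~\ref{corTheseJadot59} to the multigraded setting with weights $(\delta_1,\dots,\delta_m)$. Since here the polynomials $R_s\in K[X_1,\dots,X_m]$ have a varying multi-degree profile (with $\deg_{X_i} R_s<e(s)\delta_i$), I would first rescale the entire family to a fixed common degree.

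For the first step, given a real $s$ in the range $\tau\lambda\leq s\leq U$, I homogenize $R_s$ by introducing an auxiliary variable $X_0$ and multiplying by the appropriate power of $X_0$ so that the resulting polynomial $Q_s\in K[X_0,\dots,X_m]$ has uniform total degree $\delta^\star:=\lambda(\delta_1+\cdots+\delta_m)$, independent of $s$. This is possible because $\deg_{X_i}R_s<e(s)\delta_i\leq\lambda\delta_i$ for every $i$. The length bound $\|R_s\|\leq e^{e(s)\tau}$ then translates, via standard combinatorial estimates for the weighted $\ell^2$-norm $\omega_{\delta^\star}^*$, into a bound on the weighted height $h(\omega_{\delta^\star}^*(Q_s))\leq e(s)\tau+c_1$, where $c_1$ depends only on $m$, $\lambda$ and the $\delta_i$'s.

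Next I translate the two-sided estimate \eqref{CplM_approximation} into projective form at the point $\ul x':=(1:x_1:\dots:x_m)\in\mpp^m_{\mcc}$. Using $|Q_s(\ul x')|=|R_s(\ul x)|$ in the affine chart $X_0=1$, together with the comparability (up to a multiplicative constant depending only on $\lambda$, $m$ and the $\delta_i$'s) of $\prod_i(1+|x_i|^2)^{\delta_i/2}$ with $|\ul x'|^{\,\delta^\star/\lambda}$, the upper bound becomes $|Q_s(\ul x')|/|\ul x'|^{\delta^\star}\leq e^{-S(s)}$ for an $S(s)$ essentially linear in $s$; while the lower bound in \eqref{CplM_approximation}, combined with the length control and the preliminary ball lemma used in the proof of Corollary~\ref{corTheseJadot59}, certifies that $Q_s$ has no zero in a ball around $\ul x'$ of radius $e^{-(S(s)+\mu)\sigma^\star}$, for a suitable constant gap $\mu$ and a rescaled $\sigma^\star$ derived from $\sigma$ and $m$. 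I then select a strictly increasing sequence $(S_i)_{0\leq i\leq l}$ with $S_0=\tau^\star+\log 2$, $S_i=S_{i-1}+\mu$, and $S_l=U$, mirroring the construction in Corollary~\ref{corTheseJadot59}, so that the polynomials $Q_{s_i}$ fulfil the four hypotheses of Theorem~\ref{critereJadot} with data $(\delta^\star,\tau^\star,\sigma^\star,\mu,U)$.

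The hardest part will be the fourth hypothesis of Theorem~\ref{critereJadot}, namely the numerical inequality on the ideal $\mathcal{I}$ defining the completion $V'$ of $V$: I must show that the multigraded hypothesis \eqref{CplM_VarietyRestriction}, formulated in terms of $\delta_1\cdots\delta_k(\delta_{k+1}t(V)+\tau\deg V)$ and carrying an extra factor $(\sigma m)^{k+1}$, implies the single-graded inequality \eqref{critereJadot_codition_1} involving $\deg_{\ul d}\mathcal{I}$ and $h_{\ul d}(\mathcal{I})$ for $\ul d=(\delta^\star,\dots,\delta^\star)\in\mnn^{k+1}$. This requires tracking carefully how the combinatorial factors arising from the homogenization absorb into the numerical constants, using standard multigraded Chow-form comparisons between $\deg_{\ul d}\mathcal{I}$, $h_{\ul d}(\mathcal{I})$ and the weighted degree/height of the affine variety $V$. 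Once this compatibility is verified, Theorem~\ref{critereJadot} immediately yields $\log\Dist(\ul x',V')\geq-U$, which is the desired conclusion of Theorem~\ref{CplM}.
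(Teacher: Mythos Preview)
Your plan has a genuine gap: working directly in $\mpp^m$ with total degree $\delta^\star=\lambda(\delta_1+\cdots+\delta_m)$ discards the multigraded structure in a way that cannot be recovered. When you apply Theorem~\ref{critereJadot} (or Corollary~\ref{corTheseJadot59}) with the single-graded parameter $\delta^\star$, the quantity $\deg_{\ul d}\mathcal{I}$ for $\ul d=(\delta^\star,\dots,\delta^\star)$ is of order $(\delta^\star)^k\deg V$, and $h_{\ul d}(\mathcal{I})$ is of order $(\delta^\star)^{k+1}h(V)$. The hypothesis~\eqref{CplM_VarietyRestriction}, however, only controls $\delta_1\cdots\delta_k\deg V$ and $\delta_1\cdots\delta_{k+1}t(V)$. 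Since $\delta_1\geq\cdots\geq\delta_m$ may be highly unbalanced (say $\delta_1$ very large and $\delta_2=\cdots=\delta_m=1$), the ratio $(\delta^\star)^{k}/(\delta_1\cdots\delta_k)$ is unbounded, and your numerical inequality cannot be deduced from~\eqref{CplM_VarietyRestriction}. There is no ``standard multigraded Chow-form comparison'' that saves you here, because the ideal $\mathcal{I}\subset K[X_0,\dots,X_m]$ carries only the standard grading. A secondary symptom of the same problem is your claimed comparability of $\prod_i(1+|x_i|^2)^{\delta_i/2}$ with $|\ul{x}'|^{\delta^\star/\lambda}$: these quantities differ by a factor depending on $\ul{x}$, not merely on $m,\lambda,\delta_i$.

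The paper's proof avoids both difficulties by passing through a weighted Segre-type embedding $\phi:(\mpp^1)^m\hookrightarrow\mpp^N$ with $N=(\delta_1+1)\cdots(\delta_m+1)-1$, sending $(x_{0,i}:x_{1,i})_i$ to the family of monomials $\prod_i x_{0,i}^{\alpha_i}x_{1,i}^{\delta_i-\alpha_i}$. Under $\phi$, each multihomogenised $R_s$ becomes the pullback of a form $P_s$ on $\mpp^N$ of \emph{small} degree $e(s)\leq\lambda$, so that the $\delta^k$ factor in Corollary~\ref{corTheseJadot59} is merely $\lambda^k$. Meanwhile the degree and height of the image $\phi(W)$ (where $W$ is the completion of $V$ in $(\mpp^1)^m$) are computed via multiprojective Chow forms as sums of $d_i(W)\delta_1^{i_1}\cdots\delta_m^{i_m}$ and $h_i(W)\delta_1^{i_1}\cdots\delta_m^{i_m}$, whose dominant terms are exactly $\delta_1\cdots\delta_k d(V)$ and $\delta_1\cdots\delta_{k+1}h(V)$ thanks to the ordering $\delta_1\geq\cdots\geq\delta_m$. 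One then applies Corollary~\ref{corTheseJadot59} in $\mpp^N$ and transfers the conclusion back to $\Dist(x',V')$ via a comparison of distances under $\phi$. This Segre step is the missing idea in your proposal.
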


\begin{proof} Consider the completion $W$ of $V$ in $\left(\mpp^1\right)^m$ and the point $\tilde{x}=\left(1:x_1,\dots,1:x_m\right)\in\left(\mpp^1\right)^m$. Let $\phi:\left(\mpp^1\right)^m\hookrightarrow\mpp^N$, where $N=(\delta_1+1)\cdots(\delta_m+1)-1$ be the embedding defined by
\begin{equation*}
    \phi(x_{0,1}:x_{1,1},\dots,x_{0,m}:x_{1,m})=\left(\dots:\prod_{i=1}^mx_{0,i}^{\alpha_i}x_{1,i}^{\delta_i-\alpha_i}:\dots\right)_{\ul{\alpha}\in\mnn^m,0\leq\alpha_i\leq\delta_i}.
\end{equation*}
We readily verify, as it is done in~\cite{PP_AH}, III, Proposition~1 and~\cite{J1996}, \S2.3~(b), Lemma~2.13,
\begin{eqnarray*}
  d\left(\phi(W)\right) &=& k!\sum_{\substack{i\in\{0,1\}^m\\|i|=k}}d_i(W)\delta_1^{i_1}\cdots\delta_m^{i_m},\\
  h\left(\phi(W)\right) &=& (k+1)!\sum_{\substack{i\in\{0,1\}^m\\|i|=k+1}}\left(h_i(W)+\sum_{\substack{\alpha=1\\i_{\alpha}\ne 0}}d_{\hat{i_{\alpha}}}(W)\right)\delta_1^{i_1}\cdots\delta_m^{i_m},
\end{eqnarray*}
where $d_i(W)$ and $h_i(W)$ denote the multihomogeneous degrees and heights of $W$, and where $\hat{i_{\alpha}}$ is obtained from $i$ by setting the $\alpha$-th component equal to 0 (so that $|\hat{i_{\alpha}}|=k$). In particular, $k!\sum d_i(W)$ and $(k+1)!\sum h_i(W)$ are the degree and the height of $V$ embedded in $\mpp^{2^m-1}$ with Segre's embedding and are equal respectively to $m^kd(W)$ and $m^{k+1}h(W)$. Every polynomial $R_s$, suitably homogenized, can be represented as an inverse image by $\phi$ of a form $P_s\in K[X_0,\dots,X_N]$ of degree $e(s)\leq\lambda$ and of the length $\leq\ee^{e(s)\tau}\leq\ee^{\tau\lambda}$. Moreover, if $y=\phi(\tilde{x})$ we have
\begin{equation*}
    1\leq\frac{\prod_{i=1}^n(1+|x_i|^2)^{\delta_i/2}}{|y|}\leq\ee^{\delta_1+\dots+\delta_m},
\end{equation*}
where $|y|=\sum_{i=0}^m|y_i|$.

Hence
\begin{equation*} 
    \exp\left(-s\sigma+2e(s)\tau\right)\leq\frac{|P_s(y)|}{|y|^{\deg P_s}}\leq\exp\left(-s\right).
\end{equation*}
At the same time, the condition~(\ref{CplM_VarietyRestriction}) implies
\begin{equation*}
    2\delta^{k+1}\left(h\left(\phi(W)\right)+(k+1)\tau d\left(\phi(W)\right)\right)\leq U'/\sigma^{k+1}
\end{equation*}
where $U'=U-m^{k+1}\delta_1\cdots\delta_k\log\left(\delta_1(1+m^2)\right)d(V)$. Indeed, as $\delta_1\geq\cdots\geq\delta_m$, we have
\begin{eqnarray*}
  d\left(\phi(W)\right) &=& m^kd(V)\delta_1\cdots\delta_k, \\
  h\left(\phi(W)\right) &=& m^{k+1}t(V)\delta_1\cdots\delta_{k+1}.
\end{eqnarray*}
So we can apply Corollary~\ref{corTheseJadot59} to $\phi(W)$ and this completes the proof, because by Proposition~3.9 of~\cite{J1996} we can verify
\begin{equation*}
    \begin{aligned}
        \log\Dist(x',V') &\geq \log\Dist(\tilde{x},W)-\frac{k+1}{1}\cdot\log\left(\delta_1\cdots\delta_m(1+m^2)\right)\cdot d\left(\phi(W)\right)\\
                       &\geq \log\Dist(y,\phi(W))-m^{k+1}\delta_1\cdots\delta_k\log\left(\delta_1(1+m^2)\right)d\left(\phi(W)\right)\\
                       &\geq \log -U'-m^{k+1}\delta_1\cdots\delta_k\log\left(\delta_1(1+m^2)\right)d\left(\phi(W)\right)=-U,
    \end{aligned}
\end{equation*}
and the claim of the theorem readily follows.
\end{proof}

\hidden{
\begin{remark}
Theorem~\ref{CplM} is proved in~\cite{PP_KF} with $\mzz$ in place of $K$. The proof given there still works if we replace $\mzz$ with an arbitrary number field, the only change needed is that one has to use Corollary~\ref{corTheseJadot59} instead of Corollary~5.9 in~\cite{J1996} (cited in~\cite{PP_KF} as "Theorem from page~5."). Here above, we used the line of the proof from~\cite{PP_KF}. 
\end{remark}
}

\section{Extrapolative construction} \label{section_K_functions}

In this section we present (a simple version of) the extrapolative construction elaborated in~\cite{PP_KF}. A version of the construction that we need is stated in Proposition~\ref{annexe_P7}. As the statement of Proposition~\ref{annexe_P7} uses in a significant way the notion of $K$-functions, introduced in~\cite{PP_KF}, we remind in this section the relavant definitions from~\cite{PP_KF}. Please note that here we consider only a restricted version which is sufficient for our purposes. We refer the reader to~\cite{PP_KF} for a more general version.

We start with some notations. Let $m\geq 1$ be an integer and let $f_1(z),\dots,f_m(z)$ be functions analytic at $0$ with a radius on convergence at least 1. For every integer $i\in\mnn$ and every formal power series (for instance, for every function analytic at $z=0$) $g\in\mcc((\b{z}))$ we denote by $D_i(g)$ the $i$-th coefficient of $g$ (we follow the notation introduced in~\cite{PP_KF}). Further, for every $\Dc\subset\mnn^n$  and every $i\in\mnn$ we define
\begin{equation} \label{def_D_Dc}
    \ul{D}_i f^{\Dc}(0)=\left(D_{i'}(f_1^{\alpha_1}\cdots f_m^{\alpha_m})(0)\mid\alpha\in \Dc, i'=0,\dots,i\right),
\end{equation}
this is a vector from $\ol{\mqq}^{(i+1)\card \Dc}$. For every increasing function $\psi:\mnn\rightarrow\mnn^*$ and a finite set of cardinality $\card\Dc\geq 2\psi(0)$, we also denote $i_{\Dc}=i_{\Dc,\psi}$ the biggest integer such that $2i_{\Dc}\psi(i_{\Dc}-1)\leq\card\Dc$.

We also use the notation
\begin{equation*}
    |\Dc|:=\max_{\alpha\in\Dc}\sum_{j=1}^{n+1}|\alpha_j|.
\end{equation*}

\smallskip

\begin{definition}
Let $n\in\mnn^*$, let $A$ be an infinite set of subsets of $\mnn^n$, and let functions
\begin{equation*}
    \psi:\mnn\rightarrow\mnn^*,\quad\phi:A\times\mnn\rightarrow\mrr_{\geq \ee},
\end{equation*}
where $\mrr_{\geq \ee}$ denotes the set of real numbers greater or equal than $e$, be such that for $\Dc\in A$ as above the functions $\psi(i)$ and $\phi_{\Dc}(i)$ are increasing in $i$, the function $\frac{\log\phi_{\Dc}(i)}{i}$ is decreasing in $i$ for $i\geq i_{\Dc}$ and $\liminf_{|\Dc|\rightarrow\infty}\frac{\log\phi_{\Dc}(i_{\Dc})}{i_{\Dc}}=0$. Moreover, assume $\phi_{\Dc}(i)\geq 4\sqrt{2}(i+1)\psi(i)$ for all $i\geq i_{\Dc}$.

We say that a family $(f_1,\dots f_m)$ of functions analytic in  $B(0,1)$ forms a \emph{system of $K$-functions of type $(\psi,\phi)$} if for all $i\in\mnn$ and for all $\Dc\in A$ we have
\begin{equation*}
    \ul{D}_i\ul{f}(0)\subset\ol{\mqq},\quad\left[\mqq(\ul{D}_i\ul{f}(0)):\mqq\right]\leq\psi(i),\quad h\left(\ul{D}_i\ul{f}^{\Dc}(0)\right)\leq\phi_\Dc(i).
\end{equation*}
\end{definition}

\begin{definition}
Let $c\geq 1$ be a real number. We say that $\Dc\in A$ is \emph{$c$-admissible} for a system of $K$-functions $f_1,\dots,f_{n+1}$ if for all $Q\in\mzz[X_1,...,X_{n+1}]\setminus\{0\}$, supported by $\Dc$ (that is of the form $\sum_{\alpha\in \Dc}q_{\alpha}X_1^{\alpha_1}\cdots X_{n+1}^{\alpha_{n+1}}$), of length $\leq\sqrt{2}\cdot\card\Dc\cdot\phi_{\Dc}(i_{\Dc})$, we have
\begin{equation*}
    \ordz Q(f_1,\dots,f_{n+1})\leq c\cdot\card\Dc.
\end{equation*}
\end{definition}

\begin{remark} \label{rem_full_admissible} It follows from the definitions that for any integers $n, t, D\geq 1$, $t\leq n$, a set
\begin{equation}\label{defDcanonique}
    \Dc=\{\ul{h}\in\mnn^{n+1}\mid h_0<D_0, \; h_i<D,\;i=1,\dots,t, \; h_i=0,\; i=t+1,\dots,n\}
\end{equation}
is $c$-admissible for a system of $K$-functions $z, \b{f}_1,\dots,\b{f}_{n}$ if and only if $\ull{f}=(z, \b{f}_1,\dots,\b{f}_{t})$ satisfies multiplicity lemma with the optimal exponent and the multiplicative constant $c$, that is if for any non-zero polynomial $P\in\mcc[z,X_1,\dots,X_t]$ we have
$$
\ordz P(z,f_1(z),\dots,f_t(z))\leq c D_0 D^t.
$$
\end{remark}
Theorem~\ref{theoNishioka} below shows that Remark~\ref{rem_full_admissible} is applicable in the case of Mahler's functions~\eqref{systeme_1}, at least if $D_0\geq D$.
\begin{theorem} \label{theoNishioka}
Let  $(f_1(\b{z}),\dots,f_n(\b{z}))$ be an $n$-tuple of functions $\mcc\rightarrow\mcc$ analytic at $z=0$ that form a solution to the system of functional equations~\eqref{systeme_1}. Also, assume~\eqref{intro_trdeg} and assume that $f_1(z),\dots,f_t(z)$ are algebraically independent.
Then there exists a constant $K_1$ such that for any polynomial $P\in\AnneauDePolynomes$ 
either $P(\ull{f})=0$ or
\begin{equation} \label{intro_theoNishioka_conclusion}
    \ordz(P(\ull{f})) \leq K_1(\deg_{\ul{X'}}P + \deg_{\ul{X}}P + 1)(\deg_{\ul{X}} P + 1)^t.
\end{equation}
\end{theorem}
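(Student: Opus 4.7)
The plan is to adapt Nesterenko's ideal-theoretic multiplicity estimate to the Mahler setting, using iteration of the functional equation~\eqref{systeme_1} as the source of geometric amplification. Since $\det A(z)\not\equiv 0$, the system may be inverted to give
$$\ull{f}(p(z))=A(z)^{-1}\bigl(a(z)\ull{f}(z)-B(z)\bigr),$$
which exhibits every $f_i(p(z))$ as an affine $\ol{\mqq}(z)$-linear combination of $f_1(z),\dots,f_n(z)$. Iterating, each $f_i(p^{[h]}(z))$ is again an affine $\ol{\mqq}(z)$-linear combination of the $f_j(z)$. Hence for any $P\in\ol{\mqq}[z,X_1,\dots,X_n]$ and any $h\in\mnn$ I would construct polynomials $P_h\in\ol{\mqq}[z,X_1,\dots,X_n]$ and $E_h\in\ol{\mqq}[z]\setminus\{0\}$ satisfying
$$E_h(z)\cdot P\bigl(p^{[h]}(z),\ull{f}(p^{[h]}(z))\bigr)=P_h\bigl(z,\ull{f}(z)\bigr),$$
with $\deg_{\ul{X}}P_h\leq\deg_{\ul{X}}P$ and $\deg_z P_h=O(d^h)(\deg_{\ul{X}}P+\deg_z P)$. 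Algebraic independence of $\ull{f}$ over $\mcc(z)$ ensures that $P_h\ne 0$ whenever $P\ne 0$.

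Since $\ordz p(z)\geq\delta$, one has $\ordz p^{[h]}(z)=\delta^h$, so
$$\ordz P_h(z,\ull{f}(z))\geq \delta^h\cdot\ordz P(z,\ull{f}(z))-\ordz E_h(z),$$
with $\ordz E_h(z)=O(d^h)$. This geometric amplification of the vanishing order would then be coupled with an upper bound of the form
$$\ordz Q(z,\ull{f}(z))\leq K\bigl(\deg_z Q+\deg_{\ul{X}}Q+1\bigr)\bigl(\deg_{\ul{X}}Q+1\bigr)^n$$
valid for every non-zero $Q\in\ol{\mqq}[z,X_1,\dots,X_n]$, applied to $Q=P_h$. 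Substituting the degree bounds of $P_h$ and choosing $h$ appropriately then yields the conclusion.

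The heart of the matter is therefore this geometric zero estimate. Following Nesterenko's strategy, I would construct a strictly descending chain of homogeneous prime ideals $\I_0\subsetneq\I_1\subsetneq\cdots\subsetneq\I_{n+1}$ in $\ol{\mqq}[Z_0,\dots,Z_{n+1}]$, each of whose vanishing locus contains, or approximates to high order, the projective completion of the formal curve $\{(1:z:f_1(z):\cdots:f_n(z))\}$. At each codimension step one would exploit the Mahler relation together with the algebraic independence of $f_1,\dots,f_n$ over $\mcc(z)$ to produce a new polynomial lying in the vanishing ideal of the formal point but not in the current $\I_j$; the multi-homogeneous B\'ezout inequality (\cite{NP}, Chapter~7) would control the intersection-theoretic degrees at each stage. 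The hard part is to arrange these steps so that the exponent on $\deg_{\ul{X}}$ in the final bound is exactly $n$ and not $n+1$: this optimality, which distinguishes the present statement from the earlier estimates of T\"opfer~\cite{ThTopfer1995} and Nishioka~\cite{Ni1996}, is the technical content of the author's papers~\cite{EZ2010,EZ2013}, and requires a careful analysis of how the Mahler iteration interacts with the primary decomposition at each codimension level.
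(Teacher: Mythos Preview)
The paper does not actually prove this statement: its entire proof is the one-line citation ``See~\cite{EZ2013}, Theorem~1.3.'' So there is no in-paper argument to compare against; the result is imported wholesale from the author's earlier work.

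That said, your proposal has a structural defect worth flagging. The first half is circular: you iterate the functional equation to build $P_h$ from $P$, observe that $\ordz P_h\geq\delta^h\ordz P - O(d^h)$, and then say you will ``couple'' this with an upper bound of the form $\ordz Q(\ull f)\leq K(\deg_z Q+\deg_{\ul X}Q+1)(\deg_{\ul X}Q+1)^n$ applied to $Q=P_h$. But that upper bound \emph{is} the theorem you are trying to prove. Moreover, even granting it, the substitution yields nothing: since $\deg_z P_h$ grows like $d^h$ while the amplification factor is $\delta^h$, and $d\geq\delta$ always, you recover at best the original inequality with the same constant (and strictly worse if $d>\delta$). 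This iteration-and-substitution trick is exactly how the paper \emph{uses} Theorem~\ref{theoNishioka} downstream (Lemma~\ref{lemma_P7}, Proposition~\ref{existence_P_DT}), not how one proves it.

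The second half of your sketch---Nesterenko's descending chain of prime ideals with controlled degrees, refined so that the exponent on $\deg_{\ul X}$ comes out as $n$ rather than $n+1$---is indeed the actual content of~\cite{EZ2010,EZ2013}, and you are right that the sharpening of the exponent is precisely the technical advance over T\"opfer and Nishioka. Since the present paper only cites that result, your acknowledgement that the hard work lives in those references is the correct disposition; just drop the amplification preamble, which does not contribute to the proof.
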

\begin{proof}
See~\cite{EZ2013_2}, Theorem~5.8.
\end{proof}

The following proposition is a particular case of the general construction developed in~\cite{PP_KF}. We use this result in Section~\ref{section_PS} to construct polynomials with nice approximation properties.
\begin{proposition} \label{annexe_P7} (A particular case of Proposition~7 in~\cite{PP_KF}).
Let $K$ be a number field and let $c'\geq 1$ and let $f_1,...,f_{n+1}$ be a system of $K$-functions of type $(\psi,\phi)$. Then,
\begin{enumerate}
  \item for all $\Dc\in A$ there exists a polynomial $P\in\mzz[X_1,\dots,X_{n+1}]\setminus\{0\}$, supported by $\Dc$, of length
$$\leq\sqrt{2}\cdot\card \Dc\cdot\phi_{\Dc}(i_{\Dc})$$
and such that $T_0:=\ordz F(\b{z})\geq i_{\Dc}$, where
\begin{equation} \label{annexe_P7_def_F}
F=P(f_1,...,f_{n+1});
\end{equation}

  \item \label{annexe_P7_part_two} under assumption $\frac{\log\phi_{\Dc}(i_{\Dc})}{i_{\Dc}}\leq\frac{1}{24}$, for all real numbers $r'$, $r''$ such that $0<r''\leq r'<r^4$, where
  \begin{equation} \label{P7_def_r}
    r:=1-\frac{12\log\phi_{\Dc}(T_0)}{T_0},
  \end{equation}
  for every positive integer $0<N\leq T_0/8$ and for every point 
$$
z\in\ol{B(0,r')}\setminus B(0,r'')
$$
satisfying
  \begin{equation}\label{P7_condition1}
    \frac{-N\cdot\log \left(\frac{r'}{r}\right)-N\cdot\log\left(\frac{1+|z|/r'}{1+|z|\cdot r'/r^2}\right)      }{\log\phi_{\Dc}(T_0)}\geq 19
  \end{equation}
  there exists a positive integer $0<i\leq N$ such that
  \begin{equation*}
    \left(\frac{r''}{4}\right)^{(c'+4)T_0}\leq |D_iF(z)|\leq\left(\frac{r'}{(1-\sqrt{r'})^2}\right)^{T_0/16}.
  \end{equation*}
  Moreover, if $\Dc$ is $c$-admissible for $f_1,\dots,f_{n+1}$, then $T_0\leq c\cdot\card\Dc$.
\end{enumerate}
\end{proposition}

\section{Polynomial sequences} \label{section_PS}

In this section we construct sequences of polynomials with nice approximation properties. Our main result in this section is Proposition~\ref{existence_P_DT}, which is used in proofs of Theorems~\ref{annexe_theo_ia1} and~\ref{annexe_theo_ia2}. We derive this result from general extrapolative construction, Proposition~\ref{annexe_P7}. The proof of Theorem~\ref{annexe_theo_2} makes appeal to a slightly different Proposition~\ref{existence_P_DT_rational_p}.

We start with an auxilliary lemma, which is Lemma~2 from~\cite{ThTopfer1995}. We provide a proof (following the lines of the proof in~\cite{ThTopfer1995}) 
for the commodity of the reader and, simultaneously, to fix a minor issue with the proof given in~\cite{ThTopfer1995}.
\begin{lemma} \label{lemma_db_p_T}
Let $p(z)=p_1(z)/p_2(z)$ be a rational function with $\delta=\ordz p(z)\geq 2$ and let $y\in\mcc$ satisfies $p^{[T]}(y)\ne 0$ for all $T\in\mnn$. Assume
\begin{equation} \label{lemma_db_p_T_convergence}
\lim_{T\rightarrow\infty}p^{[T]}(y)=0.
\end{equation}
Then there exist constants $0<c_3',c_3''<1$ and $T_s>0$, depending on $p$ and $y$ only, such that
\begin{equation} \label{lemma_db_p_T_result}
    |c_3'|^{\delta^T}\leq|p^{[T]}(y)|\leq |c_3''|^{\delta^T},
\end{equation}
for all $T\geq T_s$.
\end{lemma}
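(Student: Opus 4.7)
The plan is to convert the nonlinear iteration $z_{T+1}=p(z_T)$, where $z_T:=p^{[T]}(y)$, into an additive recurrence by taking logarithms, and then to solve the resulting two-sided linear inequality.

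First I would extract the local behaviour of $p$ at $0$. Since $\delta=\ordz p\geq 2$, the expansion of $p$ near the origin has the form $p(z)=c\,z^{\delta}(1+O(z))$ with some $c\in\mcc^{*}$. Consequently there exist a radius $0<r_0\leq 1$ and constants $0<C_1\leq C_2$, depending only on $p$, such that
$$
C_1|z|^{\delta}\leq |p(z)|\leq C_2|z|^{\delta}\qquad\text{whenever }0<|z|\leq r_0.
$$
Using~\eqref{lemma_db_p_T_convergence} together with the hypothesis $p^{[T]}(y)\ne 0$, I may pick $T_s$ so large that $0<|z_T|\leq r_0$ for every $T\geq T_s$; the above pair of bounds then applies to every iterate past $T_s$.

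Setting $u_T:=-\log|z_T|>0$, the multiplicative inequality becomes the additive recurrence
$$
\delta u_T-\log C_2\leq u_{T+1}\leq \delta u_T-\log C_1\qquad (T\geq T_s).
$$
I would then divide by $\delta^{T+1}$ and study $w_T:=u_T/\delta^T$: the recurrence gives $|w_{T+1}-w_T|\leq M\,\delta^{-(T+1)}$ with $M:=\max(|\log C_1|,|\log C_2|)$, so $(w_T)_{T\geq T_s}$ is Cauchy and converges to some limit $w_\infty\in\mrr$ satisfying $|w_T-w_\infty|\leq M\,\delta^{-T}/(\delta-1)$ for $T\geq T_s$.

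The only nontrivial point is the strict positivity of $w_\infty$. From the preceding estimate, $w_\infty\geq \delta^{-T_s}\bigl(u_{T_s}-M/(\delta-1)\bigr)$. Since $z_T\to 0$ forces $u_T\to +\infty$, enlarging $T_s$ once more allows me to arrange $u_{T_s}>M/(\delta-1)$, so that $w_\infty>0$. Putting everything together, $u_T=w_\infty\,\delta^T+O(1)$ and there exist constants $0<A\leq B$, depending only on $p$ and $y$, such that $A\,\delta^T\leq u_T\leq B\,\delta^T$ for all $T\geq T_s$. Exponentiating and setting $c_3':=e^{-B}$, $c_3'':=e^{-A}$ (both in $(0,1)$) yields the required inequality~\eqref{lemma_db_p_T_result}. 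The argument is routine once the local bound $C_1|z|^\delta\leq|p(z)|\leq C_2|z|^\delta$ is in hand; the one place where the convergence hypothesis does real work, beyond merely keeping the iterates inside the good neighbourhood of $0$, is in pushing $u_{T_s}$ above the explicit threshold $M/(\delta-1)$ that guarantees $w_\infty>0$.
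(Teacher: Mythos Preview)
Your proof is correct and follows essentially the same route as the paper's: both arguments begin from the local estimate $C_1|z|^{\delta}\le |p(z)|\le C_2|z|^{\delta}$ near $0$ (obtained by writing $p(z)=z^{\delta}g(z)$ with $g(0)\ne 0$) and then iterate it along the orbit once the iterates have entered the good neighbourhood. The paper simply says ``iterating~\eqref{lemma_db_p_T_first_bound} we find~\eqref{lemma_db_p_T_result}'' and notes that the constants lie in $(0,1)$ because of~\eqref{lemma_db_p_T_convergence}; your logarithmic rescaling $w_T=u_T/\delta^T$ and the Cauchy--limit argument make that iteration explicit and, in particular, make transparent why the constants are strictly less than $1$ (your verification that $w_\infty>0$).
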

\begin{proof}
As $p$ is a rational fraction and $z=0$ is a zero of order $\delta\geq 2$, we have that
\begin{equation} \label{lemma_db_p_T_presentation_p}
p(z)=z^{\delta}g(z),
\end{equation}
where $g$ is a rational fraction satisfying $g(0)\ne 0$. In particular, $g$ is a continuous function defined in a neighbourhood $U$ of 0. So, for a sufficiently small $U$, there exist constants $\tilde{c}_3',\tilde{c}_3''>0$ such that
\begin{equation} \label{lemma_db_p_T_db_g}
\tilde{c}_3'\leq |g(z)|\leq\tilde{c}_3''
\end{equation}
for every $z\in U$.

Combining~\eqref{lemma_db_p_T_presentation_p} and~\eqref{lemma_db_p_T_db_g}, we find
\begin{equation} \label{lemma_db_p_T_first_bound}
\tilde{c}_3'|z|^{\delta}\leq |p(z)|\leq \tilde{c}_3''|z|^{\delta}
\end{equation}

By assumption~\eqref{lemma_db_p_T_convergence}, there is an index $\hat{T}\in\mnn$ such that for every $T\geq\hat{T}$ one has $p^{[T]}(y)\in U$. So iterating~\eqref{lemma_db_p_T_first_bound} we find~\eqref{lemma_db_p_T_result} with $c_3',c_3''>0$. The upper bound $c_3',c_3''<1$ follows from just established~\eqref{lemma_db_p_T_result} and assumption~\eqref{lemma_db_p_T_convergence}.
\end{proof}

The following lemma gives an upper bound for the coefficients of the solution of~\eqref{systeme_1} in the special case when $p(z)$ is a polynomial with algebraic coefficients. This lemma  is proven in~\cite{ThTopfer1995}, in Lemma~\ref{ub_coefficients_poly} below we translate this statement to our notation.
\begin{lemma}[Lemma~12 in~\cite{ThTopfer1995}] \label{ub_coefficients_poly}
Let functions $f_1,...,f_{n+1}$ satisfy~\eqref{systeme_1} with $p(z)\in\ol{\mqq}[z]$. Then, all the numbers $\ul{D}_i\ull{f}(0)$, $i\in\mnn$, belong to a fixed number field and enjoy the following upper bound for the height:
 \begin{equation} \label{ub_hDtf}
    \exp\left(h(\ul{D}_i\ul{f}^{\Dc}(0))\right)\leq (i+2)^{c_2tD}.
 \end{equation}
\end{lemma}
\begin{proof}
Lemma~12 of~\cite{ThTopfer1995} shows that the numbers $\ul{D}_i\ull{f}(0)$ belong to a fixed number field and provides the 
claimed
upper bound for heights.

Note that one has the following relations between the notations of~\cite{ThTopfer1995} and our notations introduced in~\eqref{def_D_Dc} and in the text preceding this equality:
 \begin{equation*}
    \ul{D}_i\ull{f}(0)=\left(f_{1,i},\dots,f_{n+1,i}\right)
 \end{equation*}
 and
  \begin{equation*}
    \ul{D}_i\ull{f}^{\Dc}(0)=\left(f_{i}^{(\ul{j})}\mid\ul{j}\in\Dc\right),
 \end{equation*}
and the height in~\eqref{ub_hDtf} is the product of the denominator and the house of these vectors.
\end{proof}

From this moment on, we denote by $K$ the number field generated by coefficients of Taylor expansions of $f_1,\dots, f_{n+1}$ at the origin together with coefficients of $p(z)$, $a(z)$ and of all polynomials involved in the matrices $A(z)$ and $B(z)$ of system~\eqref{systeme_1} (in the proofs of Theorem~\ref{annexe_theo_ia1} and Theorem~\ref{annexe_theo_2}, when we consider values of $f_1(y),\dots,f_{n+1}(y)$ at an algebraic point $y$, we extend $K$ by $y\in\ol{\mqq}$ as well).

\begin{remark}
The upper bound~\eqref{ub_hDtf} implies that the series $f_k(\b{z})$, $k=1,\dots,n+1$, converge in the circle $|\b{z}|<1$.
\end{remark}
The next lemma embodies an important step in the proof of Proposition~\ref{existence_P_DT} below, which, in its turn, plays an important role in proofs of our main results, Theorems~\ref{annexe_theo_ia1} and~\ref{annexe_theo_ia2}. We isolate this step in Lemma~\ref{lemma_P7} below in order to make the reading easier.
\begin{lemma} \label{lemma_P7}
Assume the situation of Proposition~\ref{annexe_P7}, that is let $f_1,...,f_{n+1}$ be a system of $K$-functions of type $(\psi,\phi)$, and assume moreover that $\phi$ is given by
$$
\phi_{\Dc}(i)=(i+2)^{c_2tD},
$$
where $c_2$ is a constant independent of $i$ and $\Dc$.

Let the function $F$ be defined by~\eqref{annexe_P7_def_F}, for Lemma~\ref{annexe_P7} applied with $y\in\mcc$, $0<|y|<1$, $S=\{(0,y)\}$, $r'=r''=|y|$, $D_0\geq D\geq 1$ and $\Dc$ given by~\eqref{defDcanonique}. Assume moreover that functions $f_1,...,f_{n+1}$ satisfy~\eqref{systeme_1} and define
\begin{equation} \label{lemma_P7_def_q}
q:=\det A.
\end{equation}
Then for every integer $T\geq 0$ there exists a polynomial $P_{D_0,D,T}\in K[X_0,X_1,\dots,X_n]$ with coefficients from $\ol{\mqq}$, of degree in $\b{z}$ upper bounded by
\begin{equation} \label{ub_P_DT_deg_z}
\deg_z P_{D_0,D,T} \leq c_4D_0(d^T+\dots+1)\leq c_5 D_0 d^T,
\end{equation}
of degree in $\ul{X}$ upper bounded by
\begin{equation} \label{ub_P_DT_deg_X}
\deg_{\ul{X}}P_{D_0,D,T}<n D,
\end{equation}
of the length not exceeding
\begin{equation} \label{ub_P_DT_L}
L(P_{D_0,D,T})\leq\exp(c_6D_0(d^T+\log D_0)),
\end{equation}
and such that
\begin{equation} \label{annexe_defPT}
    \left(\prod_{i=0}^{T-1}q(p^{[i]}(y))\right)^{nD}\cdot F\left(p^{[T]}(y)\right)=P_{D_0,D,T}\left(y,f_1(y),\dots,f_n(y)\right).
\end{equation}
\end{lemma}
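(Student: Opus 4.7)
The plan is to iterate the functional equation~\eqref{systeme_1} so as to express $F(p^{[T]}(y))$ as a polynomial in $y, f_1(y), \dots, f_n(y)$, with $Q_T(y)^{nD} := \bigl(\prod_{i=0}^{T-1} q(p^{[i]}(y))\bigr)^{nD}$ absorbing the denominators that accumulate through the iteration. First I would rewrite~\eqref{systeme_1} as $\ull{f}(p(z)) = A(z)^{-1}(a(z)\ull{f}(z) - B(z))$ and multiply by $q(z) = \det A(z)$, obtaining for each $j \in \{1, \dots, n\}$ an identity
$$q(z) f_j(p(z)) = S_j\bigl(z, f_1(z), \dots, f_n(z)\bigr),$$
where $S_j \in \ol{\mqq}[z, X_1, \dots, X_n]$ is \emph{linear} in $X_1, \dots, X_n$ (since the adjugate of $A$ acts $\ol{\mqq}[z]$-linearly on $\ull{f}$), with $z$-degree and length bounded by constants depending only on $a, A, B$.

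An induction on $T$ then yields
$$Q_T(z)\, f_j(p^{[T]}(z)) = N_{j,T}\bigl(z, f_1(z), \dots, f_n(z)\bigr),$$
with $N_{j,T}$ again linear in $(X_1, \dots, X_n)$, $\deg_z N_{j,T} = O(d^T)$, and $L(N_{j,T}) \leq \exp(c' d^T)$. The inductive step substitutes $z \mapsto p(z)$ in the identity for $T-1$, multiplies through by $q(z)$ (which suffices because $N_{j,T-1}$ is linear in the $X$-variables), and replaces each newly-appearing $q(z) f_i(p(z))$ by $S_i(z, \ull{f}(z))$.

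With these iterated identities in hand, write (under the natural convention that the $(n{+}1)$-th K-function of Proposition~\ref{annexe_P7} is the identity, so that $f_{n+1}(z) = z$)
$$F(z) = \sum_{\ul{\alpha} \in \Dc} a_{\ul{\alpha}}\, z^{\alpha_{n+1}} \prod_{j=1}^n f_j(z)^{\alpha_j}, \quad 0 \leq \alpha_k < D.$$
Substituting $z = p^{[T]}(y)$ and multiplying through by $Q_T(y)^{nD}$ gives
$$Q_T(y)^{nD} F(p^{[T]}(y)) = \sum_{\ul{\alpha}} a_{\ul{\alpha}}\, (p^{[T]}(y))^{\alpha_{n+1}}\, Q_T(y)^{nD - \sum_j \alpha_j} \prod_{j=1}^n N_{j,T}(y, \ull{f}(y))^{\alpha_j};$$
since $nD - \sum_j \alpha_j \geq nD - n(D-1) > 0$, no negative powers of $Q_T$ remain and the right-hand side is a polynomial in $y$ and $f_1(y), \dots, f_n(y)$. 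This polynomial is the desired $P_{D,T}$; its total degree in $(X_1, \dots, X_n)$ is at most $\max_{\ul{\alpha}} \sum_j \alpha_j \leq n(D-1) < nD$, confirming~\eqref{ub_P_DT_deg_X}.

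The remaining bounds follow from the growth rates above: the $y$-degree of each summand is at most $d^T (D-1) + nD \cdot O(d^T) + n(D-1) \cdot O(d^T) = O(D d^T)$, proving~\eqref{ub_P_DT_deg_z}, and the length bound~\eqref{ub_P_DT_L} is a routine multiplicative estimate, since the sum has $O(D^{n+1})$ terms and each of the factors $L(p^{[T]})^{\alpha_{n+1}}$, $L(Q_T)^{nD}$, $L(N_{j,T})^{\alpha_j}$ is of size $\exp(O(D d^T))$. The only point requiring genuine care is the induction for $N_{j,T}$: both the $z$-degree and the coefficient sizes amplify under $z \mapsto p(z)$, but both recursions are geometric in $d$ and telescope cleanly to $O(d^T)$, so this obstacle is only bookkeeping.
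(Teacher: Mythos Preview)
Your proof is correct and rests on the same idea as the paper's: iterate the relation $\ull{f}(p(z))=A(z)^{-1}(a(z)\ull{f}(z)-B(z))$, use its linearity in $\ull{f}$ to keep the $\ul{X}$-degree fixed, and let the geometric recursion in $d$ control the $z$-degree and length.

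The only organisational difference is where the induction sits. The paper inducts directly on the assembled polynomial $P_{D,T}$: assuming~\eqref{annexe_defPT} for $T$, it substitutes $y\mapsto p(y)$, replaces each $f_j(p(y))$ via the functional equation, and multiplies by $q(y)^{nD}$ to clear denominators, obtaining $P_{D,T+1}$; the bounds~\eqref{ub_P_DT_deg_z}--\eqref{ub_P_DT_L} are then checked by a one-step recurrence. You instead iterate once and for all at the level of the individual functions, producing the linear forms $N_{j,T}$ with $Q_T(z)f_j(p^{[T]}(z))=N_{j,T}(z,\ull{f}(z))$, and only then substitute into the monomial expansion of $F$. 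Your route makes the bound $\deg_{\ul{X}}P_{D,T}\le n(D-1)<nD$ completely transparent (it is just $\sum_j\alpha_j$), whereas the paper obtains it by observing that the linear substitution preserves $\deg_{\ul{X}}$ at each step. Conversely, the paper's direct recurrence on $L(P_{D,T})$ is marginally cleaner than tracking $L(p^{[T]})$, $L(Q_T)$ and $L(N_{j,T})$ separately. Neither gain is substantial; the arguments are equivalent.
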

\begin{proof}
For $T=0$, the existence of $P_{D_0,D,0}$ readily follows from the definition of $F$, see~\eqref{annexe_P7_def_F} , where $\Dc$ is given by~\eqref{defDcanonique}.

For $T\geq 1$ we proceed with recurrence. Assume we established the existence of $P_{D_0,D,T}$ verifying~\eqref{ub_P_DT_deg_z}-\eqref{annexe_defPT}, for some indices $D_0,D,T$, and we want to prove the existence of $P_{D_0,D,T+1}$ verifying~\eqref{ub_P_DT_deg_z}-\eqref{annexe_defPT} as well. To this end, substitute $p(y)$ in place of $y$ to the equality~\eqref{annexe_defPT} for $P_{D_0,D,T}$ and apply to the right hand side the equality
\begin{equation} \label{annexe_P7_fpz_expression}
\ull{f}(p(\b{y}))=A(\b{y})^{-1}\left(a(\b{y})\ull{f}(\b{y})-B(\b{y})\right),
\end{equation}
which readily follows from~\eqref{systeme_1}. As the result, we infer 
the equality
\begin{equation} \label{lemma_P7_Q}
\left(\prod_{i=1}^{T}q(p^{[i]}(y))\right)^{nD}\cdot F\left(p^{[T+1]}(y)\right)=Q\left(y,f_1(y),\dots,f_n(y)\right),
\end{equation}
where $Q=Q(y,X_1,\dots,X_n)$ is a polynomial in $\ul{X}$ of the same degree as $P_{D_0,D,T}$, because~\eqref{annexe_P7_fpz_expression} is linear in $\ul{f}(y)$. So we have
$$
\deg_{\ul{X}} Q=\deg_{\ul{X}}P_{D_0,D,T}<n D.
$$
At the same time, $Q$ is a rational fraction in $y$, and the common denominator of its terms is a polynomial divisor of $\left(\det A\right)^{\deg_{\ul{X}}P_{D_0,D,T}}$ (because the denominator in the right hand side of~\eqref{annexe_P7_fpz_expression} is a polynomial in $y$ dividing $\det A$). Hence the denominator of the rational fraction $Q$ is a polynomial from $\mqq[y]$ dividing $\left(\det A\right)^{nD}=q^{nD}$ (we recall the notation~\eqref{lemma_P7_def_q}). So multiplying both sides of~\eqref{lemma_P7_Q} by $q^{nD}$, we find the equality~\eqref{annexe_defPT} for $T+1$. 

So, we define
\begin{multline} \label{annexe_recurrence_Pn}
    P_{D_0,D,T+1}(y,\ul{f}(y))\\=q(y)^{nD}P_{D_0,D,T}\left(p(y),A^{-1}(y)a(y)\ul{f}(y)-A^{-1}B(y)\right).
\end{multline}
The arguments above prove that with this definition the equality~\eqref{annexe_defPT} holds true for $P_{D_0,D,T+1}(y,\ul{f}(y))$. It remains us to verify  the upper bounds~\eqref{ub_P_DT_deg_z} and~\eqref{ub_P_DT_L}, that is upper bounds for the degree in $y$ and for the length of $P_{D_0,D,T+1}:=q^{nD}Q$.

By the recurrence hypothesis 
we find, with the notation $A_0(y)=q(y)a(y)A^{-1}(y)$ and $B_0(y)=q(y)A^{-1}B(y)$ and assuming the lower bound $c_4\geq\max\left(\deg_{\b{z}}A_0,\deg_{\b{z}}B_0\right)+n\deg(q)$,
\begin{equation*}
\begin{aligned}
    \deg_{\b{z}}P_{D_0,D,T+1}&\leq nD\deg q + c_4dD_0\left(d^{T}+\dots+1\right)+D_0\max\left(\deg_{\b{z}}A_0,\deg_{\b{z}}B_0\right)\\
                     &\leq c_4D_0\left(d^{T+1}+\dots+1\right).
\end{aligned}
\end{equation*}
So for $c_4=n\cdot\deg q+\max\left(\deg_{\b{z}}A_0,\deg_{\b{z}}B_0\right)$ the upper bound~\eqref{ub_P_DT_deg_z} holds true by recurrence. We readily find
\begin{equation} \label{annexe_mesure_dia_2}
    \deg_{\b{z}}P_{D_0,D,T+1} \leq c_4D_0\frac{d^{T+2}-1}{d-1}, 
\end{equation}
hence
\begin{equation} \label{annexe_mesure_dia_3}
    \deg_{\b{z}}P_{D_0,D,T+1} \leq c_4 D_0 d^{T+2}.
\end{equation}

We proceed with the proof of the upper bound for the length $L(P_{D_0,D,T+1})$. We deduce from~(\ref{annexe_recurrence_Pn}) the upper bound
\begin{equation*}
\begin{aligned}
    L(P_{D_0,D,T+1})&\leq L(P_{D_0,D,T})\cdot L(q)^{nD}\cdot L(p)^{\deg_{\b{z}}P_{D_0,D,T}}\\&\qquad\times\left(L(A_0(y))+L(B_0(y))\right)^D\\
          &\leq L(P_{D_0,D,T})\exp\left(c_6'(D+\deg_{\b{z}}P_{D_0,D,T})\right),
\end{aligned}
\end{equation*}
and taking into account~(\ref{annexe_mesure_dia_3}) we find
\begin{equation*}
    L(P_{D_0,D,T+1})\leq L(P_{D_0,D,T})\exp\left(c_6D_0d^{T}\right).
\end{equation*}
Applying the hypothesis of the recurrence we infer
\begin{equation} \label{annexe_theo_ia1_majL}
\begin{aligned}
    L(P_{D_0,D,T+1})&\leq\exp\left(c_6D_0(d^{T}+\dots+1+\log D_0)\right)
\\
&\leq \exp\left(c_6D_0(d^{T+1}+\log D_0)\right).
\end{aligned}
\end{equation}
We conclude with recurrence that~\eqref{ub_P_DT_L} holds true and
it completes the proof of the lemma.
\end{proof}
\begin{proposition} \label{existence_P_DT}
Let functions $f_1,...,f_{n}$ satisfy~\eqref{systeme_1} with $p(z)\in\ol{\mqq}[z]$, and let $y\in\mcc$ satisfies $p^{[T]}(y)\ne 0$ for all $T\in\mnn$. Assume that for  an integer $1\leq t\leq n$ we have~\eqref{intro_trdeg} and $f_1(z),\dots,f_t(z)$ are algebraically independent over $\mcc(z)$. Assume
\begin{equation}  \label{existence_P_DT_lim}
\lim_{T\rightarrow\infty}p^{[T]}(y)=0.
\end{equation}
Then for any $D_0\geq D\in\mnn$ and $T\in\mnn$ big enough there exists a polynomial $P_{D_0,D,T}(z,X_1,\dots,X_n)$ satisfying~\eqref{ub_P_DT_deg_z}, \eqref{ub_P_DT_deg_X} and \eqref{ub_P_DT_L}.
If moreover
\begin{equation} \label{annexe_defT}
    T\geq T_1(D_0, D),
\end{equation}
where
\begin{equation} \label{def_T_zero_D}
T_1(D_0, D)\eqdef\frac{\log D+\log\log D_0+\log\left(25 t(t+1) c_2\right)-\log|\log c_3''|}{\log \delta},
\end{equation}
and $c_3''$ is as in~\eqref{lemma_db_p_T_result}, the constant $c_2$ is defined by~\eqref{ub_hDtf} and $K_1$ is defined by~\eqref{intro_theoNishioka_conclusion}, then this polynomial satisfies as well
\begin{equation} \label{Q_double_estimation_statement}
    \exp(-c_7D_0D^{t}\delta^T)\leq|P_{D_0,D,T}(\ul{x})|\leq\exp(-c_8D_0D^{t}\delta^T),
\end{equation}
where
\begin{equation*}
    \ul{x}=\left(y,f_1(y),\dots,f_n(y)\right)\in\mcc^{n+1}
\end{equation*}
and $c_7, c_8\in\mrr^+$ are positive constants.
\end{proposition}
\begin{proof}
Let $\Dc\subset\mnn^{n+1}$ be defined by~\eqref{defDcanonique}.
\hidden{
Lemma~12 of~\cite{ThTopfer1995} shows that the numbers $\ul{D}_i\ull{f}(0)$ belong to a fixed number field and provides the following upper bound for heights:
 \begin{equation} \label{ub_hDtf}
    h(\ul{D}_i\ul{f}^{\Dc}(0))\leq (i+2)^{c_2tD}.
 \end{equation}
Note that one has the following relations between the notations of~\cite{ThTopfer1995} and our notations introduced in~\eqref{def_D_Dc} and in the text preceding this equality:
 \begin{equation*}
    \ul{D}_i\ull{f}(0)=\left(f_{1,i},\dots,f_{n,i}\right)
 \end{equation*}
 and
  \begin{equation*}
    \ul{D}_i\ull{f}^{\Dc}(0)=\left(f_{i}^{(\ul{j})}\mid\ul{j}\in\Dc\right),
 \end{equation*}
and the height in~\eqref{ub_hDtf} is the product of the denominator and the house of these vectors. In particular, the upper bound~\eqref{ub_hDtf} implies that the series $f_k(\b{z})$, $k=1,\dots,n+1$, converge in the circle $|\b{z}|<1$.
}

By Lemma~\ref{ub_coefficients_poly}, $\b{z}, f_1(\b{z}),\dots,f_{n}(\b{z})$ is a system of $K$-functions of type $(\psi,\phi)$ with
\begin{equation} \label{existence_P_DT_def_psi}
\psi(i)=c_1
\end{equation}
and
\begin{equation} \label{existence_P_DT_def_phi}
\phi_{\Dc}(i)=
(i+2)^{c_2tD}.
\end{equation}
Note that the exponent in the right hand side of~\eqref{existence_P_DT_def_phi} does not contain $D_0$, this is because multiplication by any power $z^N$ does not increase the size of Taylor coefficients of $\ul{f}^{\ul{\alpha}}(z)$ at $z=0$.

Also, note that~\eqref{existence_P_DT_def_psi} implies $i_{\cD}=\lfloor \card\cD/2c_1 \rfloor=\lfloor D_0D^t/2c_1 \rfloor$. 

Assumption~\eqref{existence_P_DT_lim} allows us to apply Lemma~\ref{lemma_db_p_T},  so finding, for $T$ big enough, the double bound~\eqref{lemma_db_p_T_result}. Note that in the estimates~\eqref{lemma_db_p_T_result} the constants $0<c_3',c_3''<1$ depend on $p$ and $y$ only.



Further, apply Lemma~\ref{lemma_P7} to get a sequence of polynomials $P_{D_0,D,T}$ enjoying upper bounds \eqref{ub_P_DT_deg_z}, \eqref{ub_P_DT_deg_X} and~\eqref{ub_P_DT_L} 
and verifying as well~\eqref{annexe_defPT}.
Also, recall the notation $T_0=\ordz F(\b{z})$, where $F(z)$ is defined by~\eqref{annexe_P7_def_F}, and recall that by Proposition~\ref{annexe_P7},
\begin{equation} \label{T0_geq_iD}
T_0\geq i_{\cD}=\lfloor \card\cD/2c_1 \rfloor, 
\end{equation}


It remains us to show that under assumption~\eqref{annexe_defT} the polynomial $P_{D_0,D,T}$ verifies~\eqref{Q_double_estimation_statement}. To this end, we apply part~\ref{annexe_P7_part_two} of Proposition~\ref{annexe_P7}, with the set $\Dc$ defined by~(\ref{defDcanonique}), 
$r'=r''=|p^{[T]}(y)|$, $N=1$ and $z=p^{[T]}(y)$. Condition~\eqref{P7_condition1} in this case is equivalent to
\begin{equation} \label{P7_condition1_equivalent_one}
 \frac{\log\left(\frac{r^2+\left|p^{[T]}(y)\right|^2}{2\cdot r\cdot\left|p^{[T]}(y)\right|}\right)      }{\log\phi_{\Dc}(T_0)}\geq 19,
\end{equation}
where $r$ is defined by~\eqref{P7_def_r}.

Now we are going to prove~\eqref{P7_condition1_equivalent_one}. First, note that
\begin{equation} \label{P7_LDM}
T_0\leq 2K_1 D_0D^{t}
\end{equation}
by Theorem~\ref{theoNishioka} 
(see also  Remark~\ref{rem_full_admissible} and recall our notation $T_0=\ordz F(\b{z})$). Hence, for $D$ large enough, we have
\begin{equation} \label{P_DT_ub}
19\log\phi_{\Dc}(T_0)\leq 19\log\left((T_0+2)^{c_2tD}\right)\leq 20c_2tD\left(t\log D+\log D_0\right).
\end{equation}
At the same time, definition of $r$, \eqref{P7_def_r}, implies that for $D$ large enough $r$ is as close to 1 as we need. So, taking into account~\eqref{lemma_db_p_T_result}, we find
\begin{equation} \label{P_DT_lb}
\log\left(\frac{r^2+\left|p^{[T]}(y)\right|^2}{2\cdot r\cdot\left|p^{[T]}(y)\right|}\right)\geq \frac45\delta^T\left|\log c_3''\right|,
\end{equation}
where the constant $c_3''$ is of course the same as in~\eqref{lemma_db_p_T_result}.

Now, the inequality~\eqref{P7_condition1_equivalent_one} follows by comparing~\eqref{P_DT_ub} and~\eqref{P_DT_lb} and than by using~\eqref{def_T_zero_D}.


Thus we can apply part~\ref{annexe_P7_part_two} of Theorem~\ref{annexe_P7} (taking into account as well the double bound~\eqref{lemma_db_p_T_result}) to find that
\begin{equation} \label{Q_double_estimation}
    \exp(-\tilde{c}_7D_0D^{t}\delta^T)\leq|F(p^{[T]}(y))|\leq\exp(-\tilde{c}_8D_0D^{t}\delta^T),
\end{equation}
where $\tilde{c}_7,\tilde{c}_8\in\mrr^+$ are two positive constants.

In view of~\eqref{annexe_defPT}, the polynomial $P_{D_0,D,T}$ satisfies
\begin{equation} \label{P_DT_equality}
P_{D_0,D,T}(\ul{x})=\left(\prod_{i=0}^{T-1}q(p^{[i]}(y))\right)^{nD}\cdot F(p^{[T]}(y)).
\end{equation}
At the same time, the double bound~\eqref{lemma_db_p_T_result} implies, for some constants $\tilde{c}_9,\tilde{c}_{10}>0$,
\begin{equation} \label{P_DT_auxilliary_db}
\exp(-\tilde{c}_9D_0D^{t}\delta^T)\leq\prod_{i=0}^{T-1}q(p^{[i]}(y))\leq \tilde{c}_{10}^{nD}.
\end{equation}
Combining~\eqref{Q_double_estimation}, \eqref{P_DT_equality} and~\eqref{P_DT_auxilliary_db} we find~\eqref{Q_double_estimation_statement}
and it completes the proof of the proposition.
\end{proof}

In the case when $p(z)$ is a rational function, and not a polynomial, we have only a far weaker upper bound for the height of coefficients in Taylor expansion at the origin of $f_i(z)$, $i=1,\dots,n$. Consequently, Proposition~\ref{existence_P_DT_rational_p} below, which is a counterpart of Proposition~\ref{existence_P_DT} in the case when $p(z)$ is a rational function and not just a polynomial, gives polynomials with weaker bounds for their length. Otherwise, proof of Proposition~\ref{existence_P_DT_rational_p} is analogous to the proof of Proposition~\ref{existence_P_DT}. the only thing modified is the upper bound for the coefficients of $f_i(z)$, $i=1,\dots,n$, which is given by the following lemma.
\begin{lemma}[Lemma~7 in~\cite{ThTopfer1995}] \label{ub_coefficients_rational}
Let functions $f_1,...,f_{n}$ satisfy~\eqref{systeme_1} with $p(z)\in\ol{\mqq}[z]$. Then, all the numbers $\ul{D}_i\ull{f}(0)$, $i\in\mnn$, belong to a fixed number field and enjoy the following upper bound for the height:
 \begin{equation} \label{ub_hDtf}
    \exp\left(h(\ul{D}_i\ul{f}^{\Dc}(0))\right)\leq \exp\left(c_{14}(i+tD)\right).
 \end{equation}
\end{lemma}
\begin{proof}
This is Lemma~7 in~\cite{ThTopfer1995}. The comments on the notation made in the proof of Lemma~\ref{ub_coefficients_poly} are applicable here as well.
\end{proof}

\begin{proposition} \label{existence_P_DT_rational_p}
Let functions $f_1,...,f_{n}$ satisfy~\eqref{systeme_1} with $p(z)\in\ol{\mqq}(z)$, and let $y\in\mcc$ satisfies $p^{[T]}(y)\ne 0$ for all $T\in\mnn$. Assume that for  an integer $1\leq t\leq n+1$ we have~\eqref{intro_trdeg} and $f_1(z),\dots,f_t(z)$ are algebraically independent over $\mcc(z)$. Assume
\begin{equation}  \label{existence_P_DT_lim_rational}
\lim_{T\rightarrow\infty}p^{[T]}(y)=0.
\end{equation}
Then for any $D_0\geq D\in\mnn$ and $T\in\mnn$ big enough there exists a polynomial $R_{D_0,D,T}(z,X_1,\dots,X_n)$ satisfying~\eqref{ub_P_DT_deg_z}, \eqref{ub_P_DT_deg_X} and \eqref{ub_P_DT_L}.
\begin{equation} \label{ub_P_DT_deg_z_rational}
\deg_z R_{D_0,D,T} \leq c_4D_0(d^T+\dots+1)\leq c_5 D_0 d^T,
\end{equation}
of degree in $\ul{X}$ upper bounded by
\begin{equation} \label{ub_P_DT_deg_X_rational}
\deg_{\ul{X}}R_{D_0,D,T}<n D,
\end{equation}
of the length not exceeding
\begin{equation} \label{ub_P_DT_L_rational}
L(R_{D_0,D,T})\leq\exp((D_0+1)(D'^t+d^T)),
\end{equation}

If moreover
\begin{equation} \label{annexe_defT_rational}
    T\geq T_2(D_0, D),
\end{equation}
where
\begin{equation} \label{def_T_zero_D_rational}
T_2(D_0, D)\eqdef\frac{t\log D+\log D_0+\log\left|\frac{5c_{15}}{4\log c_3''}\right|}{\log \delta},
\end{equation}
and $c_3''$ is as in~\eqref{lemma_db_p_T_result}, the constant $c_2$ is defined by~\eqref{ub_hDtf} and $K_1$ is defined by~\eqref{intro_theoNishioka_conclusion}, then this polynomial satisfies as well~\eqref{Q_double_estimation_statement}.
\hidden{
\begin{equation} \label{Q_double_estimation_statement_rational}
    \exp(-c_7D_0D^{t}\delta^T)\leq|R_{D_0,D,T}(\ul{x})|\leq\exp(-c_8D_0D^{t}\delta^T),
\end{equation}
where
\begin{equation*}
    \ul{x}=\left(y,f_1(y),\dots,f_n(y)\right)\in\mcc^{n+1}
\end{equation*}
and $c_7, c_8\in\mrr^+$ are positive constants.
}
\end{proposition}
\begin{proof}
The proof is very similar to the proof of Proposition~\ref{existence_P_DT}, the only difference is the use of Lemma~\ref{ub_coefficients_rational} in place of Lemma~\ref{ub_coefficients_poly}. Because of this, we only present the main outline of the proof here, referring the reader to the proof of Proposition~\ref{existence_P_DT} for more explanations.

So, let $\Dc\subset\mnn^{n+1}$ be defined by~\eqref{defDcanonique}.
By Lemma~\ref{ub_coefficients_rational}, $\b{z}, f_1(\b{z}),\dots,f_{n}(\b{z})$ is a system of $K$-functions of type $(\psi,\phi)$ with
\begin{equation} \label{existence_P_DT_def_psi}
\psi(i)=c_1
\end{equation}
and the function $\phi_{\Dc}(i)$ given by the right hand side of~\eqref{ub_hDtf}.

Assumption~\eqref{existence_P_DT_lim_rational} allows us to apply Lemma~\ref{lemma_db_p_T},  so finding, for $T$ big enough, the double bound~\eqref{lemma_db_p_T_result}. Note that in the estimates~\eqref{lemma_db_p_T_result} the constants $0<c_3',c_3''<1$ depend on $p$ and $y$ only.



Further, using the same procedure as described in the proof of Lemma~\ref{lemma_P7} we get a sequence of polynomials $R_{D_0,D,T}$ enjoying upper bounds \eqref{ub_P_DT_deg_z_rational}, \eqref{ub_P_DT_deg_X_rational} and~\eqref{ub_P_DT_L_rational} 
and verifying as well~\eqref{annexe_defPT}.


It remains us to show that under assumption~\eqref{annexe_defT_rational} the polynomial $P_{D_0,D,T}$ verifies~\eqref{Q_double_estimation_statement}. To this end, we apply part~\ref{annexe_P7_part_two} of Proposition~\ref{annexe_P7}, with the set $\Dc$ defined by~(\ref{defDcanonique}), 
$r'=r''=|p^{[T]}(y)|$, $N=1$ and $z=p^{[T]}(y)$. Similarly to the proof of Proposition~\ref{existence_P_DT}, condition~\eqref{P7_condition1} is equivalent to~\eqref{P7_condition1_equivalent_one}.

Now we are going to verify~\eqref{P7_condition1_equivalent_one}. First, note that by Theorem~\ref{theoNishioka} we have
\begin{equation} \label{P7_LDM_rational}
T_0\leq 2K_1 D_0D^{t}.
\end{equation}
Hence, for $D$ large enough, we have
\begin{equation} \label{P_DT_ub_rational}
19\log\phi_{\Dc}(T_0)\leq 19c_{14}\left(T_0+tD\right)\leq c_{15} D_0D'^t.
\end{equation}
At the same time, we have the lower bound~\eqref{P_DT_lb} by using the double bound~\eqref{lemma_db_p_T_result} in exactly the same way as in the proof of Proposition~\ref{existence_P_DT}. Then, the inequality~\eqref{P7_condition1_equivalent_one} follows by comparing~\eqref{P_DT_ub_rational} and~\eqref{P_DT_lb} and than by using~\eqref{def_T_zero_D_rational}.


We readily infer~\eqref{Q_double_estimation} by using part~\ref{annexe_P7_part_two} of Theorem~\ref{annexe_P7} (and taking into account as well the double bound~\eqref{lemma_db_p_T_result}).

We conclude by repeating arguments from the end of the proof of Proposition~\ref{existence_P_DT}.
First, in view of~\eqref{annexe_defPT}, the polynomial $P_{D_0,D,T}$ satisfies
\begin{equation} \label{P_DT_equality}
R_{D_0,D,T}(\ul{x})=\left(\prod_{i=0}^{T-1}q(p^{[i]}(y))\right)^{nD}\cdot F(p^{[T]}(y)).
\end{equation}
Secondly, the double bound~\eqref{lemma_db_p_T_result} implies, for some constants $\tilde{c}_9,\tilde{c}_{10}>0$,
\begin{equation} \label{P_DT_auxilliary_db}
\exp(-\tilde{c}_9D_0D^{t}\delta^T)\leq\prod_{i=0}^{T-1}q(p^{[i]}(y))\leq \tilde{c}_{10}^{nD}.
\end{equation}
Finally, combine~\eqref{Q_double_estimation}, \eqref{P_DT_equality} and~\eqref{P_DT_auxilliary_db} to find~\eqref{Q_double_estimation_statement}.
This completes the proof of the proposition.
\end{proof}

\hidden{
\begin{proposition} \label{annexe_interpolation_Topfer} (See Lemma~11 in~\cite{ThTopfer1995}) Let $T,D\in\mnn$ and $y\in\mcc$. Let $K$ be a number field. Then there exists a polynome $R_{T,D}\in K[\b{z},\ul{X}]$ satisfying
\begin{eqnarray}
  \deg_{\b{z}}R_{T,D} &\leq& C_1d^TD \label{annexe_interpolation_Topfer_b1} \\
  \deg_{\ul{X}}R_{T,D} &\leq& D \label{annexe_interpolation_Topfer_b2} \\
  h(R_{T,D}) &\leq& C_2D(d^T+D^n). \label{annexe_interpolation_Topfer_b3}
\end{eqnarray}
Moreover, if the following inequality holds true:
\begin{equation}\label{annexe_interpolation_Topfer_c1}
    \delta^T\geq C_3D^{t+1}, 
\end{equation}
then
\begin{equation} 
    \exp(-C_5D^{t+1}\delta^T)\leq|R_{T,D}(y,\ull{f}(y))|\leq\exp(-C_6D^{t+1}\delta^T). \label{annexe_interpolation_Topfer_b4}
\end{equation}
\end{proposition}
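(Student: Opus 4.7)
The plan is to adapt the construction of Proposition~\ref{existence_P_DT} to the present setting: start from an auxiliary polynomial in the variables $X_1,\dots,X_n$ alone whose associated function $F(z):=P(f_1(z),\dots,f_n(z))$ has a large vanishing order at the origin, and then iterate the functional equation~\eqref{systeme_1} exactly as in Lemma~\ref{lemma_P7} to bring in the $z$-dependence. With respect to Lemma~11 of~\cite{ThTopfer1995}, the only genuinely new feature is that the coefficients are allowed to lie in an arbitrary number field $K$; this requires only that Siegel's lemma be applied over $\mathcal{O}_K$ in place of $\mathbb{Z}$, which is a routine modification.

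First I would invoke Proposition~\ref{annexe_P7} applied to the system of $K$-functions $(f_1,\dots,f_n)$ of type $(\psi,\phi)$ with $\phi_{\Dc}(t)=(t+2)^{c_2|\Dc|}$ and $\Dc$ of the canonical form~\eqref{defDcanonique}; this produces $P\in\mathcal{O}_K[X_1,\dots,X_n]$ of degree $\leq D$ and length $\leq\exp(c_0 D^{n+1})$ such that $T_0:=\ordz F\geq c_1 D^{n+1}$. The multiplicity estimate Theorem~\ref{theoNishioka}, applied to $(z,f_1,\dots,f_n)$, yields the matching upper bound $T_0\leq K_1' D^{n+1}$. I would then run the iteration of Lemma~\ref{lemma_P7} verbatim with this $P$, producing $R_{T,D}\in K[z,X_1,\dots,X_n]$ satisfying~\eqref{annexe_interpolation_Topfer_b1}, \eqref{annexe_interpolation_Topfer_b2} together with
$$
R_{T,D}(y,\underline{f}(y)) \;=\; \Bigl(\prod_{i=0}^{T-1}(\det A)(p^{[i]}(y))\Bigr)^{nD}\,F\bigl(p^{[T]}(y)\bigr).
$$
The length bound~\eqref{annexe_interpolation_Topfer_b3} follows by tracking the height through the iteration exactly as in the derivation of~\eqref{ub_P_DT_L}; the extra $\exp(C_2 D^{n+1})$ factor compared with~\eqref{ub_P_DT_L} absorbs the length of the initial $P$, which here is furnished by Siegel's lemma rather than assumed to have very small coefficients.

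For the two-sided evaluation~\eqref{annexe_interpolation_Topfer_b4} I would combine Lemma~\ref{lemma_db_p_T}, giving $|c_3'|^{\delta^T}\leq|p^{[T]}(y)|\leq|c_3''|^{\delta^T}$, with sharp control of $|F(p^{[T]}(y))|$ near $0$ and with the bound~\eqref{P_DT_auxilliary_db} on the product $\prod(\det A)(p^{[i]}(y))$. The upper estimate is immediate: since $F$ vanishes to order $T_0\asymp D^{n+1}$ at zero and is bounded on a fixed neighbourhood of $0$, the hypothesis~\eqref{annexe_interpolation_Topfer_c1} $\delta^T\geq C_3 D^{n+1}$ forces $|F(p^{[T]}(y))|\leq\exp(-c\,D^{n+1}\delta^T)$. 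The matching lower bound is the main obstacle: it requires that $\ordz F$ does not exceed its designed value of $T_0$ by too much, so that the leading Taylor term really dominates at $p^{[T]}(y)$. The cleanest way to secure this is to apply part~\ref{annexe_P7_part_two} of Proposition~\ref{annexe_P7} directly with $r'=r''=|p^{[T]}(y)|$, $N=1$ and $z=p^{[T]}(y)$, exactly as in the proof of Proposition~\ref{existence_P_DT}; under hypothesis~\eqref{annexe_interpolation_Topfer_c1} the required inequality~\eqref{P7_condition1} is satisfied, and one recovers $|F(p^{[T]}(y))|\geq\exp(-c'\,D^{n+1}\delta^T)$. Putting the two estimates together yields~\eqref{annexe_interpolation_Topfer_b4}.
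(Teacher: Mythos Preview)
Your proposal has a genuine gap in the very first step. You build the auxiliary polynomial $P$ in the variables $X_1,\dots,X_n$ \emph{alone}, so the support $\Dc$ lies in $\mathbb{N}^n$ and $\card\Dc\asymp D^n$. Proposition~\ref{annexe_P7} then yields only $T_0\geq t_{\Dc}\asymp D^n$, not $T_0\geq c_1 D^{n+1}$ as you assert. The multiplicity estimate of Theorem~\ref{theoNishioka} still gives $T_0\leq K_1'D^{n+1}$, but now the lower and upper bounds on $T_0$ differ by a factor $D$, and the two-sided evaluation~\eqref{annexe_interpolation_Topfer_b4} collapses: your upper bound on $|F(p^{[T]}(y))|$ becomes only $\exp(-c\,D^{n}\delta^T)$, which is incompatible with the lower bound $\exp(-c'\,D^{n+1}\delta^T)$. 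The construction actually used (T\"opfer's Lemmata~8--9, and equally the paper's Proposition~\ref{existence_P_DT}) takes $P$ in the $n+1$ variables $z,X_1,\dots,X_n$, with degree $\asymp D$ in each; then $\card\Dc\asymp D^{n+1}$ and Siegel's lemma and the multiplicity estimate pin $T_0\asymp D^{n+1}$ on both sides.

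There is a second, more structural issue. You propose to run the iteration of Lemma~\ref{lemma_P7} ``verbatim'', but that lemma tacitly relies on $p$ being a polynomial: when $p=p_1/p_2$ is a genuine rational function, substituting $p(y)$ into the $z$-variable of $P_{D,T}(z,\ul{X})$ introduces the denominator $p_2(y)^{\deg_z P_{D,T}}$, which must be cleared at every step of the recursion~\eqref{annexe_recurrence_Pn}. This is precisely the extra bookkeeping carried out in~\cite{ThTopfer1995}, and it is the reason the present proposition is stated separately from Proposition~\ref{existence_P_DT} and carries the weaker height bound~\eqref{annexe_interpolation_Topfer_b3}; see the Remark following the statement. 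So neither the initial construction nor the iteration can be lifted verbatim from the polynomial case. The paper's own proof accordingly does not attempt to redo the construction but simply invokes Lemmata~8 and~9 of~\cite{ThTopfer1995} together with Theorem~\ref{theoNishioka}.
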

\begin{proof}
This result follows from Lemmata~8 and~9 in~\cite{ThTopfer1995} and from Theorem~\ref{theoNishioka}. We define polynomes $R_{T,D}$ as it is explained in~\cite{ThTopfer1995} in the paragraph following Lemma~9 (see the bottom of the page~174).
\end{proof}

\begin{remark}
Propositions~\ref{existence_P_DT} and~\ref{annexe_interpolation_Topfer} are different in two aspects. Proposition~\ref{annexe_interpolation_Topfer} is applicable to a more general situation, when $p(z)$ is a rational fraction and not only a polynomial as it is assumed in Proposition~\ref{existence_P_DT}. At the same time, Proposition~\ref{existence_P_DT} provides a much better control over the length of the polynomials. Indeed, Proposition~\ref{existence_P_DT} provides essentially the same upper bound for the length of polynomials as Proposition~\ref{annexe_interpolation_Topfer} furnishes for the height of polynomial only. It is exactly the technical origin of the natural fact that in our final results, estimates in Theorem~\ref{annexe_theo_2}, dealing with the case when $p(z)$ is a rational fraction, are worse than the estimates in Theorem~\ref{annexe_theo_ia1}, where we consider the case of a polynomial $p(z)$. It is also the reason why our methods do not allow us to provide a counterpart of Theorem~\ref{annexe_theo_ia2} with $p(z)$ a rational fraction.
\end{remark}

}


\section{Proofs of Main Theorems} \label{section_Proofs}

In this section we give proofs of our main results announced in the Introduction, Theorems~\ref{annexe_theo_ia1}, \ref{annexe_theo_ia2} and~\ref{annexe_theo_2}.

Recall that we denote by $K$ a number field containing all the coefficients of Taylor expansions of $f_1(z),\dots,f_n(z)$ at $z=0$, all the coefficients of polynomials involved in the system~\eqref{systeme_1} (that is, all the coefficients of entries of matrices $A(z)$ and $B(z)$ and all the coefficients of $p(z)$ and $a(z)$). Also we assume $y\in K$ in the proofs of Theorems~\ref{annexe_theo_ia1} and~\ref{annexe_theo_2}, that is whenever we assume $y\in\ol{\mqq}$ we automatically assume $y\in K$.

{\noindent \it Proof of Theorem~\ref{annexe_theo_ia1}.} 
Let $W\subset\mpp^n_{\mqq}$ be a variety of dimension $k<t+1-\frac{\log d}{\log\delta}$. 
\hidden{\begin{multline*}
    D_0=\Bigg[c\cdot\max\Bigg(\left(\frac{t(W)}{\log(t(W))+1}\right)^{\frac1{n+1-k}},\\ \left(d(W)(\log(t(W))+1)^{\frac{\log d}{\log\delta}-1}\right)^{\frac{1}{n+1-k-\frac{\log d}{\log\delta}}}\Bigg)\Bigg],
\end{multline*}
o\`u $c$ d\'esigne une constante suffisamment grande.}
Let $c>0$ and $c_0>0$ be sufficiently big constants, to be fixed later. Define
\begin{eqnarray}
  D_0&:=&\left\lceil c_0\frac{h(W)}{\max\left(\log h(W),2\right)}  \right\rceil, \label{annexe_theo_ia1_defD0}
\\
  D' &:=& c \deg(W)^{\frac{1}{t-k+1-\frac{\log d}{\log\delta}}}\left(\log D_0\right)^{\frac{\log d-\log\delta}{(t-k+1-\frac{\log d}{\log\delta})\log\delta}}, \label{annexe_theo_ia1_defD}
\\
  T &:=& \lceil T_1(D_0,D')\rceil. \label{annexe_theo_ia1_defT}
\end{eqnarray}
\hidden{
For each value $c>0$ we define the constant $c_0\geq 1$ to be the minimal real number $\geq 1$ ensuring 
\begin{equation} \label{annexe_theo_ia1_def_c0}
D_0\geq D'^{\frac{\log d}{\log\delta}}.
\end{equation}
Indeed, such a (finite) value $c_0\geq 1$ clearly exists, because if we substitute the definition of $D'$, \eqref{annexe_theo_ia1_defD}, into the right hand side of~\eqref{annexe_theo_ia1_def_c0}, then we find the inequality where the left hand side is simply $D_0$ and the right hand side is a power of a logarithm of $D_0$, so this inequality holds true for $D_0$ sufficiently large, which can clearly be ensured by choosing $c_0\geq 1$ to be sufficiently large.
}


We apply Proposition~\ref{existence_P_DT}, deducing the existence of polynomials $P_{D_0,D,T}$ satisfying~\eqref{ub_P_DT_deg_z}, \eqref{ub_P_DT_deg_X}, \eqref{ub_P_DT_L} and~\eqref{Q_double_estimation_statement}. We readily find that polynomials $\tilde{P}_{D_0,D,T}(X_1,\dots,X_n)\in K[X_1,\dots,X_n]$,
$$
\tilde{P}_{D_0,D,T}(X_1,\dots,X_n):=P_{D_0,D,T}(y,X_1,\dots,X_n)
$$
verify
\begin{eqnarray} \label{ub_tildeP_DT_deg_X}
\deg_{\ul{X}}\tilde{P}_{D_0,D,T}&<&n D,
\\
L(\tilde{P}_{D_0,D,T})&\leq&\exp(c_{11}D_0(d^T+\log D_0)), \label{ub_tildeP_DT_L}
\\
\exp(-c_{12}D_0D^{t}\delta^T)&\leq&|\tilde{P}_{D_0,D,T}(\ul{x})|\leq\exp(-c_{13}D_0D^{t}\delta^T),  \label{db_tildeP_DT_value}
\end{eqnarray}
where $c_{12}\geq c_{13}>0$ are some constants. 

Note that definitions~\eqref{annexe_defT} and~\eqref{annexe_theo_ia1_defT} readily imply that, for $c$ big enough, there exist constants $c_9, c_{10}>0$ such that
\begin{equation} \label{db_delta_T}
c_9 D'\log D_0\leq \delta^T\leq c_{10}D'\log D_0.
\end{equation}

The upper bound~\eqref{ub_tildeP_DT_L} together with the inequality~\eqref{annexe_defT} 
imply that for $y\in K$ the length of $P_{D_0,D,T}(y)\in K[X_1,\dots, X_n]$ is upper bounded by $\exp(c_6'''D_0d^T)$, for all $D_0,T$ defined by~\eqref{annexe_theo_ia1_defD0} and~\eqref{annexe_theo_ia1_defT} and $1\leq D\leq D'$ (where the constant $c_6'''$ depends on $y$ but is independent of $D_0$, $D'$ and $T$).

\hidden{
Let us denote $y_1=y$, and let us denote by $y_2$, ..., $y_w$ the set of conjugates of $y$ (so we assume that $y$ has degree $w$ over $\mqq$). Define
\begin{equation*}
    \tilde{P}_{D,T}(X_1,\dots,X_n)\eqdef{\rm den}(y)^{w\cdot\deg_{\b{z}}(P_{D,T})}\times\prod_{i=1}^wP_{D,T}(y_i,X_1,\dots,X_n),
\end{equation*}
this is a polynomial with integer coefficients, of degree $\leq w\cdot D$ in $X_1,\dots,X_n$ and of length
\begin{equation} \label{annexe_theo_ia1_majLtildeP}
    L(\tilde{P}_{D,T})\leq\exp\left(\tilde{c}_6Dd^T\right).
\end{equation}
}
We apply Theorem~\ref{CplM} 
to the point $\ul{x}'=\left(f_1(y),\dots,f_n(y)\right)\in\mcc^{n}$ with
\begin{equation} \label{annexe_theo_ia1_parameters}
\begin{aligned}
    &m=n,\quad 0\leq k=\dim W < t+1-\frac{\log d}{\log\delta}, 
\quad \lambda=2,
\\
&\delta_1=\dots=\delta_n=D',
    \quad\tau=\tilde{c}_6D_0d^T, \quad
    \sigma=\frac{4c_{12}}{c_{13}},\quad U=\frac12 c_{13} D_0 D'^{t}\delta^T,
\end{aligned}
\end{equation}
where $\tilde{c}_6=\max\left(c_6''',c_{13}/2\right)$.

We are going to verify hypothesis of Theorem~\ref{CplM}. First, note that inequality $U>\tau$ readily follows from~\eqref{annexe_theo_ia1_defD}, 
by choosing $c$ sufficiently large (in this case, to satisfy $c>\frac{2\tilde{c}_6c_{10}^{\frac{\log d}{\log\delta}}}{c_{13}}$).

For every real number $s$ verifying
\begin{equation} \label{annexe_restriction_s}
    2\tau<s\leq U,
\end{equation}
we define $Q_s(X_1,\dots,X_n)=\tilde{P}_{D_0,D_s,T}(X_1,\dots,X_n)$ with $D_s=\left[\left(\frac{2s}{c_{13}D_0\delta^T}\right)^{1/t}\right]$, $D_0$ and $T$ defined by~\eqref{annexe_theo_ia1_defD0} and~(\ref{annexe_theo_ia1_defT}) respectively.
The constraints~(\ref{annexe_restriction_s}) imply
\begin{equation} \label{Ds_double_ie}
    \left[\left(\frac{4\tilde{c}_6}{c_{13}}\left(\frac{d}{\delta}\right)^T\right)^{1/t}\right] \leq D_s=\left[\left(\frac{2s}{c_{13}D_0\delta^T}\right)^{1/t}\right] \leq D',
\end{equation}
so the quantity $e(s)$ of Theorem~\ref{CplM} satisfies
\begin{equation*}
    e(s)=\left[\frac{D_s}{D'}\right]+1\leq 2=\lambda.
\end{equation*}
We proceed with verification of condition~\eqref{CplM_approximation}. First, note that $\prod_{i=1}^{n+1}\left(1+|x_i|^2\right)$, where $x_i=f_i(y)$, $i=1,\dots,n$, is a constant. Also, recall that $\deg_{\ul{X}}P_{D_0,D_s,T}=Q_s<n D_s$ by~\eqref{ub_P_DT_deg_X}. So, for a constant $c'$, we have
\begin{equation} \label{annexe_degX_petit}
    \prod_{i=1}^{n+1}\left(1+|x_i|^2\right)^{\frac12\deg_{X_i}Q_{s}}\leq\exp\left(c'D_s\right).
\end{equation}

Further, bigger the constant $c_{12}$ is in the lower bound in~\eqref{db_tildeP_DT_value}, weaker is this lower bound. So we can assume, without loss of generality, that the constants $c_{12}$ is sufficiently big, for instance, we can assume $c_{12}>2c'$. 
Then, \eqref{annexe_degX_petit} implies
\begin{equation} \label{annexe_degX_petit_c_7}
    \prod_{i=1}^{n+1}\left(1+|x_i|^2\right)^{\frac12\deg_{X_i}Q_{s}}<e^{\frac12c_{12}D_s}.
\end{equation}
Condition~\eqref{CplM_approximation} readily follows, for $c>0$ sufficiently big, from~\eqref{annexe_degX_petit_c_7}, \eqref{db_tildeP_DT_value} and the remark that $\sigma/2\geq 2\geq e(s)$.

We verify with the definition of $D_0$, $D'$ and $T$ the hypothesis~\eqref{CplM_VarietyRestriction}, that is that for $c$ big enough we have
\begin{equation} \label{annexe_theo_ia1_VarietyRestriction}
    [K:\mqq]\cdot 3\lambda^{k+1}D'^k\left(D' t(W)+(k+1)\tau d(W)\right)
    \leq U/\left(n\sigma\right)^{k+1}.
\end{equation}
Indeed, by taking $c$ big enough we can ignore the constant factors $[K:\mqq]\cdot 3\lambda^{k+1}$ in the left hand side and $\left(n\sigma\right)^{k+1}$ in the right hand side. Taking this into account, than substituting parameters~\eqref{annexe_theo_ia1_parameters}, using~\eqref{db_delta_T} and unwinding definition of $t(W)$, see~\eqref{def_tV}, we find that the inequality~\eqref{annexe_theo_ia1_VarietyRestriction} boils down to the verification, for $c>0$ big enough, of the following system of inequalities:
\begin{eqnarray}
D'^{k+1}h(W)&\leq& D_0 D'^{t+1} \log D_0, \label{final_system_ie1}
\\
D'^k (n+1)\log(n+1)\deg(W) &\leq& D_0 D'^{t+1} \log D_0, \label{final_system_ie2}
\\
D_0D'^{k+\frac{\log d}{\log\delta}} \left(\log D_0\right)^{\frac{\log d}{\log\delta}} \deg(W)&\leq& D_0 D'^{t+1} \log D_0, \label{final_system_ie3}
\end{eqnarray}
where $D_0$ and $D'$ are defined by~\eqref{annexe_theo_ia1_defD0} and~\eqref{annexe_theo_ia1_defD} respectively. It is easy to verify that, for $c$ big enough, the inequality~\eqref{final_system_ie2} follows from~\eqref{final_system_ie3}. So we need to verify only inequalities~\eqref{final_system_ie1} and~\eqref{final_system_ie3}.

The inequality~\eqref{final_system_ie1} readily follows from $k<t$ and the definition~\eqref{annexe_theo_ia1_defD0}. Indeed, the definition of $D_0$ implies that for $c_0>0$ sufficiently big we have $D_0\log D_0 \geq h(W)$.

To verify~\eqref{final_system_ie3}, we rewrite it in the form
$$
\left(\log D_0\right)^{\frac{\log d}{\log\delta}-1} \deg(W)\leq D'^{t-k+1-\frac{\log d}{\log\delta}},
$$
which then readily follows by~\eqref{annexe_theo_ia1_defD}.

So, \eqref{annexe_theo_ia1_VarietyRestriction} indeed holds true for $c>0$ big enough and we can apply Theorem~\ref{CplM}.
Using this theorem, we find, for constants $C_1,C>0$,
\begin{equation*}
\begin{aligned}
    \log\Dist(\ul{x},W) &\geq -U=-C_1D_0D^{t+1}\log D_0
\\
&\geq -Ch(W)\left(\log h(W)\right)^{\frac{(t+1)\left(\frac{\log d}{\log\delta}-1\right)}{t-k+1-\frac{\log d}{\log\delta}}}\left(\deg(W)\right)^{\frac{t+1}{t-k+1-\frac{\log d}{\log\delta}}}.
\end{aligned}
\end{equation*}
It completes the proof of Theorem~\ref{annexe_theo_ia1}.\ep

{\noindent \it Proof of Theorem~\ref{annexe_theo_ia2}.} We use the same method as in the proof of Theorem~\ref{annexe_theo_ia1}. 
Because of this similarity, we omit some details in the proof below, referring the reader to the corresponding places in the proof of Theorem~\ref{annexe_theo_ia1} for some more explanations.

Let $W\subset\mpp^{n+1}_{\mqq}$ be a variety of dimension $k<n+1-2\frac{\log d}{\log\delta}$ 
and let $\varepsilon>0$ be a number such that $k<n+1-2\frac{\log d}{\log\delta}-\varepsilon$
Define
\begin{eqnarray}
  D' &:=& c  \cdot\max\left(\left(\deg(W)\right)^{\frac1{t-k+1-2\frac{\log d}{\log\delta}-\varepsilon}},h(W)^{\frac1{t-k+2-\frac{\log d}{\log\delta}-\varepsilon}}\right), \label{annexe_theo_ia1_defD_complex}
\\
  T &:=& \lceil T_1(D',D')\rceil. \label{annexe_theo_ia1_defT_complex}
\end{eqnarray}
where $c$ denotes a sufficiently big constant. We apply Proposition~\ref{existence_P_DT} to deduce, for every $D\leq D'$, the existence of polynomials $P_{D,T}$ satisfying~\eqref{ub_P_DT_deg_z}, \eqref{ub_P_DT_deg_X}, \eqref{ub_P_DT_L} and~\eqref{Q_double_estimation_statement} (with the notation $D_0:=D$).

Note that definitions~\eqref{annexe_defT} and~\eqref{annexe_theo_ia1_defT} readily imply that, for $c$ big enough, there exist constants $c_9, c_{10}>0$ such that
\begin{equation} \label{db_delta_T_complex}
c_9 D'\log D'\leq \delta^T\leq c_{10}D'\log D'.
\end{equation}

We apply Theorem~\ref{CplM} to the point $\ul{x}=\left(y,f_1(y),\dots,f_n(y)\right)\in\mcc^{n+1}$ and with the following set of parameters
\begin{multline*}
    m=n+1,\quad 0\leq k=\dim W < t+1-2\frac{\log d}{\log\delta},\quad \lambda=2,\\ \delta_1=\tau=\tilde{c}_6D'd^T,
    \delta_2=\cdots=\delta_{n+1}=D',\quad
    \sigma=4c_7/c_8,\quad U=\frac12c_8D'^{t+1}\delta^T 
\end{multline*}
where $\tilde{c}_6=\max(c_5,c_6)$.
For all real $s$ verifying
\begin{equation} \label{annexe_restriction_s_1}
    \tau\lambda<s\leq U,
\end{equation}
we define $R_s=P_{D_s,T}$ with $D_s=\left[\left(\frac{2s}{c_8\delta^T}\right)^{1/(t+1)}\right]$. 
The double bound~(\ref{annexe_restriction_s}) implies
\begin{equation} \label{annexe_theo_ia_2_Ds_double_ie_complex}
    \left[\left(\frac{4c_6}{c_8}D'\left(\frac{d}{\delta}\right)^T\right)^{\frac{1}{t+1}}\right] \leq D_s=\left[\left(\frac{2s}{c_8\delta^T}\right)^{1/(t+1)}\right] \leq D'.
\end{equation}
Hence the quantity $e(s)$ in the statement of Theorem~\ref{CplM} satisfies
\begin{equation*}
    e(s)\leq2=\lambda
\end{equation*}
and we readily infer~(\ref{CplM_approximation}) (see the proof of Theorem~\ref{annexe_theo_ia1}, in particular~\eqref{annexe_degX_petit} and~\eqref{annexe_degX_petit_c_7} for some more details).

Similarly to the proof of Theorem~\ref{annexe_theo_ia1}, verification of hypothesis~\eqref{CplM_VarietyRestriction} is equivalent, up to adjusting the constant $c>0$ in the definition of $D'$, to the following two inequalities
\begin{eqnarray}
D'^{k+\frac{\log d}{\log\delta}}\left(\log D'\right)^{\frac{\log d}{\log\delta}}h(W)&\leq& D'^{t+2}\log D', \label{final_system_ie1_complex}
\\
D'^{k+1+2\frac{\log d}{\log\delta}}\left(\log D'\right)^{2\frac{\log d}{\log\delta}} \deg(W)&\leq& D'^{t+2}\log D'. \label{final_system_ie3_complex}
\end{eqnarray}
Note that by choosing the constant $c$ in~\eqref{annexe_theo_ia1_defD_complex} sufficiently big we can ensure
$$
D'^{\varepsilon}\geq\left(\log D'\right)^{2\frac{\log d}{\log\delta}}\geq\left(\log D'\right)^{\frac{\log d}{\log\delta}-1}.
$$
Then inequalities~\eqref{final_system_ie1_complex} and~\eqref{final_system_ie3_complex} readily follow from~\eqref{annexe_theo_ia1_defD_complex}.

So, all the hypothesis of Theorem~\ref{CplM} are verified and we infer with this theorem, for a constant $C>0$,
\begin{multline*}
    \log\Dist(\ul{x},W)\geq -U=-\frac12c_8D'^{t+1}\delta^T
\\
\geq -C\max\left(\left(\deg(W)\right)^{\frac{t+2+\varepsilon}{t-k+1-2\frac{\log d}{\log\delta}-\varepsilon}},h(W)^{\frac{t+2+\varepsilon}{t-k+2-\frac{\log d}{\log\delta}-\varepsilon}}\right).
\end{multline*}
This completes the proof of Theorem~\ref{annexe_theo_ia2}. \ep

{\noindent \it Proof of Theorem~\ref{annexe_theo_2}.} We use the same method of proof as for Theorems~\ref{annexe_theo_ia1} and~\ref{annexe_theo_ia2}. The only modifications we need is to replace Proposition~\ref{existence_P_DT}  by Proposition~\ref{existence_P_DT_rational_p}, adjust the values of parameters according to bounds given by this new  proposition and verify, once again, the hypothesis of Theorem~\ref{CplM}.

So, let $W\subset\mpp^n_{\mqq}$ be a variety of dimension $k<t\left(2-\frac{\log d}{\log\delta}\right)$. 
Define
\begin{eqnarray}
  D_0&:=&\left\lceil h(W)^{1/2}  \right\rceil, \label{annexe_theo_ia1_defD0_rational}
\\
  D' &:=& c \left(\deg(W)D_0^{\left(\frac{\log d}{\log\delta}-1\right)}\right)^{\frac1{t\left(2-\frac{\log d}{\log\delta}\right)-k}}, \label{annexe_theo_ia1_defD_rational}
\\
  T &:=& \lceil T_2(D_0,D')\rceil. \label{annexe_theo_ia1_defT_rational}
\end{eqnarray}
where $c$ denotes a sufficiently big constant.

Note that definitions~\eqref{annexe_defT_rational} and~\eqref{annexe_theo_ia1_defT_rational} readily imply that, for $c$ big enough, there exist constants $c_{16}, c_{17}>0$ such that
\begin{equation} \label{db_delta_T}
c_{16}D_0 D'^t\leq \delta^T\leq c_{17}D_0 D'^t.
\end{equation}

By Proposition~\ref{existence_P_DT_rational_p}, for all $D\leq D'$  there exist polynomials $R_{D_0,D,T}\in K[\b{z},X_1,\dots,X_n]$ satisfying~\eqref{annexe_theo_ia1_defD0_rational}, \eqref{annexe_theo_ia1_defD_rational}, \eqref{annexe_theo_ia1_defT_rational} and~\eqref{Q_double_estimation_statement}. Then, the polynomials $\tilde{R}_{D_0,D,T}\in K[X_1,\dots,X_n]$ defined by
$$
\tilde{R}_{D_0,D,T}(X_1,\dots,X_n):=R_{D_0,D,T}(y,X_1,\dots,X_n).
$$
satisfy
\begin{eqnarray} \label{ub_tildeP_DT_deg_X_rational}
\deg_{\ul{X}}\tilde{R}_{D_0,D,T}&<&n D,
\\
L(\tilde{R}_{D_0,D,T})&\leq&\exp(c'_{18}D_0(D^{t}+d^T))\leq\exp(c_{18}D_0^{1+\frac{\log d}{\log\delta}}D'^{\frac{\log d}{\log\delta}t}), \label{ub_tildeP_DT_L_rational}
\\
\exp(-c_{19}D_0^2D^{t}D'^t)&\leq&|\tilde{R}_{D_0,D,T}(\ul{x})|\leq\exp(-c_{20}D_0^2D^{t}D'^t),  \label{db_tildeP_DT_value_rational}
\end{eqnarray}
where $c_{19}\geq c_{20}>0$ are some constants.

We apply Theorem~\ref{CplM} to the point $\ul{x}=\left(f_1(y),\dots,f_n(y)\right)\in\mcc^{n}$ with
\begin{multline} \label{theo_2_choice_of_parameters}
    m=n,\quad 0\leq k=\dim W < t\left(2-\frac{\log d}{\log\delta}\right),\quad \lambda=2,
\\
\tau=\tilde{c}_{18}D_0^{1+\frac{\log d}{\log\delta}}D'^{\frac{\log d}{\log\delta}t},
    \delta_1=\delta_2=\cdots=\delta_{n}=D,
\\
    \sigma=\frac{4c_{19}}{c_{20}},\quad U=\frac12c_{20}D_0^2D'^{2t},
\end{multline}
where $\tilde{c}_{18}=\max(c_{18},c_{19})$.


We readily verify the hypothesis of Theorem~\ref{CplM} for the choice of parameters~\eqref{annexe_theo_ia1_defD0_rational}, \eqref{annexe_theo_ia1_defD_rational}, \eqref{annexe_theo_ia1_defT_rational} and~\eqref{theo_2_choice_of_parameters}. So, the hypothesis $\tau\lambda<U$ straightforwardly follows, for $c>0$ sufficiently large, from~\eqref{annexe_theo_ia1_defD_rational} and~\eqref{theo_2_choice_of_parameters}.

Further, for every real $s$ verifying
\begin{equation} \label{annexe_theo_2_restriction_s_1}
    \tau\lambda<s\leq U
\end{equation}
we define $R_s(X_1,\dots,X_n)=\tilde{R}_{D_0,D_s,T}(X_1,\dots,X_n)\in K[X_1,\dots,X_n]$, where $D_s=\left[\left(\frac{2s}{c_{20}D_0^2D'^t}\right)^{1/t}\right]$.

The constraints~\eqref{annexe_theo_2_restriction_s_1} imply
\begin{equation} \label{annexe_theo_2_Ds_double_ie}
    \left[\left(\frac{4\tilde{c}_{18}}{c_{20}\left(D_0D'^t\right)^{1-\frac{\log d}{\log\delta}}}\right)^{1/t}\right] \leq D_s=\left[\left(\frac{2s}{c_{20}D_0^2D'^t}\right)^{1/t}\right] \leq D',
\end{equation}
so the quantity $e(s)$ in Theorem~\ref{CplM} satisfies
\begin{equation*}
    e(s)\leq\lambda.
\end{equation*}
Moreover, the lower bound in~\eqref{annexe_theo_2_Ds_double_ie} imply that
$$
D_0^2D_s^tD'^t\geq\frac{4\tilde{c}_{18}}{c_{20}}D_0^{1+\frac{\log d}{\log\delta}}D'^{\frac{\log d}{\log\delta}t}\geq 2 e(s)\tau.
$$
Also note that, with the choice of parameters~\eqref{theo_2_choice_of_parameters}, we clearly have $\tau>\delta_1+\dots+\delta_m$ for $c>0$ big enough.
This ensures~(\ref{CplM_approximation}) of Theorem~\ref{CplM}.

Verification of hypothesis~\eqref{CplM_VarietyRestriction} boils down to verifying, for $c>0$ large enough, the following two inequalities (see the proof of Theorem~\ref{annexe_theo_ia1} for more details on this reduction):
\begin{eqnarray}
D'^{k+1}h(W)&\leq& D_0^2 D'^{2t}, \label{final_system_ie1_rational}
\\
D_0^{\frac{\log d}{\log\delta}+1}D'^{k+\frac{\log d}{\log\delta}t} \deg(W)&\leq& D_0^2 D'^{2t}. \label{final_system_ie3_rational}
\end{eqnarray}
Than, the inequality~\eqref{final_system_ie1_rational} readily follows from~\eqref{annexe_theo_ia1_defD0_rational} and~\eqref{final_system_ie3_rational} follows from~\eqref{annexe_theo_ia1_defD_rational}.

So, all the hypothesis of Theorem~\ref{CplM} are verified and we infer with this theorem, for a constant $C>0$,
\begin{multline*}
    \log\Dist(\ul{x},W)\geq -U=-CD_0^2D'^{2t}
\\
=-Ch(W)^{1+\frac{\log d-\log\delta}{(2t-k)\log\delta-t\log d}t}\deg(W)^{\frac{2t}{t\left(2-\frac{\log d}{\log\delta}\right)-k}}.
\end{multline*}
This proves~\eqref{annexe_theo_2_result}.
\ep



\hidden{
\begin{center}%
          {\bfseries Acknowledgement\vspace{-.5em}}%
\end{center}%
The author would like to express his profound gratitude to Patrice \textsc{Philippon} for his support throughout  this research. 
}

{\small



\def\cprime{$'$} \def\cprime{$'$} \def\cprime{$'$} \def\cprime{$'$}
  \def\cprime{$'$} \def\cprime{$'$} \def\cprime{$'$} \def\cprime{$'$}
  \def\cprime{$'$} \def\cprime{$'$} \def\cprime{$'$} \def\cprime{$'$}
  \def\cprime{$'$} \def\cprime{$'$} \def\cprime{$'$} \def\cprime{$'$}
  \def\polhk#1{\setbox0=\hbox{#1}{\ooalign{\hidewidth
  \lower1.5ex\hbox{`}\hidewidth\crcr\unhbox0}}} \def\cprime{$'$}
  \def\cprime{$'$}



}

\end{document}